\newcommand{\Ind}{\ensuremath{{\mathds{1}}}} 
\newcommand{\distas}[1]{\mathbin{\overset{#1}{\kern\z\sim}}}%
\newsavebox{\mybox}\newsavebox{\mysim}
\newcommand{\distras}[1]{%
  \savebox{\mybox}{\hbox{\kern3pt$\scriptstyle#1$\kern3pt}}%
  \savebox{\mysim}{\hbox{$\sim$}}%
  \mathbin{\overset{#1}{\kern\z@\resizebox{\wd\mybox}{\ht\mysim}{$\sim$}}}%
}
\newcommand{\Prob}{\ensuremath{{\mathds P}}} 
\newcommand{\indep}{\raisebox{0.05em}{\rotatebox[origin=c]{90}{$\models$}}}
\newcommand{\R}{\ensuremath{{\mathds R}}}
\newcommand{\E}{\ensuremath{{\mathds E}}}
\newcommand{\X}{\ensuremath{{\mathcal X}}}
\newcommand{\PX}{\ensuremath{{P_X}}}
\newcommand{\pX}{\ensuremath{{p}}}
\numberwithin{equation}{section}
\theoremstyle{plain}
\theoremstyle{remark}
\newtheorem{definition}{Definition}[section]
\newtheorem{example}{Example} 
\newtheorem{remark}{Remark}[section]
\begin{document}

\begin{frontmatter}
\title{What is a good imputation under MAR missingness?}
\runtitle{MAR Imputation}

\begin{aug}
\author[A]{\fnms{Jeffrey}~\snm{Näf}\ead[label=e1]{jeffrey.naf@unige.ch}},
\author[B]{\fnms{Erwan}~\snm{Scornet}\ead[label=e2]{erwan.scornet@po-ly-tech-ni-que.edu}}
\and
\author[C]{\fnms{Julie}~\snm{Josse}\ead[label=e3]{julie.josse@inria.fr}}
\address[A]{Research Institute for Statistics and Information Science, University of Geneva\printead[presep={\ }]{e1}.}
\address[B]{Sorbonne Universite and Universite Paris Cite, \\
        CNRS, Laboratoire de Probabilites, \\
Statistique et Modelisation, F-75005 Paris, France\printead[presep={\ }]{e2}.}
\address[C]{Inria, PreMeDICaL Team, University of Montpellier\printead[presep={\ }]{e3}.}
\end{aug}

\begin{abstract}
   Missing values pose a persistent challenge in modern data science. Consequently, there is an ever-growing number of publications introducing new imputation methods in various fields. The present paper attempts to take a step back and provide a more systematic analysis. Starting from an in-depth discussion of the Missing at Random (MAR) condition for nonparametric imputation, we first investigate whether the widely used fully conditional specification (FCS) approach indeed identifies the correct conditional distributions. Based on this analysis, we propose three essential properties an ideal imputation method should meet, thus enabling a more principled evaluation of existing methods and more targeted development of new methods. In particular, we introduce a new imputation method, denoted mice-DRF, that meets two out of the three criteria. We also discuss ways to compare imputation methods, based on distributional distances. Finally, numerical experiments illustrate the points made in this discussion.
\end{abstract}

\begin{keyword}
Nonparametric imputation, missing at random, pattern-mixture models, distributional prediction
\end{keyword}

\end{frontmatter}

\section{Introduction}

In this paper, we discuss general-purpose (multiple) imputation of missing data sets: instead of imputing for a specific estimation goal or target, we focus on imputations that can be used for a wide variety of analyses in a second step. Developing such imputation methods is still an area of active research, as is benchmarking imputations. To categorize the wealth of imputation methods, one usually differentiates between joint modeling methods that impute the data using one (implicit or explicit) model and the fully conditional specification (FCS) where a different model for each dimension is trained \citep{FCS_Van_Buuren2007, VANBUUREN2018}. Joint modeling approaches may be based on parametric distributions \citep{schafer1997analysis}, or on neural networks, such as generative adversarial network (GAN) \cite{GAIN, directcompetitor1, directcompetitor2} or variational autoencoder (VAE) \cite{MIWAE,VAE1,VAE2, VAE3}. In contrast, in the FCS approach, imputation is done one variable at a time, based on conditional distributions \citep[see, e.g.,][]{sequentialapproach0noaccess, sequentialapproach1, sequentialapproach2, greatoverview}. 
The most prominent example of FCS is the multiple imputation by chained equations (MICE) methodology \citep{mice}. 
In this paper, we address three questions.

First, is imputation under MAR actually possible with the FCS approach? Formally, using the so-called pattern-mixture model (PMM, \cite{little_patternmixture}) view of MAR, we prove that the conditional distribution needed to impute a variable $X_j$ is identifiable, in the simplified setting where all other variables were already imputed correctly. Thus, imputation with the FCS approach is feasible in principle. As we will show, this result is non-trivial in the non-parametric missing data framework, due to the distribution shifts across missing patterns, a phenomenon which may occur even in MAR settings. In fact, our discussion emphasizes that the FCS approach to impute one variable at a time is what makes this identification possible and that different (joint modeling) approaches might fail to identify the correct distribution under MAR. In this context, we compare the MAR condition we consider to stronger conditions used in the literature, such as in \cite{directcompetitor1}, \cite{directcompetitor2} and \cite{RMAR_Psychometrika}, and explore their relations in detail. We also compare these MAR definitions with conditions formulated in the graphical modeling literature, for instance in \cite{nabi2020missing, nabi2025define}.

Second, what properties should an ideal (FCS) imputation method have? Despite the previous positive identification result, FCS imputation can be extremely challenging, since distributions of observed variables differ across missing patterns. 
We list three properties an ideal imputation method should meet. In short, it should be a distributional regression method, able to capture non-linear dependencies between covariates, while being robust to \emph{covariate shifts}. This also implies that MAR should be accompanied by an overlap condition that guarantees that imputation is possible. We discuss these issues and existing methods that meet some of these criteria and introduce a new method based on the Distributional Random Forest of \cite{DRF-paper}, denoted ``mice-DRF''. 

Third, how should imputation methods be evaluated? Currently, imputation methods are largely benchmarked and evaluated based on measuring the root mean squared error (RMSE) between the imputed and the underlying true values, see e.g., \cite{stekhoven_missoforest, Waljee2013, GAIN,knn_adv2,metabolomics1, knn_adv1, VAE1, VAE2, awesomebenchmarkingpaper, Dong2021} and many others. However, when comparing imputation methods, one should refrain from using measures such as the RMSE, as already pointed out \citep{VANBUUREN2018, RFimputationpaper}. Indeed, measures like RMSE favor methods that impute conditional means, instead of draws from the conditional distribution. Hence, using RMSE as a validation criterion tends to favor methods that artificially strengthen the dependence between variables and lead to severe biases in parameter estimates and uncertainty quantification. Following \cite{ImputationScores} and others, we emphasize here that imputation is a distributional prediction task and needs to be evaluated as such. Thus, we advocate the use of a distributional metric or score \citep{gneiting, EnergyDistance} between actual and imputed data sets. In particular, we focus on the energy distance \citep{EnergyDistance} which is simple to calculate and does not require to choose any tuning parameters. 

The remainder of the article is organized as follows. In Section \ref{Sec_Main_Results}, we study different MAR conditions and imputations in more detail and present our identification results. In Section \ref{Sec_GraphicalModelscomparison} we draw comparisons to the graphical modeling literature for missing values. We then use these insights to present recommendations for imputation methods, including three properties any ideal imputation method should meet in Section \ref{Sec_Implications}. Here we also discuss the evaluation of imputation methods and introduce the negative energy distance as a measure of imputation quality. Finally, we illustrate the main points of this paper in two empirical examples in Section \ref{Sec_Empirical}. Code to replicate the experiments can be found in \url{https://github.com/JeffNaef/MARimputation}.


\subsection{Related Work} 

Though the literature on missingness is vast, the results and discussions presented in this paper are new to the best of our knowledge. Most of the papers discussing MAR add the additional assumption that the distribution of $X$ (complete observations) and $M\mid X$ (distribution of the missing pattern conditional on complete observations) are parameterized by two distinct sets of parameters, leading to the classical ignorability property of \cite{Rubin_Inferenceandmissing}. This simplifies the analysis and generally avoids the issues we discuss here. For instance, while the FCS and, in particular, the MICE approach has been studied theoretically \citep{RubinLittlebook, MICE_Results, MICE_Results2} assuming this ignorability, the identification problems in this general setting appear to have been largely ignored. Instead, these papers generally focus on the challenging problem of potential incompatibility of the conditional models and analyze the convergence and asymptotic properties of the FCS iterations. Our aim is in a sense much simpler, as we want to know whether the conditional distributions are identifiable under MAR when no assumption on the parametrization is placed. An important exception is given in the recent work of \cite{directcompetitor0}. They appear to be the first to discuss the same identification result under MAR. Our result was derived independently and in the context of a more general discussion of the MAR condition for imputation. In contrast \cite{directcompetitor0} only discuss MAR briefly and then focus on binary data and the No-Self-Censoring condition, the latter being an MNAR situation that neither implies nor is implied by MAR \citep{directcompetitor0}.

Other works use the framework of pattern-mixture models focusing, for example, on the generative adversarial network (GAN) approach:
Both \cite{directcompetitor1} and  \cite{directcompetitor2} make use of the PMM view in their proofs, without explicitly mentioning this, as does the original GAIN paper of \cite{GAIN}. We essentially provide a similar identification result for the FCS or sequential approach under MAR as \cite{directcompetitor1} provide for their GAN-based approach. However, the identification results in \cite{directcompetitor1} and \cite{directcompetitor2} for GAN-based methods rely on stronger MAR conditions, as shown in Section \ref{Sec_FCSunderMAR}. Similarly, \cite{RecoveringProbabilityDistributionsfrommissing} claims the full distribution is recoverable under MAR, but uses a conditional independence condition that is much stronger than the MAR condition we consider. Indeed, graph-based papers concerned with recoverability usually assume variables that are always observed and formulate MAR as conditional independence statements, see e.g \cite{MARthroughconditionalindependence, RMAR_Psychometrika}.\footnote{An interesting recent exception of a weaker MAR definition is given in \cite{nabi2025define}, as we discuss in Section \ref{Sec_GraphicalModelscomparison}.} Following \cite{RMAR_Psychometrika}, we will refer to this condition as RMAR in this paper and emphasize that it is much stronger than the traditional MAR condition of \cite{Rubin_Inferenceandmissing}. Our analysis shows that the variable-by-variable approach of FCS is crucial in identifying the true distribution under MAR.
To the best of our knowledge, we are also the first to propose a list of properties that an imputation method in the FCS framework should have, based on a thorough analysis of the MAR condition. This list complements existing guidelines on general imputation methods with a different focus, see, e.g., \cite[Section 4]{greatoverview}. Finally, when considering the evaluation of imputation methods, we propose to use the energy distance. The idea of using a distributional distance was motivated by \cite{ImputationScores}. Independently, \cite{Naturepaper} propose a similar approach but using the sliced Wasserstein distance \citep{slicedWasserstein}. Their procedure is designed for high-dimensional data and rather complicated as it involves randomly partitioning the data and projecting to the real numbers multiple times. In contrast, we simply propose to calculate the energy distance between the imputed and real data.


\subsection{Notation and Motivating Example}\label{Sec_Notation}


We assume an underlying probability space $(\Omega, \mathcal{A}, \Prob)$ on which all random elements are defined. Throughout, we take $\mathcal{P}$ to be a collection of probability measures on $\R^d \times \{0,1\}^d$, dominated by some $\sigma$-finite measure $\mu$. We denote the true (unobserved) complete data distribution by $P \in \mathcal{P}$, with density $p$. We assume that $n$ i.i.d. samples $(X_i,M_i)_{i=1}^n$, distributed as $(X,M)$, are generated, where $X \in \mathds{R}^d$ corresponds to the complete data and $M \in \{0, 1\}^d$ the associated missing pattern (or missing mask). 
Accordingly, we only observe the masked observations $X_i^*$, where $X_{i,j}^*=\textrm{NA}$ for $M_{i,j}=1$ and $X_{i,j}^*=X_{i,j}$ for $M_{i,j}=0$. For instance, if the complete observation $X_1 = (X_{1,1}, X_{1,2}, X_{1,3})$ is associated to the missing pattern $M_1=(1,0,0)$, then we observe $X_1^* = (\textrm{NA}, X_{1,2},X_{1,3})$, where the first component is missing and the other two are observed. The marginal distribution of $M$ is completely determined by $\Prob(M=m)$ for all $m \in \{0,1\}^d$. We denote by $\X=\{x: \pX(x) > 0 \}$ the support of $X$ and by $\mathcal{M} = \{m \in \{0,1\}^d : \Prob(M=m) > 0 \}$ the support of $M$.

For all $m \in \mathcal{M}$, we also consider the distribution of $X$ in pattern $m$ denoted by $P_{X \mid M=m}$ with density $p(x \mid M=m)$. We also denote the marginal distribution of $X$ as $P_X$ with density $p(x)$.

For any $m \in \{0,1\}^d$ and any $x \in \mathds{R}^d$, we let 
$o(x,m)$ be the subvector of $x$ indexed by the components $j$ such that $m_j=0$ (i.e. the observed components of $x$ if $m$ is the missing pattern associated to $x$). Similarly, we denote by $o^c(x,m)$ 
the subvector of $x$ indexed by the components $j$ such that $m_j=1$ (i.e. the missing components of $x$ if $m$ is the missing pattern associated to $x$). Accordingly, we define $p(o^c(x,m') \mid o(X,m') = o(x,m'), M=m)$, the conditional density of $o^c(X,m')$ conditional on the components $o(X,m')=o(x,m')$ and on the missing pattern $M=m$. Moreover, for all $m, m' \in \mathcal{M}$, we denote the support of $P_{o(X,m) \mid M=m'}$ as $\X_{\mid m'}^{m}$, i.e. $\X_{\mid m'}^{m}=\{x \in \X: p(o(x,m) \mid M=m') > 0\}$.\footnote{To avoid measurability issues, we assume that $p/\pX$ are such that $x \in \X$ implies $\pX(o(x,m)) > 0$ and $x \notin \X_{\mid m'}^{m}$ implies $p(o(x,m) \mid M=m')=0$ for all $m,m'$.} For a set $\mathcal{M}_0 \subset \mathcal{M}$, we also denote $\X_{\mid \mathcal{M}_0}^{m}= \bigcup_{m' \in \mathcal{M}_0} \X_{\mid m'}^{m}$. Finally, regarding scoring imputations, we will take $\| \cdot\|_{2}$ to be the Euclidean distance on $\R^d$ and write expectations as $\E_{\substack{Y \sim H\\X \sim \PX}}[ \| X-Y \|_{2}]$, to clarify over which distributions the expectation is taken. 
Table \ref{Table:Summarization} summarizes the notation.

The following example illustrates the notations and motivates our upcoming discussion.

\begin{example}\label{Example1_first}\label{Example1}

Consider
\begin{align}\label{eq:Example1}
\mathcal{M}= \left\{ (0, 0, 0),  (0, 1, 0),  (1, 0, 0) \right \},
\end{align}
and assume that $X = (X_1, X_2, X_3)$ is uniformly distributed on $\X=[0,1]^3$. Let 
\begin{align*}
    \Prob(M=m_1 \mid X=x) &=  2x_1/3,\\
    \Prob(M=m_2 \mid X=x) &=  2/3-2x_1/3,\\
    \Prob(M=m_3 \mid X=x) &=  1/3.
\end{align*}
Consider now the task of imputing $X_1$. To accomplish this, we might want to learn $X_1 \mid X_{2}, X_3$ in pattern $m_1$ to impute in pattern $m_3$, thus it might make sense to compare the distributions of $X_1 \mid X_{2}, X_3, M=m_1$ and $X_1 \mid X_{2}, X_3, M=m_3$. First it holds that 
\begin{align*}
    &\pX(x_1 \mid X_2=x_2, X_3=x_3, M=m_3)\\
    &=\pX(x_1 \mid X_2=x_2, X_3=x_3).
\end{align*}
In fact, we will see below that this condition characterizes MAR, (\ref{PMMMAR} in Section \ref{subsec:Mardefinitions}). On the other hand,
\begin{align*}
    &p(x_1\mid X_2=x_2,X_3=x_3,  M=m_1)\\
    &=2x_1 \pX(x_1 \mid X_2=x_2, X_3=x_3)\\
    &\neq \pX(x_1 \mid X_2=x_2, X_3=x_3).
\end{align*}
This shows that $\pX(x_1 \mid X_2=x_2, X_3=x_3)$ needed for imputation, is not identifiable from pattern $m_1$. The same argument shows that $\pX(x_1 \mid X_2=x_2, X_3=x_3)$ is also not identifiable from pattern $m_2$. Figure \ref{fig:Example1} illustrates this behavior: It shows the distribution of $X_1$ in different patterns. As the distribution of $(X_2,X_3)$ in the different patterns is always the same, this directly illustrates the change in the conditional distribution of $X_1 \mid X_2, X_3$ when changing from pattern $m_1$ to pattern $m_3$. This example thus questions  whether nonparametric imputation under MAR is possible.
\end{example}

\begin{figure}
    \centering
    \includegraphics[width=1\linewidth]{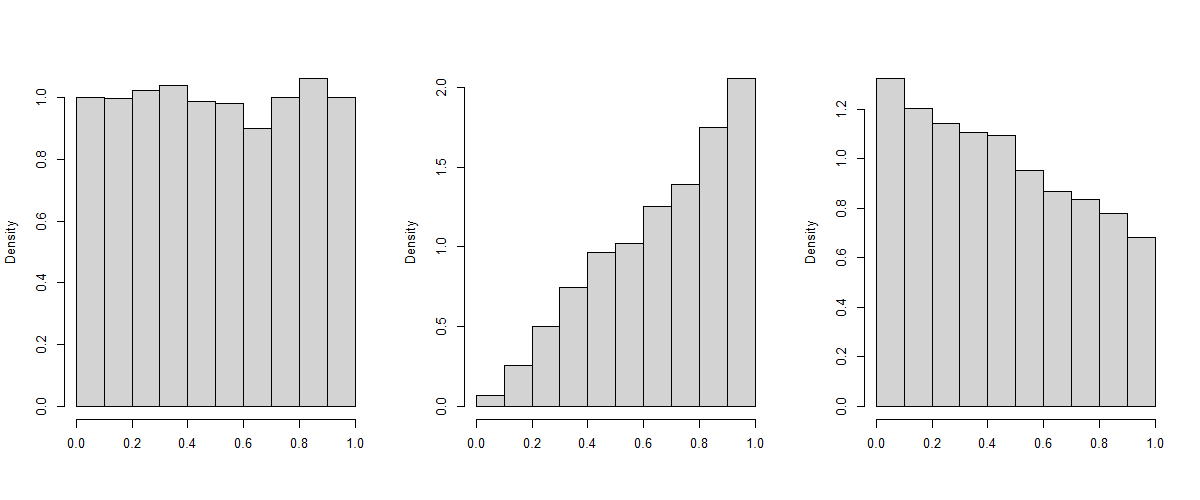}
    \caption{Illustration of Example \ref{Example1_first}. Left: Distribution we would like to impute $X_1 \mid M=m_3$. Middle: Distribution of $X_1$ in the fully observed pattern $(X_1 \mid M=m_1)$. Right: Distribution of $X_1$ in the second pattern $(X_1 \mid M=m_2)$.} 
    \label{fig:Example1}
\end{figure}

\begin{table*}[htbp]
\centering
\caption{Notation Summary for Missing Data Framework}
\label{Table:Summarization}
\begin{tabular}{cl}
\hline
\textbf{Symbol} & \textbf{Description} \\
\hline
$(X, M)$ & Complete data $X$ and missingness pattern $M$\\
$M_j$ & $j$th coordinate of $M$: $M_j = 1$ (masked), $M_j = 0$ (observed) \\
$P$ (resp. $p(x,m)$) & True unobserved joint distribution (resp. density) of $(X,M)$ \\
$(X_i, M_i)_{i=1}^n$ & $n$ samples from $P$ \\
$X_i^*$ & Masked observation: $X_{i,j}^* = \textrm{NA}$ if $M_{i,j} = 1$, $X_{i,j}^* = X_{i,j}$ if $M_{i,j} = 0$ \\
$P_X$ (resp. $p(x)$) & Marginal distribution (resp. density) of $X$ \\
$\mathbb{P}(M = m)$ & Marginal distribution of missingness pattern $M$ \\
$ \X$ (resp. $\mathcal{M}$) & Support of $X$ (resp. $M$)\\
$P_{X|M=m}$ (resp. $p(x \mid M = m)$) & Distribution (resp. density) of $X$ in pattern $m$ \\
$\mathbb{P}(M = m \mid X=x)$ & Probability of pattern $m$ given $x$ \\
$o^c(X, m')$ (resp. $o(X, m')$) & Missing (resp. Observed) components of $X$ under pattern $m'$\\
$p(o^c(x, m') \mid o(X, m')=o(x, m'), M = m)$ & Conditional density of missing part wrt to $m'$ given observed part wrt to $m'$ in pattern $m$ \\
$P_{o(X,m) \mid M=m'}$ & Distribution of observed part with respect to pattern $m$ in pattern $m'$ \\
$ \X_{\mid m'}^{m}$ & Support of $P_{o(X,m) \mid M=m'}$: $\{x \in  \X : p(o(x,m) \mid M = m') > 0\}$ \\
$ \X_{\mid \mathcal{M}_0}^{m}$ & Union of supports: $\bigcup_{m' \in \mathcal{M}_0}  \X_{\mid m'}^{m}$ for $\mathcal{M}_0 \subset \mathcal{M}$ \\
\hline
\end{tabular}
\end{table*}

\section{Sequential Imputation under MAR}\label{Sec_Main_Results}


In this section, we present different definitions of MAR used in the literature, with a focus on imputation. We first study the exact relations between all these definitions, as summarized in Figure~\ref{fig_relation_between_MAR_assumptions}. We then show that if the number of missing values of a pattern $m$ is larger than one, identifying the imputation distribution directly from other patterns is generally not possible under the common MAR definition, which we will formally introduce as \ref{PMMMAR} below. 
However, in Section \ref{Sec_FCSunderMAR}, we show that identifying this imputation distribution is theoretically possible for \ref{PMMMAR} if one variable at a time is imputed, as is done in the FCS approach. Finally, we discuss the issue of unconditional distribution shifts and introduce an overlap condition. Figures \ref{fig_relation_between_MAR_assumptions}, \ref{fig:scoreillustrationOj} and Table \ref{tab:missing_data} each summarize a key takeaway from this section.

\subsection{MAR Definitions}
\label{subsec:Mardefinitions}

In this section, we analyze several different MAR conditions. We present two different settings, the selection model (SM) and the pattern-mixture model (PMM), each one leading to a different set of MAR assumptions.

\paragraph*{Selection Model.} In the \emph{selection model} framework \citep{little_patternmixture}, the joint distribution of $X$ and $M$ is factored as $p(x,m)= \Prob(M=m \mid X=x)   \pX(x)$. 
In this setting, MAR is defined as follows.

\begin{definition}[SM-MAR]    The missingness mechanism is missing at random (MAR) if, for all $m \in \mathcal{M}$, and all $x, \tilde{x} \in \X$ such that $o(x,m)=o(\tilde{x},m)$, we have
    \begin{align}
&\Prob(M=m | X=x) = \Prob(M=m| X=\tilde{x}). \tag{SM-MAR} \label{SMAR}
\end{align}
\end{definition}

\ref{SMAR} is sometimes referred to as ``Always Missing at Random'' \citep[see, e.g.,][]{whatismeant3, directcompetitor1}. A widely used alternative definition of MAR (see e.g., \cite{ourresult, little2019statistical}) is the following.
\begin{definition}[SM-MAR II]
    The missingness mechanism is missing at random (MAR) if,  for all $m \in \mathcal{M}, x\in  \X$, we have
\begin{align}
\Prob(M=m \mid X=x) = \Prob(M=m \mid o(X,m)= o(x,m)). \tag{SM-MAR II} \label{SMARII} 
\end{align}
\end{definition}
 Note that $o(x,m)$ is different for each $m$, and thus neither \ref{SMAR} nor \ref{SMARII} are statements about conditional independence between $M$ and $X$, as remarked in \cite{whatismeant3}. However, as observed by a referee, \ref{SMARII} might be formulated as $\Ind\{M=m\} \indep ~X \mid o(X,m)$ for all $m \in \mathcal{M}$. As such, \ref{SMARII} is intuitive: for any value of $m$, the probability of occurrence of missing pattern $m$  only depends on the observed part of $x$. We can verify that these two definitions are equivalent. 

\begin{restatable}{lemma}{PMMMARzero}\label{amazinglemma}
Condition \ref{SMAR} is equivalent to \ref{SMARII}.
\end{restatable}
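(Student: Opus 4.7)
The plan is to prove the two implications separately, using that under both definitions the conditional probability $\Prob(M=m\mid X=x)$ depends on $x$ only through the observed block $o(x,m)$.

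First, the implication \ref{SMARII} $\Rightarrow$ \ref{SMAR} is essentially immediate. Fix $m \in \mathcal{M}$ and $x, \tilde x \in \mathcal{X}$ with $o(x,m)=o(\tilde x,m)$. Applying \ref{SMARII} to both $x$ and $\tilde x$ gives
\begin{equation*}
\Prob(M=m\mid X=x) = \Prob(M=m\mid o(X,m)=o(x,m)) = \Prob(M=m\mid o(X,m)=o(\tilde x,m)) = \Prob(M=m\mid X=\tilde x),
\end{equation*}
which is \ref{SMAR}.

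For the converse \ref{SMAR} $\Rightarrow$ \ref{SMARII}, the key observation is that \ref{SMAR} asserts that the map $x \mapsto \Prob(M=m\mid X=x)$ is constant on the level sets of $x \mapsto o(x,m)$. Hence there exists a measurable function $f_m$ such that for all $x \in \mathcal{X}$,
\begin{equation*}
\Prob(M=m\mid X=x) = f_m(o(x,m)).
\end{equation*}
Then I would apply the tower property of conditional expectation, conditioning first on $X$ and then on $o(X,m)$:
\begin{align*}
\Prob(M=m\mid o(X,m)=o(x,m)) &= \E\bigl[\Prob(M=m\mid X)\,\bigm|\,o(X,m)=o(x,m)\bigr] \\
&= \E\bigl[f_m(o(X,m))\,\bigm|\,o(X,m)=o(x,m)\bigr] \\
&= f_m(o(x,m)) = \Prob(M=m\mid X=x),
\end{align*}
which is \ref{SMARII}. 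The footnote assumption on $p/\pX$ ensures that for $x \in \mathcal{X}$ the conditioning event $\{o(X,m)=o(x,m)\}$ has positive density, so the conditional probabilities are well defined.

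The only subtle point — and the one I would highlight in the write-up — is the existence of the function $f_m$. Strictly, \ref{SMAR} only constrains the conditional probability on the support $\mathcal{X}$, so $f_m$ is only defined on the image of $\mathcal{X}$ under $o(\cdot,m)$; this is precisely the set on which the conditional distribution $P_{o(X,m)}$ places its mass, so this restriction is harmless for the tower argument. No hard analysis is required beyond careful bookkeeping of the support.
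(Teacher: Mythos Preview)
Your proof is correct and is more direct than the paper's. The paper does not prove the equivalence \ref{SMAR} $\Leftrightarrow$ \ref{SMARII} head-on; instead it establishes \ref{SMAR} $\Leftrightarrow$ \ref{PMMMAR} by manipulating the density ratio $p(x\mid M=m)/p(x)$ and integrating out $o^c(x,m)$, and then invokes the external result of \cite{ourresult} for \ref{SMARII} $\Leftrightarrow$ \ref{PMMMAR}, obtaining the lemma by transitivity. Your argument via the tower property --- factoring $\Prob(M=m\mid X=x)$ through $o(x,m)$ and then averaging --- is self-contained and avoids both the detour through \ref{PMMMAR} and the external citation. The paper's route has the side benefit of proving Proposition~\ref{amazinglemma0} in the same stroke, but for the lemma alone your approach is cleaner.
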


\paragraph*{Pattern Mixture Model.} We now turn to the  \emph{pattern-mixture model} (PMM) framework \citep{little_patternmixture}, which is based on the following decomposition $
p(x,m)=p(x\mid M=m) \Prob(M=m)$. The PMM view emphasizes that the conditional distribution of the complete vector $X \mid M=m$ may vary across $m \in \mathcal{M}$. Consequently, learning a distribution of a given pattern $m$ based on another pattern $m'$ can be challenging, as the two distributions may differ drastically. 
A typical example is the Gaussian pattern-mixture model, where  $X \mid M=m \sim N(\mu_m , \Sigma_m),$ so that the distribution in each pattern might follow a different Gaussian distribution. In the PMM setting, \cite{ourresult} proposed the following definition. 

\begin{definition}[PMM-MAR]
    The missingness me-chanism is missing at random (MAR) if, for all $m \in \mathcal{M}$ and $x \in \X_{\mid m}^{m} $, 
\begin{align}
&p(o^c(x ,m ) \mid o(X,m)=o(x ,m ), M =m ) \tag{PMM-MAR}\label{PMMMAR} \\
&= \pX(o^c(x ,m )\mid o(X,m)=o(x ,m )). \nonumber 
\end{align}
\end{definition}

Thus the conditional distribution of the missing part given the observed part $o^c(X ,m ) \mid o(X ,m )$ in pattern $m$ is equal to the conditional distribution, when no information about the pattern is available. 


\begin{restatable}{proposition}{PMMMAR}[\cite{ourresult}]\label{amazinglemma0}
Condition \ref{SMARII} is equivalent to \ref{PMMMAR}.
\end{restatable}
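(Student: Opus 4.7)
The plan is to relate the two definitions via Bayes' rule applied to the joint density $p(x,m)$, exploiting the fact that both definitions are about the same joint distribution decomposed differently (SM decomposes as $\Prob(M=m \mid X=x) \pX(x)$, while PMM decomposes as $p(x \mid M=m) \Prob(M=m)$). The key algebraic identity is that for any $m \in \mathcal{M}$ and $x \in \X$ with $p(o(x,m)) > 0$,
\[
\frac{p(x \mid M=m)}{\pX(x)} = \frac{\Prob(M=m \mid X=x)}{\Prob(M=m)},
\]
so both MAR conditions can be translated into statements about this ratio.

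The first step is to factor the numerator and denominator on the left above into ``observed'' and ``missing'' parts relative to $m$. Writing $\pX(x) = \pX(o^c(x,m) \mid o(X,m)=o(x,m))\, \pX(o(x,m))$ and the analogous factorization for $p(x \mid M=m)$, one obtains
\[
\frac{p(o^c(x,m) \mid o(X,m)=o(x,m), M=m)}{\pX(o^c(x,m) \mid o(X,m)=o(x,m))} = \frac{p(o(x,m) \mid M=m) / p(o(x,m))}{1} \cdot \frac{\Prob(M=m)}{\Prob(M=m)} \cdot \frac{\Prob(M=m \mid X=x)}{\Prob(M=m \mid o(X,m)=o(x,m))},
\]
after using Bayes' rule once more on $p(o(x,m) \mid M=m)$ to introduce $\Prob(M=m \mid o(X,m)=o(x,m))$. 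The second step is simply reading off the equivalence: the left-hand ratio equals $1$ (which is exactly \ref{PMMMAR}) if and only if the rightmost ratio equals $1$ (which is exactly \ref{SMARII}), since the middle factors cancel and simplify to $1$.

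The main obstacle, and the part requiring care rather than cleverness, is handling the supports and possible zero-denominators. Condition \ref{PMMMAR} is only asserted for $x \in \X_{\mid m}^m$, i.e.\ where $p(o(x,m) \mid M=m) > 0$, which by the footnote in Section~\ref{Sec_Notation} coincides with the set where both $\pX(o(x,m)) > 0$ and $\Prob(M=m \mid o(X,m)=o(x,m)) > 0$; I would check that on this set all the Bayes' rule manipulations above are legitimate, and that elsewhere both conditions are vacuous or hold $\mu$-a.e. by convention. Once this bookkeeping is in place the equivalence is immediate in both directions.

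Finally, I would present the argument as a short chain of equalities with one line of justification each (factorization of the joint, Bayes' rule, cancellation), and remark that the proof makes the symmetry between the SM and PMM viewpoints explicit: \ref{SMARII} is the statement that the pattern probability only depends on the observed coordinates, while \ref{PMMMAR} is the statement that the conditional law of the missing given the observed coordinates does not depend on the pattern, and these are literally the same statement after dividing $p(x,m)$ by the appropriate marginal.
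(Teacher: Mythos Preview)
Your approach is correct and is essentially the route the paper takes. In the main text the paper simply remarks that the equivalence ``follows immediately from the fact that \ref{SMARII} is equivalent to $\Ind\{M=m\} \indep X \mid o(X,m)$ for all $m \in \mathcal{M}$'', which is exactly the Bayes identity you isolate: for $x \in \X_{\mid m}^m$,
\[
\frac{p(o^c(x,m)\mid o(X,m)=o(x,m),M=m)}{\pX(o^c(x,m)\mid o(X,m)=o(x,m))}
=\frac{\Prob(M=m\mid X=x)}{\Prob(M=m\mid o(X,m)=o(x,m))},
\]
so the left side equals $1$ (\ref{PMMMAR}) iff the right side equals $1$ (\ref{SMARII}). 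The appendix proof is slightly more roundabout: it first shows \ref{SMAR} $\Leftrightarrow$ \ref{PMMMAR} by the same chain of equalities followed by an integration step, handles the boundary case $x\in\X\setminus\X_{\mid m}^m$ separately, and then cites the original reference for \ref{SMARII} $\Leftrightarrow$ \ref{PMMMAR}. Your direct version is cleaner for this particular proposition.

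One cosmetic issue: in your displayed equation the ``middle factors'' do not literally cancel as written. The factor $p(o(x,m)\mid M=m)/\pX(o(x,m))$ equals $\Prob(M=m\mid o(X,m)=o(x,m))/\Prob(M=m)$ by Bayes, not $1$, so it is this term (not $\Prob(M=m)/\Prob(M=m)$) that cancels against the denominator of the last ratio. When you write it up, just present the clean identity above in one line rather than carrying the intermediate factors.
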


\Cref{amazinglemma0} follows immediately from the fact that \ref{SMARII} is equivalent to $\Ind\{M=m\} \indep X \mid o(X,m)$ for all $m \in \mathcal{M}$.
A stronger, but more interpretable condition, than \ref{PMMMAR} is the conditionally independent MAR (CIMAR), introduced in \cite{directcompetitor1}. 
\begin{definition}
    The missingness mechanism is conditionally independent MAR (CIMAR) if, for all $m, m' \in \mathcal{M}$ and $x \in \X_{\mid m'}^{m}$, we have
    \begin{align}
&p(o^c(x ,m ) \mid o(X ,m )=o(x ,m ), M =m' )\nonumber \\
&= \pX(o^c(x ,m )\mid o(X ,m )=o(x ,m )) \tag{CIMAR} \label{CIMAR}.  
\end{align}
\end{definition}
Contrary to all previous assumptions, \ref{CIMAR} is a conditional independence statement involving $X$ and $M$, as opposed to the indicator functions $\Ind\{M=m\}$, namely that $o^c(X,m) \mid o(X,m)$ is independent of $M$: the distribution of $o^c(X,m) \mid o(X,m)$ remains the same, for all missing patterns $M = m'$. Thus, \ref{CIMAR} allows to learn the distribution of $o^c(X,m) \mid o(X,m)$ from any pattern $m'$ \citep[see][for an application]{ImputationScores}. \ref{CIMAR} might seem unreasonably strong compared to \ref{PMMMAR}. However, we note that it is actually the PMM equivalent to the following prominent condition. Assume $X=(X_{O^c}, X_{O})$, where $O$ is the set of indices of fully observed variables:
\begin{align}\label{Odef}
    O=\{j: m_j = 0 \text{ for all } m \in \mathcal{M} \},
\end{align}
and that for all $x \in  \X$, $m \in \mathcal{M}$:
\begin{align}\label{RMAR}
    \Prob(M=m \mid X=x)=\Prob(M=m \mid X_O= x_{O})\tag{RMAR}.
\end{align}
This is called ``Realistic MAR'' (RMAR) by \cite{RMAR_Psychometrika}, ``MAR+'' by \cite{RMAR2006} and simply ``MAR'' by \cite{Mohan2013, mohan2021graphical}. We note that in this definition we may allow $\Prob(M=m \mid X=x)$ to depend on a subset of $X_{O}$. \Cref{CIMARequivRMAR} below shows that if the missingness mechanism is \ref{RMAR} and there exists a set of variables that is always observed, it is also \ref{CIMAR} and vice-versa.

\ref{CIMAR} in turn is also weaker than MCAR, which requires that  for all  $m \in \mathcal{M}, m' \in \mathcal{M}, x \in  \X$, 
    \begin{align}\label{MCARform}
     p(x \mid M=m ) = p(x \mid M=m' ) 
     \tag{PMM-MCAR}.
\end{align}


\begin{figure*}[h!]
    \centering
\begin{tikzpicture}

\node at (0,0) {$\mathbf{X} = \begin{pmatrix}
x_{1,1} & x_{1,2} & x_{1,3} \\
x_{2,1} & x_{2,2} & x_{2,3} \\
x_{3,1} & x_{3,1} & x_{3,3}
\end{pmatrix}$};

\node at (4,0) {$\mathbf{M} = \begin{pmatrix}
0 & 0 & 0 \\
1 & 0 & 0 \\
1 & 1 & 0
\end{pmatrix}$};

\node at (8,0) {$\mathbf{X}^* = \begin{pmatrix}
x_{1,1} & x_{1,2} & x_{1,3} \\
\textrm{NA} & x_{2,2} & x_{2,3} \\
\textrm{NA} & \textrm{NA} & x_{3,3}
\end{pmatrix}$};

\end{tikzpicture}
    \caption{$\mathbf{X}$ is the assumed underlying full data, $\mathbf{M}$ is the vector of missing indicators and $\mathbf{X}^*$ arises when $\mathbf{M}$ is applied to $\mathbf{X}$. Thus each row of $\mathbf{X}$ (or $\mathbf{X}^*$) is an observation under a different pattern. Under condition \ref{CIMAR}, the distribution of $X_1, X_2 \mid X_3$ is not allowed to change when moving from one pattern to another, though the marginal distribution of $X_3$ is allowed to change. In contrast, under MCAR \eqref{MCARform}, no change is allowed. Under MAR \eqref{PMMMAR}, the only constraint is that the distribution of $X_1, X_2 \mid X_3$ in the third pattern is the same as the unconditional one.}
    \label{fig:illustrationfocond}
\end{figure*}

Figure \ref{fig:illustrationfocond} illustrates these different conditions in a small example.

Another interesting MAR condition is the extended MAR condition, introduced in \cite{directcompetitor1}. For this condition, we need to assume that the fully observed pattern is part of $\mathcal{M}$, which we write as $0 \in \mathcal{M}$.
\begin{definition}
Assume $0 \in \mathcal{M}$. The missingness mechanism is Extended Missing At Random (EMAR), if, \ref{PMMMAR} holds and for $x \in \X_{\mid 0}^{m}$, we have
    \begin{align} 
    &p(o^c(x ,m ) \mid o(X ,m )=o(x ,m ), M =0 ) \nonumber \\
    &= \pX(o^c(x ,m )\mid o(X ,m )=o(x ,m ))\tag{EMAR} \label{EMAR}.
\end{align}
\end{definition}
\ref{EMAR} can be rewritten as \begin{align}
& p(o^c(x ,m ) \mid o(X,m)=o(x ,m ), M =m ) \nonumber \\
= & ~p(o^c(x ,m )\mid o(X,m)=o(x ,m ), M =0 ).  \nonumber  
\end{align}
Clearly, \ref{CIMAR} implies \ref{EMAR} and Example \ref{Example6} in Appendix \ref{Sec_Proofs} demonstrates that \ref{EMAR} is strictly weaker than \ref{CIMAR}. On the other hand, Example \ref{Example1_first} above or Example \ref{interesting_new_Example} below show that \ref{EMAR} is strictly stronger than \ref{PMMMAR}. Thus we have the following.

\begin{restatable}{proposition}{Ordering}\label{Ordering}
    Assume $0 \in \mathcal{M}$. Then \ref{CIMAR} is strictly stronger than \ref{EMAR} which is strictly stronger than \ref{PMMMAR}.
\end{restatable}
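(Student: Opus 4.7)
The plan is to establish both implications in the chain directly from the definitions by specializing the double quantifier in CIMAR, and then to invoke the explicit examples already in the paper to witness strictness. No step is conceptually hard; the proposition is essentially an organizational lemma that places the three MAR conditions into a hierarchy, and the only real work is unpacking definitions.

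For CIMAR $\Rightarrow$ EMAR, I would use that CIMAR requires, for every $m, m' \in \mathcal{M}$ and every $x \in \X_{\mid m'}^m$,
\[
p(o^c(x,m) \mid o(X,m) = o(x,m), M = m') = \pX(o^c(x,m) \mid o(X,m) = o(x,m)).
\]
Specializing $m' = m$ yields exactly the PMM-MAR equality on $\X_{\mid m}^m$, and specializing $m' = 0$ (which is valid because $0 \in \mathcal{M}$ by assumption) yields the extra identity that EMAR imposes on $\X_{\mid 0}^m$. Together these are precisely the two clauses in the definition of EMAR, so CIMAR implies EMAR.

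The implication EMAR $\Rightarrow$ PMM-MAR is immediate, since EMAR is defined as PMM-MAR conjoined with an extra equality for $M = 0$, and PMM-MAR is one of its clauses verbatim.

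To close the proposition, I would invoke Example \ref{Example6} in Appendix \ref{Sec_Proofs}, which exhibits an EMAR distribution failing CIMAR, and Example \ref{Example1_first}, which exhibits a PMM-MAR distribution failing EMAR: there, PMM-MAR for the missing pattern $m_3 = (1,0,0)$ holds by the computation $p(x_1 \mid x_2, x_3, M = m_3) = p(x_1 \mid x_2, x_3)$, whereas the fully observed pattern $m_1 = 0$ gives $p(x_1 \mid x_2, x_3, M = 0) = 2 x_1 \, p(x_1 \mid x_2, x_3)$, which violates the second EMAR equality. The only point requiring care is that the specializations $m' = m$ and $m' = 0$ produce supports matching those demanded by PMM-MAR and the second EMAR clause; this holds by construction, since CIMAR is universally quantified over all $x \in \X_{\mid m'}^m$. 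The main obstacle, if any, is simply keeping the subscripts on the various supports $\X_{\mid m'}^m$ straight throughout the specialization.
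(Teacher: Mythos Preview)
Your proposal is correct and follows essentially the same approach as the paper: the paper's proof simply states that the two implications hold ``clearly'' and then cites Examples \ref{Example1_first} (or \ref{interesting_new_Example}) and \ref{Example6} for strictness, while you spell out the specializations $m'=m$ and $m'=0$ that make the implications hold and invoke the same examples. Your version is just a more explicit rendering of what the paper leaves implicit.
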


Again Proposition \ref{Ordering} might be derived from the fact that we can characterize \ref{RMAR}/\ref{CIMAR} as $M \indep ~ X \mid X_O $   and \ref{EMAR} as for all $m$,$(\Ind\{M=m\}, \Ind\{M=0\}) \indep ~ X \mid o(X,m)$.
We can also establish the following relation between \ref{MCARform}, \ref{CIMAR}, and \ref{RMAR}.

\begin{restatable}{lemma}{CIMARequivRMAR}\label{CIMARequivRMAR}
   Assume $O$ in \eqref{Odef} is not empty. Then \ref{CIMAR} is equivalent to \ref{RMAR} and both are strictly weaker than \ref{MCARform}. If in addition, $0 \in \mathcal{M}$, \ref{EMAR} is equivalent to the following two conditions: \ref{SMARII} and for all $x \in \X$,
   \begin{align}\label{RMARforEMAR}
       \Prob(M=0 \mid X=x)=\Prob(M=0 \mid X_O=x_O).
   \end{align}
   If $O$ is empty, \ref{CIMAR} and \ref{RMAR} simplify to \ref{MCARform}, while $\Prob(M=0 \mid X=x)=\Prob(M=0)$ for \ref{EMAR}.
\end{restatable}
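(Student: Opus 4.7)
The plan is to split the statement into three pieces: (i) the equivalence \ref{CIMAR} $\Leftrightarrow$ \ref{RMAR} when $O$ is nonempty, together with the strict implication from \ref{MCARform}; (ii) the characterization of \ref{EMAR} via \ref{SMARII} and \eqref{RMARforEMAR} when $0\in\mathcal M$; and (iii) the degenerate case $O=\emptyset$. Part (iii) will essentially fall out of the other two by substituting $x_O$ for the empty vector.

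For (i), I would first translate \ref{CIMAR} into a selection-model statement. Writing $V(m)=\{j:m_j=0\}$ and $U(m)=\{j:m_j=1\}$, a direct Bayes manipulation shows that the identity
\begin{equation*}
p(x_{U(m)}\mid x_{V(m)},M=m')=p(x_{U(m)}\mid x_{V(m)})
\end{equation*}
is equivalent to $\Prob(M=m'\mid X=x)=\Prob(M=m'\mid X_{V(m)}=x_{V(m)})$, which says $\Prob(M=m'\mid X=x)$ is a function of $x_{V(m)}$ only. Running this for every $m\in\mathcal M$, the conditional probability cannot depend on any coordinate in $\bigcup_{m\in\mathcal M} U(m)=O^c$, hence is a function of $x_O$, which is precisely \ref{RMAR}. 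Conversely, assuming \ref{RMAR} the joint density factors as $p(x,M=m')=p(x)\,g(x_O)$ and, since $O\subseteq V(m)$ for every $m$, the factor $g(x_O)$ cancels in the ratio that defines $p(x_{U(m)}\mid x_{V(m)},M=m')$, recovering \ref{CIMAR}. The strict implication \ref{MCARform}$\Rightarrow$\ref{CIMAR} is trivial in one direction; for strictness I would exhibit a minimal example, e.g.\ two independent uniforms with $\mathcal M=\{(0,0),(0,1)\}$ and $\Prob(M=(0,1)\mid X=x)=x_1$, which satisfies \ref{RMAR} but not \ref{MCARform}.

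For (ii), \Cref{amazinglemma0} already gives \ref{PMMMAR}$\Leftrightarrow$\ref{SMARII}, so I only need to show that, in the presence of \ref{PMMMAR}, the additional clause in \ref{EMAR} is equivalent to \eqref{RMARforEMAR}. Applying the same Bayes manipulation as in step (i) but with $m'=0$, the extra clause
\begin{equation*}
p(o^c(x,m)\mid o(X,m)=o(x,m),M=0)=\pX(o^c(x,m)\mid o(X,m)=o(x,m))
\end{equation*}
is equivalent to $\Prob(M=0\mid X=x)=\Prob(M=0\mid X_{V(m)}=x_{V(m)})$ for every $m\in\mathcal M$. The intersection-of-observed-sets argument from step (i) applies verbatim and yields $\Prob(M=0\mid X=x)=\Prob(M=0\mid X_O=x_O)$, which is \eqref{RMARforEMAR}. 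Part (iii) is then immediate: if $O=\emptyset$ then $x_O$ is the empty vector, so conditioning on $X_O$ is trivial and both \ref{RMAR} and \eqref{RMARforEMAR} collapse to the stated marginal statements.

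The main obstacle, and the step I would spell out most carefully, is the \emph{intersection lemma} used twice above: if a function $h(x)$ equals a function of $x_{V(m)}$ for each $m\in\mathcal M$, then $h$ is a function of $x_{\bigcap_m V(m)}=x_O$. Pointwise this is elementary, but in the density/conditional-probability setting one has to be a little careful with null sets; this is where the support restrictions $x\in\X^m_{\mid m'}$ and the footnote on measurability in the notation section will be invoked to guarantee the Bayes manipulations are valid on the relevant support. Once that lemma is in hand, all three parts of the statement follow from direct rearrangements of conditional densities.
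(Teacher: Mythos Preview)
Your proposal is correct and follows essentially the same route as the paper: both translate the PMM-type conditions into selection-model statements via the Bayes identity $p(o^c(x,m)\mid o(x,m),M=m')=\pX(o^c(x,m)\mid o(x,m))\Leftrightarrow \Prob(M=m'\mid x)=\Prob(M=m'\mid o(x,m))$, and then use the intersection-of-observed-sets argument to reduce dependence to $x_O$. The only cosmetic differences are that you isolate the ``intersection lemma'' explicitly (the paper folds it into the prose), and for strictness you propose your own two-dimensional example rather than citing Example~\ref{Example2} as the paper does---either works.
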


\Cref{fig_relation_between_MAR_assumptions} summarizes the different implications between the above MAR assumptions.

\begin{figure}[h!]
    \centering

\tikzset{every picture/.style={line width=0.75pt}} 

\scalebox{0.8}{\begin{tikzpicture}[x=0.75pt,y=0.75pt,yscale=-1,xscale=1]

\draw   (121,162) .. controls (121,152.61) and (153.46,145) .. (193.5,145) .. controls (233.54,145) and (266,152.61) .. (266,162) .. controls (266,171.39) and (233.54,179) .. (193.5,179) .. controls (153.46,179) and (121,171.39) .. (121,162) -- cycle ;
\draw   (339,163) .. controls (339,153.61) and (371.46,146) .. (411.5,146) .. controls (451.54,146) and (484,153.61) .. (484,163) .. controls (484,172.39) and (451.54,180) .. (411.5,180) .. controls (371.46,180) and (339,172.39) .. (339,163) -- cycle ;
\draw    (266,156) -- (337,156.97) ;
\draw [shift={(339,157)}, rotate = 180.78] [color={rgb, 255:red, 0; green, 0; blue, 0 }  ][line width=0.75]    (10.93,-3.29) .. controls (6.95,-1.4) and (3.31,-0.3) .. (0,0) .. controls (3.31,0.3) and (6.95,1.4) .. (10.93,3.29)   ;
\draw    (338,168) -- (267,167.03) ;
\draw [shift={(265,167)}, rotate = 0.78] [color={rgb, 255:red, 0; green, 0; blue, 0 }  ][line width=0.75]    (10.93,-3.29) .. controls (6.95,-1.4) and (3.31,-0.3) .. (0,0) .. controls (3.31,0.3) and (6.95,1.4) .. (10.93,3.29)   ;
\draw   (117,240) .. controls (117,230.61) and (149.46,223) .. (189.5,223) .. controls (229.54,223) and (262,230.61) .. (262,240) .. controls (262,249.39) and (229.54,257) .. (189.5,257) .. controls (149.46,257) and (117,249.39) .. (117,240) -- cycle ;
\draw   (118,315) .. controls (118,305.61) and (150.46,298) .. (190.5,298) .. controls (230.54,298) and (263,305.61) .. (263,315) .. controls (263,324.39) and (230.54,332) .. (190.5,332) .. controls (150.46,332) and (118,324.39) .. (118,315) -- cycle ;
\draw    (173,223) -- (173,182) ;
\draw [shift={(173,180)}, rotate = 90] [color={rgb, 255:red, 0; green, 0; blue, 0 }  ][line width=0.75]    (10.93,-3.29) .. controls (6.95,-1.4) and (3.31,-0.3) .. (0,0) .. controls (3.31,0.3) and (6.95,1.4) .. (10.93,3.29)   ;
\draw    (174,300) -- (174,259) ;
\draw [shift={(174,257)}, rotate = 90] [color={rgb, 255:red, 0; green, 0; blue, 0 }  ][line width=0.75]    (10.93,-3.29) .. controls (6.95,-1.4) and (3.31,-0.3) .. (0,0) .. controls (3.31,0.3) and (6.95,1.4) .. (10.93,3.29)   ;
\draw    (201,178) -- (201.95,220) ;
\draw [shift={(202,222)}, rotate = 268.7] [color={rgb, 255:red, 0; green, 0; blue, 0 }  ][line width=0.75]    (10.93,-3.29) .. controls (6.95,-1.4) and (3.31,-0.3) .. (0,0) .. controls (3.31,0.3) and (6.95,1.4) .. (10.93,3.29)   ;
\draw    (203,256) -- (203.95,298) ;
\draw [shift={(204,300)}, rotate = 268.7] [color={rgb, 255:red, 0; green, 0; blue, 0 }  ][line width=0.75]    (10.93,-3.29) .. controls (6.95,-1.4) and (3.31,-0.3) .. (0,0) .. controls (3.31,0.3) and (6.95,1.4) .. (10.93,3.29)   ;
\draw   (188,200) .. controls (188,194.75) and (194.04,190.5) .. (201.5,190.5) .. controls (208.96,190.5) and (215,194.75) .. (215,200) .. controls (215,205.25) and (208.96,209.5) .. (201.5,209.5) .. controls (194.04,209.5) and (188,205.25) .. (188,200) -- cycle ; \draw   (191.95,193.28) -- (211.05,206.72) ; \draw   (211.05,193.28) -- (191.95,206.72) ;
\draw   (190,278) .. controls (190,272.75) and (196.04,268.5) .. (203.5,268.5) .. controls (210.96,268.5) and (217,272.75) .. (217,278) .. controls (217,283.25) and (210.96,287.5) .. (203.5,287.5) .. controls (196.04,287.5) and (190,283.25) .. (190,278) -- cycle ; \draw   (193.95,271.28) -- (213.05,284.72) ; \draw   (213.05,271.28) -- (193.95,284.72) ;
\draw   (118,392) .. controls (118,382.61) and (150.46,375) .. (190.5,375) .. controls (230.54,375) and (263,382.61) .. (263,392) .. controls (263,401.39) and (230.54,409) .. (190.5,409) .. controls (150.46,409) and (118,401.39) .. (118,392) -- cycle ;

\draw   (341,240) .. controls (341,230.61) and (373.46,223) .. (413.5,223) .. controls (453.54,223) and (486,230.61) .. (486,240) .. controls (486,249.39) and (453.54,257) .. (413.5,257) .. controls (373.46,257) and (341,249.39) .. (341,240) -- cycle ;

\draw    (400,180) -- (400,221) ;
\draw [shift={(400,223)}, rotate = 270] [color={rgb, 255:red, 0; green, 0; blue, 0 }  ][line width=0.75]    (10.93,-3.29) .. controls (6.95,-1.4) and (3.31,-0.3) .. (0,0) .. controls (3.31,0.3) and (6.95,1.4) .. (10.93,3.29)   ;

\draw    (430,222) -- (430,182) ;
\draw [shift={(430,180)}, rotate = 90] [color={rgb, 255:red, 0; green, 0; blue, 0 }  ][line width=0.75]    (10.93,-3.29) .. controls (6.95,-1.4) and (3.31,-0.3) .. (0,0) .. controls (3.31,0.3) and (6.95,1.4) .. (10.93,3.29)   ;
\draw   (416.74,200.01) .. controls (416.74,194.76) and (422.78,190.51) .. (430.24,190.51) .. controls (437.69,190.51) and (443.74,194.76) .. (443.74,200.01) .. controls (443.74,205.25) and (437.69,209.51) .. (430.24,209.51) .. controls (422.78,209.51) and (416.74,205.25) .. (416.74,200.01) -- cycle ; \draw   (420.69,193.29) -- (439.78,206.72) ; \draw   (439.78,193.29) -- (420.69,206.72) ;

\draw   (341,316) .. controls (341,306.61) and (373.46,299) .. (413.5,299) .. controls (453.54,299) and (486,306.61) .. (486,316) .. controls (486,325.39) and (453.54,333) .. (413.5,333) .. controls (373.46,333) and (341,325.39) .. (341,316) -- cycle ;

\draw    (400,257) -- (400,298) ;
\draw [shift={(400,300)}, rotate = 270] [color={rgb, 255:red, 0; green, 0; blue, 0 }  ][line width=0.75]    (10.93,-3.29) .. controls (6.95,-1.4) and (3.31,-0.3) .. (0,0) .. controls (3.31,0.3) and (6.95,1.4) .. (10.93,3.29)   ;

\draw    (430,299) -- (430,258) ;
\draw [shift={(430,256)}, rotate = 90] [color={rgb, 255:red, 0; green, 0; blue, 0 }  ][line width=0.75]    (10.93,-3.29) .. controls (6.95,-1.4) and (3.31,-0.3) .. (0,0) .. controls (3.31,0.3) and (6.95,1.4) .. (10.93,3.29)   ;
\draw   (416.74,278) .. controls (416.74,272.75) and (422.78,268.5) .. (430.24,268.5) .. controls (437.69,268.5) and (443.74,272.75) .. (443.74,278) .. controls (443.74,283.25) and (437.69,287.5) .. (430.24,287.5) .. controls (422.78,287.5) and (416.74,283.25) .. (416.74,278) -- cycle ; \draw   (420.69,271.28) -- (439.78,284.72) ; \draw   (439.78,271.28) -- (420.69,284.72) ;

\draw   (338,392) .. controls (338,382.61) and (370.46,375) .. (410.5,375) .. controls (450.54,375) and (483,382.61) .. (483,392) .. controls (483,401.39) and (450.54,409) .. (410.5,409) .. controls (370.46,409) and (338,401.39) .. (338,392) -- cycle ;

\draw    (400,333) -- (400,374) ;
\draw [shift={(400,376)}, rotate = 270] [color={rgb, 255:red, 0; green, 0; blue, 0 }  ][line width=0.75]    (10.93,-3.29) .. controls (6.95,-1.4) and (3.31,-0.3) .. (0,0) .. controls (3.31,0.3) and (6.95,1.4) .. (10.93,3.29)   ;

\draw    (430,374) -- (430,334) ;
\draw [shift={(430,332)}, rotate = 90] [color={rgb, 255:red, 0; green, 0; blue, 0 }  ][line width=0.75]    (10.93,-3.29) .. controls (6.95,-1.4) and (3.31,-0.3) .. (0,0) .. controls (3.31,0.3) and (6.95,1.4) .. (10.93,3.29)   ;
\draw   (416.74,354) .. controls (416.74,348.75) and (422.78,344.5) .. (430.24,344.5) .. controls (437.69,344.5) and (443.74,348.75) .. (443.74,354) .. controls (443.74,359.25) and (437.69,363.5) .. (430.24,363.5) .. controls (422.78,363.5) and (416.74,359.25) .. (416.74,354) -- cycle ; \draw   (420.69,347.28) -- (439.78,360.72) ; \draw   (439.78,347.28) -- (420.69,360.72) ;

\draw    (263,386) -- (334,386.97) ;
\draw [shift={(336,387)}, rotate = 180.78] [color={rgb, 255:red, 0; green, 0; blue, 0 }  ][line width=0.75]    (10.93,-3.29) .. controls (6.95,-1.4) and (3.31,-0.3) .. (0,0) .. controls (3.31,0.3) and (6.95,1.4) .. (10.93,3.29)   ;
\draw    (337,398) -- (266,397.03) ;
\draw [shift={(264,397)}, rotate = 0.78] [color={rgb, 255:red, 0; green, 0; blue, 0 }  ][line width=0.75]    (10.93,-3.29) .. controls (6.95,-1.4) and (3.31,-0.3) .. (0,0) .. controls (3.31,0.3) and (6.95,1.4) .. (10.93,3.29)   ;
\draw    (175,374) -- (175,333) ;
\draw [shift={(175,331)}, rotate = 90] [color={rgb, 255:red, 0; green, 0; blue, 0 }  ][line width=0.75]    (10.93,-3.29) .. controls (6.95,-1.4) and (3.31,-0.3) .. (0,0) .. controls (3.31,0.3) and (6.95,1.4) .. (10.93,3.29)   ;
\draw    (202,331) -- (202.48,352.01) -- (202.95,373) ;
\draw [shift={(203,375)}, rotate = 268.7] [color={rgb, 255:red, 0; green, 0; blue, 0 }  ][line width=0.75]    (10.93,-3.29) .. controls (6.95,-1.4) and (3.31,-0.3) .. (0,0) .. controls (3.31,0.3) and (6.95,1.4) .. (10.93,3.29)   ;
\draw   (188.74,351.01) .. controls (188.74,345.76) and (194.78,341.51) .. (202.24,341.51) .. controls (209.69,341.51) and (215.74,345.76) .. (215.74,351.01) .. controls (215.74,356.25) and (209.69,360.51) .. (202.24,360.51) .. controls (194.78,360.51) and (188.74,356.25) .. (188.74,351.01) -- cycle ; \draw   (192.69,344.29) -- (211.78,357.72) ; \draw   (211.78,344.29) -- (192.69,357.72) ;
\draw    (267,310) -- (338,310.97) ;
\draw [shift={(340,311)}, rotate = 180.78] [color={rgb, 255:red, 0; green, 0; blue, 0 }  ][line width=0.75]    (10.93,-3.29) .. controls (6.95,-1.4) and (3.31,-0.3) .. (0,0) .. controls (3.31,0.3) and (6.95,1.4) .. (10.93,3.29)   ;
\draw    (339,322) -- (268,321.03) ;
\draw [shift={(266,321)}, rotate = 0.78] [color={rgb, 255:red, 0; green, 0; blue, 0 }  ][line width=0.75]    (10.93,-3.29) .. controls (6.95,-1.4) and (3.31,-0.3) .. (0,0) .. controls (3.31,0.3) and (6.95,1.4) .. (10.93,3.29)   ;

\draw    (262,234) -- (339,234.97) ;
\draw [shift={(341,235)}, rotate = 180.78] [color={rgb, 255:red, 0; green, 0; blue, 0 }  ][line width=0.75]    (10.93,-3.29) .. controls (6.95,-1.4) and (3.31,-0.3) .. (0,0) .. controls (3.31,0.3) and (6.95,1.4) .. (10.93,3.29)   ;

\draw    (340,246) -- (263,245.03) ;
\draw [shift={(261,245)}, rotate = 0.78] [color={rgb, 255:red, 0; green, 0; blue, 0 }  ][line width=0.75]    (10.93,-3.29) .. controls (6.95,-1.4) and (3.31,-0.3) .. (0,0) .. controls (3.31,0.3) and (6.95,1.4) .. (10.93,3.29)   ;

\draw (149,154) node [anchor=north west][inner sep=0.75pt]   [align=left] { \ \ \ref{PMMMAR}};
\draw (378,155) node [anchor=north west][inner sep=0.75pt]   [align=left] {\ref{SMAR}};
\draw (164,232) node [anchor=north west][inner sep=0.75pt]   [align=left] {\ref{EMAR}};
\draw (167,306) node [anchor=north west][inner sep=0.75pt]   [align=left] {\ref{CIMAR}};
\draw (222,191) node [anchor=north west][inner sep=0.75pt]   [align=left] {Example \ref{Example1}};
\draw (222,270) node [anchor=north west][inner sep=0.75pt]   [align=left] {Example \ref{Example6}};
\draw (152,384) node [anchor=north west][inner sep=0.75pt]   [align=left] {\ref{MCARform}};
\draw (373,384) node [anchor=north west][inner sep=0.75pt]   [align=left] {SM-MCAR};
\draw (392,306) node [anchor=north west][inner sep=0.75pt]   [align=left] {\ref{RMAR}};
\draw (263,330) node [anchor=north west][inner sep=0.75pt]   [align=left] {Lemma \ref{CIMARequivRMAR}};
\draw (263,250) node [anchor=north west][inner sep=0.75pt]   [align=left] {Lemma \ref{CIMARequivRMAR}};

\draw (263,170) node [anchor=north west][inner sep=0.75pt]   [align=left] {Proposition \ref{amazinglemma0}};

\draw (365,232) node [anchor=north west][inner sep=0.75pt]   [align=left] {\ref{PMMMAR}+\eqref{RMARforEMAR}};

\end{tikzpicture}}

\caption{Relationships between the MAR conditions discussed in this paper. An arrow from condition $A$ to condition $B$, encodes that $A$ implies $B$. The definitions are given in Section \ref{subsec:Mardefinitions}.}
\label{fig_relation_between_MAR_assumptions}
\end{figure}

In the presence of missing data, one may resort to imputation, to approximately recover a sample from $\PX$. To impute correctly, one needs to determine the distribution of $o^c(X,m) \mid o(X,m) $, which can be used to impute the missing components $o^c(X,m) \mid M=m$. Clearly under \ref{CIMAR} $o^c(X,m) \mid o(X,m) $ can in principle be determined from any other pattern $m'$, while under \ref{EMAR} this is possible for $m'=0$. On the other hand, it appears not immediately clear what is needed to identify the true conditional distribution under \ref{PMMMAR}. We formalize these insights in the next section.

\subsection{Identifiability under MAR}\label{Sec_FCSunderMAR}

A crucial property of all three MAR definitions (PMM-MAR, CIMAR, EMAR) presented in Section \ref{subsec:Mardefinitions} is that 
\begin{align*}
    &p(o^c(x,m) \mid o(X,m)=o(x,m), M=m)\\
    &=\pX(o^c(x,m) \mid o(X,m)=o(x,m)).
\end{align*}
Thus to impute pattern $m$ successfully, one needs to learn $\pX(o^c(x,m) \mid o(X,m)=o(x,m))$. We are now able to summarize the three different MAR definitions in one result about the ability to identify $\pX(o^c(x,m) \mid o(X,m)=o(x,m))$. We say $\pX(o^c(x,m) \mid o(X,m)=o(x,m))$ is identifiable from a set $\mathcal{M}_0 \subset \mathcal{M}$ of missing patterns, if there exists $(w_{m'}(o(x,m)))_{m' \in \mathcal{M}_0}$, with $\sum_{m' \in \mathcal{M}_0} w_{m'}(o(x,m))=1$, such that the mixture
\begin{align}\label{h0star}
    &h^*(o^c(x,m) \mid o(X,m)=o(x,m))  \\
    &= \sum_{m' \in \mathcal{M}_0} w_{m'}(o(x,m)) \nonumber \\
    & p(o^c(x,m) \mid o(X,m)=o(x,m), M=m'), \nonumber 
\end{align}
satisfies,  for all $x \in \X_{\mid \mathcal{M}_0}^{m}$\footnote{We take the convention that $w_{m'}(o(x,m))=0$, whenever $p(o(x,m)\mid M=m')=0$.}, 
\begin{align}
&\pX(o^c(x,m) \mid o(X,m)=o(x,m)) \nonumber \\
=& ~h^*(o^c(x,m) \mid o(X,m)=o(x,m)). \nonumber    
\end{align} In particular, we say that $\pX(o^c(x,m) \mid o(X,m)=o(x,m))$ is identifiable from a pattern $m' \in \mathcal{M}$, if for all $x \in \X_{\mid m'}^{m}$ 
\begin{align}
& p(o^c(x,m) \mid o(X,m)=o(x,m), M=m') \nonumber \\
=& ~\pX(o^c(x,m) \mid o(X,m)=o(x,m)). \nonumber     
\end{align}
Define in the following
\begin{align*}
    L_{m}=\{m' \in \mathcal{M}: m'_j=0 \text{ for all } j \text{ such that } m_j=1 \}. 
\end{align*}
Thus $L_m$ is the set of patterns $m'$ for which $o^c(x,m) $ is observed.


\begin{restatable}{proposition}{propidentifiabilitymar}\label{prop_identifiability_mar}
   Assume $0 \in \mathcal{M}$ and that $|\mathcal{M}| > 3$. Then for any pattern $m \in \mathcal{M}$, $\pX(o^c(x,m) \mid o(X,m)=o(x,m))$ is  
    \begin{itemize}
        \item[-] identifiable from any other pattern $m' \in L_m$ under \ref{CIMAR},
        \item[-] identifiable from the pattern of fully observed data, $0 \in L_m$, under \ref{EMAR},
        \item[-] is not always identifiable from any single pattern $m' \in L_m$ under \ref{PMMMAR}.
    \end{itemize}
In addition, if $\sum_{j=1}^d m_j  > 1$, $p(o^c(x,m) \mid o(X,m)=o(x,m))$ is not always identifiable from $L_{m}$ under \ref{PMMMAR}.
\end{restatable}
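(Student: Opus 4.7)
The four bullets split into two positive statements (for \ref{CIMAR} and \ref{EMAR}), which are direct from the respective definitions via singleton mixtures, and two negative statements (under \ref{PMMMAR}) requiring explicit counterexamples. For \ref{CIMAR}, given any $m' \in L_m$ I take $\mathcal{M}_0 = \{m'\}$ and $w_{m'}(o(x,m)) \equiv 1$; the singleton mixture equals $p(o^c(x,m) \mid o(X,m)=o(x,m), M=m')$, which by the definition of \ref{CIMAR} agrees with $\pX(o^c(x,m) \mid o(X,m)=o(x,m))$ on $\X_{\mid m'}^{m}$. The \ref{EMAR} case is the same argument applied at $m'=0$, noting $0 \in L_m$ trivially and $0 \in \mathcal{M}$ by assumption. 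The constraint $m' \in L_m$ guarantees $o^c(X,m)$ is observed in pattern $m'$, so the singleton conditional is a genuine density on the right space.

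\textbf{Single-pattern failure under \ref{PMMMAR}.} For Part 3 I reuse Example~\ref{Example1} with $m = m_3 = (1,0,0)$, so $L_{m_3} = \{m_1, m_2\}$. The excerpt already computes $p(x_1 \mid x_2, x_3, M=m_1) = 2x_1 \neq 1 = \pX(x_1 \mid x_2, x_3)$; a symmetric Bayes calculation, using $\Prob(M=m_2 \mid X=x) = 2/3 - 2x_1/3$ and uniform $X$, gives $p(x_1 \mid x_2, x_3, M=m_2) = 2(1-x_1)$, again unequal to the target. Hence no singleton mixture from $L_{m_3}$ recovers the target.

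\textbf{Collective failure from $L_m$ when $\sum_j m_j > 1$ (main obstacle).} The strategy is to engineer a \ref{PMMMAR} distribution in which $L_m \cap \mathcal{M} = \{0\}$, so the mixture collapses to the single term $p(\cdot \mid M=0)$, and to force this term to differ from the unconditional target. Fix $m$ with $\sum_j m_j > 1$, pick two distinct indices $i, j$ with $m_i = m_j = 1$, and set $\mathcal{M} = \{0, m, e_i, e_j\}$, where $e_k$ denotes the pattern with a single $1$ at coordinate $k$. Then $|\mathcal{M}|=4>3$, and since $(e_k)_k = 1 = m_k$ for $k \in \{i,j\}$, neither $e_i$ nor $e_j$ lies in $L_m$, so $L_m = \{0\}$ as desired. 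I take $X \sim \mathrm{Unif}([0,1]^d)$ so $\pX \equiv 1$, set $p(x \mid M=m) \equiv 1$, and set $p(x \mid M=e_k) = g_k(x_{-k})$ for non-uniform densities $g_k$ on $[0,1]^{d-1}$; this choice automatically yields $p(x_k \mid x_{-k}, M=e_k) = 1 = \pX(x_k \mid x_{-k})$, so \ref{PMMMAR} holds at each $e_k$ (and trivially at $0$ and at $m$). Fixing small $a, c > 0$ with $\Prob(M=m)=a$, $\Prob(M=e_i)=\Prob(M=e_j)=c$, $\Prob(M=0)=1-a-2c$, the marginal constraint $\pX \equiv 1$ forces
\[
  p(x \mid M = 0) \;=\; \frac{1 - a - c\,g_i(x_{-i}) - c\,g_j(x_{-j})}{1 - a - 2c},
\]
which for sufficiently small $a, c$ is a valid nonnegative density and depends non-trivially on $x_i$ and $x_j$ (through $g_j(x_{-j})$ and $g_i(x_{-i})$ respectively). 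Since $i, j \in \{k : m_k = 1\}$, this dependence falls on the missing coordinates $o^c(x,m)$, so $p(o^c(x,m) \mid o(X,m)=o(x,m), M=0) \neq \pX(o^c(x,m) \mid o(X,m)=o(x,m)) \equiv 1$; with $L_m = \{0\}$ there is no other pattern available to rescue the mixture. The delicate step is to confirm that this joint distribution really satisfies \ref{PMMMAR}: by Proposition~\ref{amazinglemma0} it suffices to check \ref{SMARII}, i.e.\ that $\Prob(M=m' \mid X=x)$ depends on $x$ only through $o(x, m')$ for each of the four patterns. This is immediate from the Bayes identity $\Prob(M=m' \mid X=x) = \Prob(M=m')\,p(x \mid M=m')/\pX(x)$ together with $\pX \equiv 1$ and the prescribed form of each $p(x \mid M=m')$.
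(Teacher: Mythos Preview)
Your argument is correct. The two positive bullets and the single-pattern failure match the paper's own proof essentially verbatim (the paper also invokes Example~\ref{Example1} for the third bullet, sharing the cosmetic issue that $|\mathcal{M}|=3$ there; your Part~4 construction incidentally covers Part~3 as well with $|\mathcal{M}|=4$).

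For the final claim you take a genuinely different route. The paper exhibits a four-pattern law (Example~\ref{Example5}) in which $L_m$ contains \emph{two} patterns, and then shows directly that the resulting affine system in the weights, $w_{m_1}(o(x,m))\cdot\tfrac12(x_1+x_2)+w_{m_2}(o(x,m))\cdot(2-x_2)=1$, admits no solution valid for all $(x_1,x_2)$. You instead engineer $L_m=\{0\}$ by placing the auxiliary patterns $e_i,e_j$ inside the missing set of $m$, so the ``mixture'' degenerates to the single term $p(\cdot\mid M=0)$; the claim then reduces to exhibiting a \ref{PMMMAR} law that is not \ref{EMAR}, which your explicit formula for $p(x\mid M=0)$ delivers. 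Your construction is more elementary and, as written, works uniformly for every $m$ with $\sum_j m_j>1$, whereas the paper's example has the complementary virtue of showing that the obstruction persists even when $|L_m|>1$ and a nontrivial mixture is available. One small point worth tightening: ``non-uniform'' alone does not guarantee that $g_j$ varies in the coordinate $x_i$; to make the dependence claim airtight, fix for instance $g_j(x_{-j})=2x_i$ and $g_i(x_{-i})=2x_j$.
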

To appreciate Proposition \ref{prop_identifiability_mar}, imagine for any pattern $m'$ in which $o^c(x,m)$ is observed, $X$ was already correctly imputed, such that $X\mid M=m'$ is available for all $m' \in L_m$. In this case, it would still not be possible to identify $\pX(o^c(x,m) \mid o(X, m)=o(x,m))$ under \ref{PMMMAR}, as no mixture of the conditional distribution $p(o^c(x, m) \mid o(X, m)=o(x, m), M=m')$ will recover the correct distribution. This is related to the fact that 
\ref{PMMMAR} still allows for a change in the conditional distributions over different patterns, as the following example illustrates.

\begin{example}\label{interesting_new_Example}
    Consider  $X = (X_1,X_2,X_3)$, where each $X_1, X_2, X_3$ is  marginally uniformly distributed over $[0,1]$, with $(X_1,X_2)$ being dependent and $X_3$ being independent of $(X_1, X_2)$. Moreover, we consider 
    \begin{align*}
        \mathcal{M}&=\{m_1,m_2, m_3, m_4\}\\
        &=\{(0,0,0), (0,1,0), (0,0,1), (1,1,0)\},
    \end{align*}
 and
        \begin{align*}
        &\Prob(M=m_1\mid X=x)=(x_1+x_2)/3,\\ 
        &\Prob(M=m_2\mid X=x)= (1-x_1)/3, \\
        & \Prob(M=m_3\mid X=x)= (1-x_2)/3, \\ 
        &\Prob(M=m_4\mid X=x)= 1/3. 
    \end{align*}
The details of this example are given in Appendix \ref{Sec_AdditionalexampleDetails}. First, this example clearly meets \ref{SMARII}, but not \ref{EMAR} or \ref{CIMAR}. Let us now consider $m=m_4$, such that $L_m=L_{m_4}=\{m_1,m_3\}$. Then $o^c(x,m_4)=(x_1,x_2)$ and $o(x,m_4)=x_3$ and it holds that 
\begin{align*}
    &p(x_1,x_2 \mid X_3=x_3, M=m_1)\\
    = &~ \pX(x_1,x_2 \mid X_3=x_3) (x_1+x_2)/2, \\
    \quad &p(x_1,x_2 \mid X_3=x_3, M=m_3)\\
    = & ~\pX(x_1,x_2 \mid X_3=x_3) (1-x_2)/2,
\end{align*}
so that there is no convex combination such that, for all $x_1, x_2$,  \begin{align}
& \pX(x_1,x_2 \mid x_3) \nonumber \\
= & ~w_{m_1}(x_3)p(x_1,x_2 \mid X_3=x_3, M=m_1) \nonumber\\
 & + w_{m_3}(x_3)p(x_1,x_2 \mid X_3=x_3, M=m_3). \nonumber 
\end{align}
This is also true if there is dependence between $X_1,X_2$ and $X_3$. Thus we cannot learn $\pX(x_1, x_2 \mid X_3=x_3)$ from the patterns where $x_1$, $x_2$ are observed. This is illustrated in Figure \ref{NewExampleIllustration}. While the marginal distribution $\pX(x_1)=\pX(x_1 \mid X_3=x_3)$ and $\pX(x_2)=\pX(x_2 \mid X_3=x_3)$ are uniform, Figure \ref{NewExampleIllustration} illustrates that $\pX(x_1 \mid M \in L_{m_4})$ and $\pX(x_2 \mid M \in L_{m_4})$ are not. 
\end{example}

\begin{figure}
    \centering
    \includegraphics[width=1\linewidth]{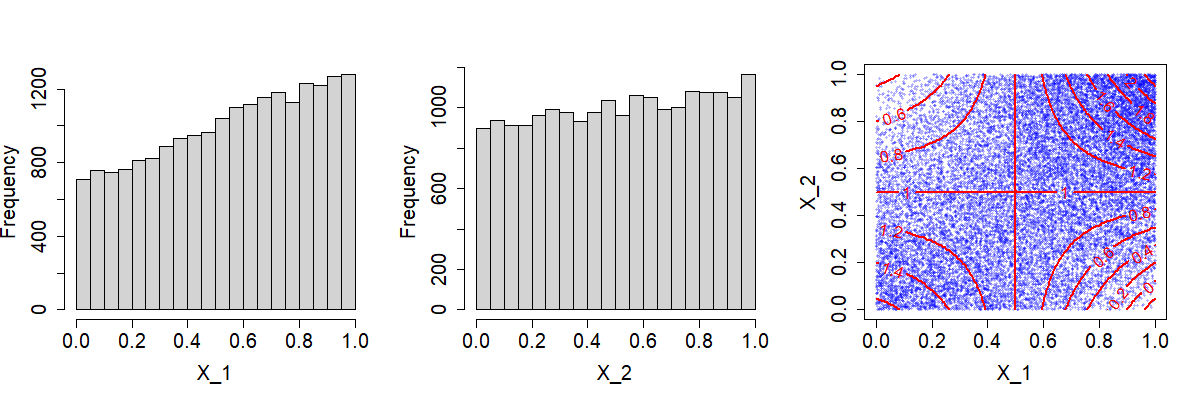}
    \caption{Illustration of the distribution of $(X_1, X_2) \mid M \in L_{m_4}$, Left: Univariate distribution of $X_1 \mid M \in L_{m_4}$, Middle: Univariate distribution of $X_2 \mid M \in L_{m_4}$, Right: Joint distribution with density overlay.}
    \label{NewExampleIllustration}
\end{figure}
On the contrary, under \ref{EMAR}, we are able to impute $o^c(X,m)$ based only on the distribution of the complete input vector. However, such a practice may be difficult due to the low number of complete observations. At the price of the more stringent \ref{CIMAR} assumption, we are able to build correct imputations by leveraging the information of all missing patterns.

Note that the negative result for \ref{PMMMAR} in Proposition \ref{prop_identifiability_mar} assumed $\sum_{j=1}^d m_j > 1$, i.e. the dimension of $o^c(X,m)$ must be strictly larger than  one. Indeed, it turns out that if $o^c(X,m)=X_j$ for some variable $j$, it is possible to identify $\pX(o^c(x,m) \mid o(X,m)=o(x,m))=\pX(x_j \mid X_{-j}=x_{-j})$. More generally, we now show that identification of the correct conditional distributions is possible under \ref{PMMMAR}, if one focuses \emph{on one variable $X_j$ at a time}. To this end, let $L_{j}=\{m \in \mathcal{M}: m_j=0  \},$ be the set of patterns in which $x_j$ is observed. Based on the missing patterns in $L_{j}$, one can build the following mixture distribution, 
\begin{align}\label{hstarminusj}
    & h^*(x_j \mid X_{-j}=x_{-j}) \nonumber \\
      = & \sum_{m \in L_j} \frac{p(  x_{-j}\mid M=m) \Prob(M=m)}{\sum_{m \in L_j} p(  x_{-j}\mid M=m) \Prob(M=m)} \\
      & \quad \times p( x_j \mid X_{-j}=x_{-j}, M=m)\nonumber,
\end{align} 
which is based \textit{only on the missing patterns such that $x_j$ is observed}. Here $x_{-j}$ corresponds to the set of all variables, except the $j$th. Thus the mixture \eqref{hstarminusj} coincides with \eqref{h0star}, if $o^c(x,m)=x_j$ and
\begin{align*}
    w_{m'}(o(x,m))=\frac{p(  x_{-j}\mid M=m') \Prob(M=m')}{\sum_{m \in L_j}p(  x_{-j}\mid M=m)\Prob(M=m)}.
\end{align*}

\begin{restatable}{proposition}{identificationprop}\label{amazingprop}
    Under \ref{PMMMAR}, the predictor $h^*$ defined in \eqref{hstarminusj} satisfies, for all $j \in \{1, \ldots, d\}$, for all $x \in \X$ such that $p(x_{-j} \mid M_j=0) > 0$,
    \begin{align}
        h^*(x_j \mid X_{-j}=x_{-j})=\pX( x_j \mid  X_{-j}=x_{-j}) .
    \end{align}
\end{restatable}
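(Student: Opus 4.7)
The plan is to reduce the statement to the simple conditional independence $X_j \indep M_j \mid X_{-j}$ (where $M_j$ denotes the $j$-th coordinate of $M$), and then derive this from \ref{PMMMAR} using its selection-model equivalent from Lemma \ref{amazinglemma} and Proposition \ref{amazinglemma0}.

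The first, and in my view conceptual, step is to notice that the weights in \eqref{hstarminusj} absorb the conditioning so that each summand collapses to the joint subdensity $p(x,M=m)$. Summing over $m \in L_j = \{m : m_j=0\}$ then yields
\begin{align*}
h^*(x_j \mid X_{-j}=x_{-j}) &= \frac{\sum_{m \in L_j} p(x, M=m)}{\sum_{m \in L_j} p(x_{-j}, M=m)} \\
&= \frac{p(x, M_j=0)}{p(x_{-j}, M_j=0)} \\
&= \pX(x_j \mid X_{-j}=x_{-j}, M_j=0).
\end{align*}
Hence the proposition reduces to showing $\pX(x_j \mid X_{-j}=x_{-j}, M_j=0) = \pX(x_j \mid X_{-j}=x_{-j})$, which is equivalent to $X_j \indep M_j \mid X_{-j}$.

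For the second step, I would apply \ref{SMARII}: for every $m \in \mathcal{M}$, one has $\Prob(M=m \mid X=x) = \Prob(M=m \mid o(X,m)=o(x,m))$. For any $m$ with $m_j=1$ the right-hand side depends only on $x_{-j}$, so $\Prob(M=m \mid X=x)$ is itself a function of $x_{-j}$ alone. Summing over all such $m$ gives $\Prob(M_j=1 \mid X=x) = g(x_{-j})$ for some measurable $g$, and conditioning on $X_{-j}$ identifies $g(x_{-j}) = \Prob(M_j=1 \mid X_{-j}=x_{-j})$. A short Bayes computation then turns this into the desired equality of conditional densities on the support $\{x : \pX(x_{-j} \mid M_j=0) > 0\}$ stipulated in the proposition.

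The plan involves no real obstacle; the substantive move is the rearrangement in the first step, after which \ref{PMMMAR} is used only in its weakest form. The sole subtlety is the convention $w_{m'}(\cdot)=0$ whenever $p(x_{-j} \mid M=m')=0$, which makes the sum-rearrangement legitimate and ensures that all ratios are well-defined precisely on the support specified in the statement.
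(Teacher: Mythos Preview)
Your proposal is correct and follows essentially the same route as the paper: both rewrite $h^*(x_j \mid x_{-j})$ as $p(x_j \mid x_{-j}, M_j=0)$ (the paper expresses this equivalently as $p(x_j \mid x_{-j})$ times the ratio $\sum_{m\in L_j}\Prob(M=m\mid x)\big/\sum_{m\in L_j}\Prob(M=m\mid x_{-j})$), and then invoke \ref{SMARII} on the patterns with $m_j=1$ to obtain $\Prob(M_j=0\mid X=x)=\Prob(M_j=0\mid X_{-j}=x_{-j})$, which is precisely your $X_j \indep M_j \mid X_{-j}$. The only difference is presentational---you name the conditional-independence conclusion explicitly, whereas the paper verifies the ratio equals one.
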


We note that $\Prob(M \in L_j)$ corresponds to the probability that $X_j$ is observed, i.e. $\Prob(M \in L_j)= \Prob(M_j=0)$. To show Proposition \ref{amazingprop} we simply show that
\begin{align*}
       h^*(x_j \mid X_{-j}&=x_{-j}) =p( x_j \mid  X_{-j}=x_{-j}) \\
       & \times \frac{1-\sum_{m \notin L_j} \Prob(M=m \mid X=x)}{1-\sum_{m \notin L_j} \Prob(M=m \mid  X_{-j}=x_{-j} ) }.
\end{align*}
Since the sum in the numerator is not allowed to depend on $x_j$ by \ref{SMARII}, Proposition \ref{amazingprop} follows immediately.

\Cref{amazingprop} shows that the true distribution $\pX( x_j \mid  X_{-j}=x_{-j})$ is indeed identifiable from all available patterns. Intuitively at $X_j$, one can divide the $|\mathcal{M}|$ patterns to two sets, one where $X_j$ is missing, and one where it is observed. Though these two aggregated patterns are mixtures of several patterns $m \in \mathcal{M}$, the MAR condition implies that both aggregated patterns have the same conditional distribution $X_j \mid X_{-j}$, thus allowing to identify the conditional distribution in the pattern where $X_j$ is observed. 

\begin{example}[Example \ref{interesting_new_Example} continued]
Consider the setting of Example \ref{interesting_new_Example}. While we cannot identified $\pX(x_1, x_2 \mid X_3=x_3)$ from the patterns where $x_1$, $x_2$ are observed, it holds that
\begin{align*}
    \pX(x_1 \mid X_2=x_2, X_3=x_3)=h^*(x_1 \mid X_2=x_2, X_3=x_3)\\
    \pX(x_2 \mid X_2=x_2, X_3=x_3)=h^*(x_2 \mid X_2=x_2, X_3=x_3),
\end{align*}
thus if $X_2$ were successfully imputed, $X_1$ could be recovered and vice versa.
\end{example}

We may unify and summarize Propositions \ref{prop_identifiability_mar} and \ref{amazingprop} as follows: Consider a subset $S \subset \{1,\ldots, d\}$, the subset of variables $X_S=(X_j)_{j \in S}$ and the set of all $m'$ such that all variables in $X_S$ are observed: $L_S=\{m': m'_j=0 \text{ for all } j \in S \}$. In Proposition \ref{prop_identifiability_mar}, we considered $X_S=o^c(X,m)$ for a fixed $m$ and, in Proposition \ref{amazingprop}, $X_S=X_j$ for fixed $j$. Then $p(x_S \mid X_{S^c}=x_{S^c})$ is identifiable from any $m' \in L_S$ under \ref{CIMAR}, from $0 \in L_S$ under \ref{EMAR} and identifiable from $L_S$ under \ref{PMMMAR} only under the additional condition that $|S|=1$. This is summarized in Table \ref{tab:missing_data}.

\begin{table}[h]
\centering
\begin{tabular}{lccc}
\toprule
Cardinality & CIMAR & EMAR & MAR \\
\midrule
$|S| \geq 2$ & $\forall m \in L_S$ & $0 \in L_S$ & Not always identifiable \\
$|S| = 1$ & $\forall m \in L_S$ & $0 \in L_S$ & $L_S$ \\
\bottomrule
\end{tabular}
\caption{Summary of Propositions \ref{prop_identifiability_mar} and \ref{amazingprop} for $S \subset \{1,\ldots, d\}$, the subset of variables $X_S=(X_j)_{j \in S}$ and the set of all $m$ such that all variables in $X_S$ are observed. For each condition the table indicates if $p(x_S \mid X_{S^c}=x_{S^c})$ is identifiable from $L_S$.}
\label{tab:missing_data}
\end{table}

\begin{remark}
    To further illustrate these results, consider the following analogy. One could think of different missing value patterns $m$ as different environments, such as different hospitals. Given several hospitals in which $(X_j , X_{-j})$ is observed, we would like to predict a variable $X_j$ from (fully observed) covariates $X_{-j}$ for a new hospital. Under \ref{CIMAR}, covariates $X_{-j}$ can arbitrarily change their distribution from one hospital to the next, but $X_j \mid X_{-j}$ remains the same in all hospitals, making it possible to learn the correct conditional distribution from any other hospital. Under \ref{PMMMAR} on the other hand, $X_j \mid X_{-j}$ may also differ from hospital to hospital. However, taking all hospitals such that $(X_j , X_{-j})$ is observed together, the distribution of $X_j \mid X_{-j}$ in the resulting mixture is the same as $X_j \mid X_{-j}$ needed for the new hospital.
\end{remark}

\paragraph*{Link with the FCS approach.} Imputing one variable at a time is precisely the approach of FCS. The goal of the FCS in general and the MICE approach in particular is to impute by iteratively drawing for all $j \in \{1, \ldots, d\}$ and $t \geq 1$,\begin{align*}
    x_j^{(t+1)} \sim \pX(x_j \mid X_{-j}=x_{-j}^{(t)}),
\end{align*}
where $x_{-j}^{(t)}=\{ x_l^{(t)} \}_{l \neq j}$ are the imputed and observed values of all other variables except $j$ at the iteration $t$. Doing this repeatedly leads to a Gibbs sampler that converges under quite mild conditions \cite[Chapter 10.2.4.]{RubinLittlebook}. Proposition \ref{amazingprop} shows that if we assume to have access to the true distribution $\pX(x_{-j})$, we can identify $\pX(x_j \mid X_{-j}=x_{-j})$  using only the observed values of $X_j$. In particular, this shows that, if we were to start from $X$ and successively impute
each $X_j$ with $h^*(x_j \mid X_{-j}=x_{-j})$, we would preserve the distribution of $X$, providing a stationary solution. As \ref{EMAR} and \ref{CIMAR} are both stronger than \ref{PMMMAR}, Proposition \ref{amazingprop} also holds with \ref{PMMMAR} replaced with either one of those conditions. Thus, Proposition \ref{amazingprop} shows that the FCS approach can identify the true conditional distributions in principle. 

\paragraph*{Block-wise FCS.}
The FCS approach of imputing one variable at a time has been criticized for computational reasons. Indeed, in FCS, $d$ models have to be fitted repeatedly, which can be computationally intensive for large $d$. One way to remedy this would be to use multi-output methods such as DRF to impute variables as blocks, such that, say, 10 variables may be imputed simultaneously. The idea of using block-wise FCS was already discussed in \cite[Chapter 4.7]{VANBUUREN2018}. However, Proposition \ref{prop_identifiability_mar} shows that one might not be able to recover the correct imputation distribution with this approach under \ref{PMMMAR}.

\paragraph*{Identifiability does not imply consistency.} In practice, we do not have access to any true distribution $\pX(x_j \mid X_{-j}=x_{-j}^{(t)})$ for $j \in \{1, \ldots, d\}$, and sample instead from an estimated distribution $p_n(x_j \mid X_{-j}=x_{-j}^{(t)})$, as shown in Algorithm \ref{alg:mice}. This naturally includes estimation error that propagates through the iterations. The characterization of the (finite-sample) error of nonparametric imputation is an open problem. In particular, our identification results, though an important first step, are far from results guaranteeing consistency of the imputation distribution and generally cannot explain the impressive performance of MICE in finite samples.

\begin{algorithm}
\caption{MICE Algorithm}
\label{alg:mice}
\begin{algorithmic}[1]
\REQUIRE Incomplete data matrix $\mathbf{X}$ with missing entries, maximum iterations $T$
\ENSURE Imputed complete data matrix
\STATE Initialize missing values (e.g., mean imputation) to obtain $\mathbf{X}^{(0)}$
\FOR{$t = 1, 2, \ldots, T$}
    \FOR{each variable $j \in \{1, \ldots, d\}$}
        \STATE Let $x_{-j}^{(t)}$ denote all variables except $x_j$ at iteration $t$
        \STATE Fit conditional model $p_n(x_j \mid X_{-j}=x_{-j}^{(t)})$ using observed data
        \STATE Sample imputations: $x_j^{(t+1)} \sim p_n(x_j \mid X_{-j}=x_{-j}^{(t)})$
        \STATE Update $\mathbf{X}^{(t+1)}$ with newly imputed values for variable $j$
    \ENDFOR
\ENDFOR
\RETURN $\mathbf{X}^{(T)}$
\end{algorithmic}
\end{algorithm}

\subsection{Overlap Condition}\label{Sec_overlap}


We note that our identification result in \Cref{amazingprop} only holds for $x_{-j}$ in the support of $P_{X_{-j} \mid M_j=0}$, i.e. $\{x_{-j}: p(x_{-j} \mid M_j=0) > 0\}$. For imputation, we would like to obtain $\pX(x_j \mid X_{-j}=x_{-j})$ for $x_{-j}$ in the support of $P_{X_{-j} \mid M_j=1}$. Thus, we need to ensure that the support of $P_{X_{-j} \mid M=1}$ is nested in the support of $P_{X_{-j} \mid M_j=0}$.

\begin{definition}\label{overlapdef}[Overlap Condition]
    The missingness mechanism meets the overlap condition if, for all $j \in \{1,\ldots,d\},$ for all $x \in  \X$, we have
    \begin{align}\label{OverlapDefinition}
        p(x_{-j} \mid M_j=1) > 0 \implies  p(x_{-j} \mid M_j=0) > 0.
    \end{align}
\end{definition}

This reflects the well-known fact that extrapolation outside of the support of a distribution is not possible in general, see e.g., \cite{pfister2024extrapolationaware}. However, \ref{PMMMAR} alone does not prohibit such extreme distributions shifts, as shown in Example \ref{nonoverlapexample}.


\begin{example}\label{nonoverlapexample}
Consider an example with two patterns, $\mathcal{M}=\{m_1, m_2\}$, with $m_1=(0,0)$, $m_2=(1,0)$ and $X_2|M=m_1 \sim \mathcal{U}([0,1])$, while $X_2|M=m_2 \sim \mathcal{U}([1,2])$. Moreover, assume $X_1=X_2\cdot \varepsilon$, with $\varepsilon \sim \mathcal{U}([0,1])$, such that $X_1 \mid X_2 \sim \mathcal{U}([0,X_2])$.
Since for $(x_1,x_2) \in [0,1]^2=\X_{\mid m_1}^{0}$, $p(x_1 \mid x_2, M=m_1)=\Ind\{0 \leq x_1 \leq x_2\}=p(x_1 \mid x_2)$, and analogously for $(x_1,x_2) \in [0,2]\times [1,2]=\X_{\mid m_2}^{0}$, this case meets \ref{CIMAR} and thus \ref{PMMMAR}, but the overlap condition \eqref{OverlapDefinition} is violated for $j=1$. To impute $X_1$ in pattern $m_2$ one would need to extrapolate from $X_2$ in $[0,1]$ to $X_2$ in $[1,2]$. 
\end{example}


    


Thus, \ref{PMMMAR} is not sufficient for imputation 
and needs to be complemented by an assumption that ensures that the support of $P_{X_{-j} \mid M_j=1}$ is nested in the support of $P_{X_{-j} \mid M_j=0}$. There are different possible ways to phrase sufficient conditions for Condition \eqref{OverlapDefinition}. We opt for the following version:


\begin{restatable}{lemma}{overlaplemma}\label{overlaplemma}
    The missingness mechanism meets the overlap condition in \Cref{overlapdef} if $0 \in \mathcal{M}$ and for all $x \in  \X$, we have
\begin{align}\label{Sufficientforoverlap}
    \Prob(M=0 | X=x ) > 0.  
\end{align}
\end{restatable}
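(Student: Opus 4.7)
The plan is to unpack the definition of the overlap condition and reduce it to a statement about the support of the joint distribution of $(X,M)$. Fix an arbitrary $j \in \{1,\ldots,d\}$ and an $x \in \X$ such that $p(x_{-j} \mid M_j = 1) > 0$, and set out to show $p(x_{-j} \mid M_j = 0) > 0$. First I would argue that $x = (x_j, x_{-j})$ lies in the support $\X$: indeed, $p(x_{-j} \mid M_j = 1) > 0$ together with the footnote convention ensures that there exists $x_j$ such that $p(x_j, x_{-j} \mid M_j = 1) > 0$, which forces $p(x_j, x_{-j}) > 0$, i.e.\ $(x_j, x_{-j}) \in \X$.

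Next, I would apply the hypothesis $\Prob(M = 0 \mid X = x) > 0$ at this point $x$. Using the selection-model factorization, this yields
\begin{align*}
p(x, M = 0) \;=\; \Prob(M = 0 \mid X = x)\, \pX(x) \;>\; 0.
\end{align*}
Marginalizing the $j$th coordinate (i.e.\ integrating $x_j$ out against the dominating measure $\mu$) then gives $p(x_{-j}, M = 0) > 0$.

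Finally, since the event $\{M = 0\}$ is contained in the event $\{M_j = 0\}$, we have the pointwise inequality $p(x_{-j}, M_j = 0) \geq p(x_{-j}, M = 0) > 0$. The assumption $0 \in \mathcal{M}$ ensures $\Prob(M_j = 0) \geq \Prob(M = 0) > 0$, so dividing gives $p(x_{-j} \mid M_j = 0) > 0$, which is the overlap condition of Definition~\ref{overlapdef}.

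The argument is essentially a short chain of implications; there is no real obstacle beyond being careful about the measure-theoretic setup. The only point to handle with a bit of care is the first step, where one has to use the convention on the support in the footnote of Section~\ref{Sec_Notation} to pass from positivity of the marginal conditional density $p(x_{-j} \mid M_j = 1)$ to existence of a point in $\X$ whose $-j$ components equal $x_{-j}$, so that the pointwise assumption $\Prob(M = 0 \mid X = x) > 0$ can be legitimately invoked.
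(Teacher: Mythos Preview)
Your overall strategy matches the paper's: show that $p(x_{-j}\mid M_j=1)>0$ forces $p(x_{-j})>0$, then use the positivity assumption on $\Prob(M=0\mid X=x)$ together with $\{M=0\}\subset\{M_j=0\}$ to conclude $p(x_{-j}\mid M_j=0)>0$. The paper phrases the last step via Bayes' rule, writing $p(x_{-j}\mid M_j=0)=\Prob(M_j=0\mid x_{-j})\,p(x_{-j})/\Prob(M_j=0)$ and observing that each factor is positive; you instead work directly with the joint density and the set inclusion. Either route is fine.

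There is, however, a genuine slip in your marginalization step. You fix \emph{one} point $x=(x_j,x_{-j})\in\X$, apply the hypothesis there to get $p(x,M=0)>0$, and then say that integrating out $x_j$ yields $p(x_{-j},M=0)>0$. Positivity of a density at a single point does not imply positivity of its marginal: a Lebesgue-null set contributes nothing to the integral. What you actually need is that the hypothesis $\Prob(M=0\mid X=\tilde x)>0$ holds for \emph{every} $\tilde x\in\X$, so that
\[
p(x_{-j},M=0)=\int \Prob(M=0\mid \tilde x_j,x_{-j})\,p(\tilde x_j,x_{-j})\,d\mu(\tilde x_j)
\]
has a strictly positive integrand on the set $\{\tilde x_j:(\tilde x_j,x_{-j})\in\X\}$, which has positive $\mu$-measure because $p(x_{-j})>0$. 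Once you state it this way the argument goes through; but as written, the step ``apply the hypothesis at this point, then marginalize'' is not valid. The paper sidesteps this by never selecting a single $x_j$: it goes straight from $p(x_{-j}\mid M_j=1)>0$ to $p(x_{-j})>0$ and then uses $\Prob(M_j=0\mid x_{-j})$, which already encodes the integral over $x_j$.
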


Condition \eqref{Sufficientforoverlap} is sometimes referred to as a positivity assumption and is fundamentally important in the literature on inverse probability weighting (IPW, see, e.g., \cite{Seaman2018-IPW}) and the literature of graphical modeling of missing values (see, e.g., \cite{Mohan2013, mohan2013missing, nabi2025define}). However, in the latter approach, \cite{nabi2020missing, nabi2025define} assume that, for all $m \in \mathcal{M}, x \in \X$,  $\Prob(M=m \mid X=x) > 0$. This is a strictly stronger condition than \eqref{Sufficientforoverlap} if $0 \in \mathcal{M}$. For our purposes, \eqref{Sufficientforoverlap} is sufficient, aligns well with \ref{EMAR} (as shown below) and guarantees \eqref{OverlapDefinition}. For instance, in Example \ref{nonoverlapexample}, it holds that
\begin{align*}
&\Prob(M=m_1 \mid X_1=x_1,X_2=x_2)\\
&=\frac{\Ind\{x_2 \in [0,1]\} \Prob(M=m_1)}{\Ind\{x_2 \in [0,1]\}  \Prob(M=m_1) + \Ind\{x_2 \in [1,2]\}  \Prob(M=m_2) }.
\end{align*}
This is zero for $x_2 \in [1,2]$, thus violating \eqref{Sufficientforoverlap}.
While the condition in \Cref{overlapdef} is a simple requirement, such discussion often seems to be missing in the analysis of imputation. Moreover, even with this assumption, (unconditional) distribution shifts can occur across different missing patterns, even under stringent assumptions such as \ref{CIMAR}, as highlighted in \Cref{Example2} below.

\begin{example}\label{Example2}
    Consider the following Gaussian mixture model for two patterns $m_1=(0,0)$ and $m_2=(1,0)$: 
    \begin{align*}
        (X_1, X_2) \mid M=m_1 &\sim N\left(\begin{pmatrix} 0\\0 \end{pmatrix}, \begin{pmatrix}
            2 & 1\\
            1 & 1
        \end{pmatrix}\right) \\
        (X_1, X_2) \mid M=m_2 &\sim N\left(\begin{pmatrix} 5\\5 \end{pmatrix}, \begin{pmatrix}
            2 & 1\\
            1 & 1
        \end{pmatrix}\right).
    \end{align*}
An equivalent formulation is to generate first $X_2$ from a mixture of univariate Gaussian distributions $N(0,1)$ and $N(5,1)$ and then setting $X_1=X_2+\varepsilon$, with $\varepsilon \sim N(0,1)$, with $X_1$ being missing when $X_2$ is generated from $N(5,1)$.
In this example, \ref{CIMAR} and thus \ref{PMMMAR} holds. Moreover, it can be checked that Assumption \ref{Sufficientforoverlap} is also met. However, the distribution of $X_2$ in pattern $m_1$ ($N(0,1)$) is heavily shifted compared to pattern $m_2$ ($N(5,1)$). Consequently, an estimation method that is able to accurately learn $P_{X_1 \mid X_2}$, for $X_2 \sim N(0,1)$, also needs to be able to extrapolate to $P_{X_1 \mid X_2}$, for $X_2 \sim N(5,1)$. Figure \ref{fig:overlapillustration} illustrates the overlap and nonoverlap case in two examples, while Figure \ref{fig:Example2} shows how nonparametric methods struggle in this example for finite samples. 
\end{example}

\begin{remark}
    We note that \eqref{overlapdef} is also implied by $\Prob(M_j=0 \mid X_{-j}=x_{-j}) \geq \delta > 0$ for all $x_{-j}$ similar to the overlap condition for binary treatments in the causal inference literature, with $M_j$ being the treatment indicator. There are methods available to test the overlap condition in binary treatments, for instance in \cite{overlap1}, that might be applicable for imputation. However, the validity of these methods under missing values would need to be carefully assessed.
\end{remark}

\begin{figure}
    \centering
    \includegraphics[width=1\linewidth]{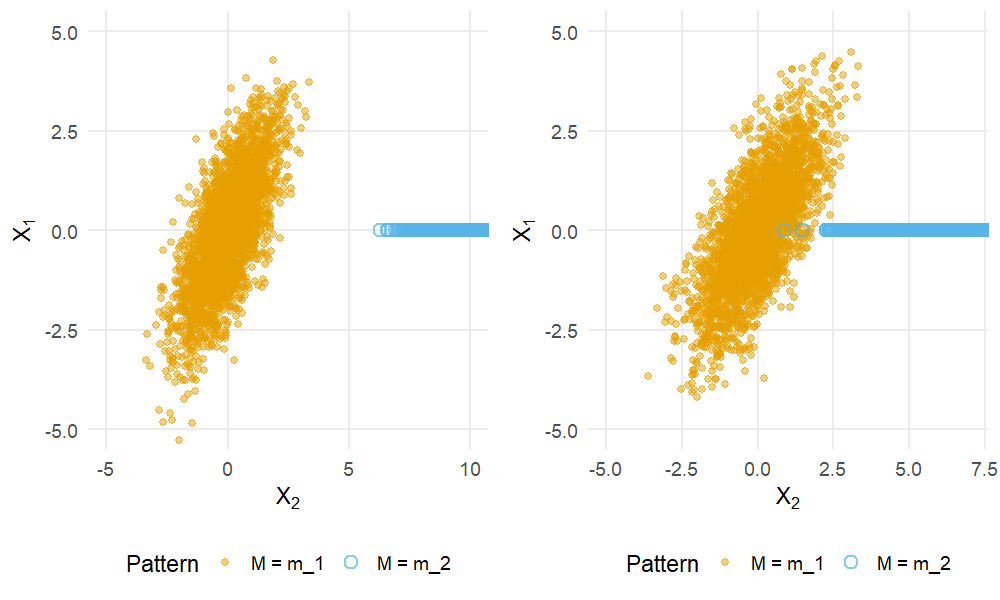}
    \caption{Left: An example where \eqref{overlapdef} is not met. Right: An example where \eqref{overlapdef} is met. In both cases, fully observed point are drawn in dark yellow, while only $X_2$ is shown in the pattern with missing $X_1$ in blue. In both cases $X_1 \mid X_2$ remains the same.}
    \label{fig:overlapillustration}
\end{figure}

\begin{figure}
    \centering

    \begin{tikzpicture}[scale=0.8, every node/.style={scale=0.8},
    box/.style={draw, rectangle, minimum width=2.5cm, minimum height=1.8cm, align=center},
    smallbox/.style={draw, rectangle, minimum width=2cm, minimum height=1cm, align=center},
    bluebox/.style={draw=blue, rectangle, minimum width=2cm, minimum height=1cm, align=center},
    arrow/.style={-{Stealth[length=5pt]}, blue, thick}
]

\node[box] (observed) at (-2,2.5) {\small{It is possible to identify}\\
\small{$p(x_j \mid X_{-j}=x_{-j})$ for all $x_{-j}$ }\\
\small{ such that $p(x_{-j} \mid M_j=0) > 0$}\\
\small{under \ref{PMMMAR}.}}; 
\node[box, draw=blue] (imputed) at (-2,0) { \small{It is possible to impute }\\
\small{with the above distribution} \\
\small{if $p(x_{-j} \mid M_j=1) > 0$ implies}\\
\small{ $p(x_{-j} \mid M_j=1) > 0$ \eqref{overlapdef}}};

\matrix (data) [matrix of math nodes, row sep=0.2cm, column sep=0.8cm, nodes={minimum width=1.5cm}, font=\large] at (4,1.5) {
  x_{1,j} & x_{1,-j} \\
  x_{2,j} & x_{2,-j} \\
  \vdots & \vdots \\
  \texttt{NA} & x_{\ell,-j} \\
  \texttt{NA} & x_{\ell+1,-j} \\
  \vdots & \vdots \\
};

\draw[arrow, black, thick] ($(data.west)+(0,1)$) -- (observed);
\draw[arrow] (imputed)--($(data.west)+(0,-1.5)$);

\node at ($(data.north west)+(-0.5,-2.7)$) {\scalebox{2.2}{$\left(\vphantom{\begin{matrix}x_{1,j}\\x_{2,j}\\\vdots\\x_{\ell+1,j}\\\vdots\end{matrix}}\right.$}};
\node at ($(data.north east)+(0.5,-2.7)$) {\scalebox{2.2}{$\left.\vphantom{\begin{matrix}x_{1,j}\\x_{2,j}\\\vdots\\x_{\ell+1,j}\\\vdots\end{matrix}}\right)$}};



\draw[blue, thick] ($(data.west)-(-0.3,0)$)-- ++(-0.2,0) -- ++(0,-2.6) -- ++(0.2,0);
\draw[blue, thick] ($(data.east)+(-0.3,0)$)-- ++(0.2,0) -- ++(0,-2.6) -- ++(-0.2,0);

\draw[black, thick] ($(data.west)-(-0.3,-2.6)$)-- ++(-0.2,0) -- ++(0,-2.6) -- ++(0.2,0);
\draw[black, thick] ($(data.east)+(-0.3,2.6)$)-- ++(0.2,0) -- ++(0,-2.6) -- ++(-0.2,0);

\end{tikzpicture}

    \caption{Conceptual illustration of the conclusions of Section \ref{Sec_FCSunderMAR} and \ref{Sec_overlap} for \ref{PMMMAR}. Section \ref{Sec_FCSunderMAR} with Proposition \ref{amazingprop} shows that if $P_{X_{-j} \mid M_j=0}$ is available, $\pX(x_j \mid X_{-j}=x_{-j})$ can be identified for all $x_{-j}$ such that $ p(x_{-j} \mid M_j =0) > 0$. Section \ref{Sec_overlap} adds the overlap condition \eqref{overlapdef} which is necessary to generalize from $X_{-j} \mid M_j=0$ to $X_{-j} \mid M_j=1$.}
    \label{fig:scoreillustrationOj}
\end{figure}

\section{Comparison to the literature on graphical models} \label{Sec_GraphicalModelscomparison}

Identifiability under missingness was studied at length in the literature on graphical modeling \cite{Mohan2013, mohan2013missing, mohan2021graphical, bhattacharya20b, nabi2020missing, nabi2025define}. 

Contrary to our case so far, where we worked directly with the (unobserved) full distribution of $(X,M)$ and studied identification of $X_j \mid X_{-j}$, this literature asks the questions whether the full distribution $P$ is recoverable from observed data $(X^*,M)$. We therefore use the term ``recoverable'', as in \cite{Mohan2013, mohan2021graphical} to differentiate it from our arguably simpler perspective above. It is well known \citep[see, e.g.,][]{nabi2020missing} that, the full law $P$ is recoverable iff \eqref{Sufficientforoverlap} holds and
\begin{align}\label{fullrecoverability}
    \Prob(M=m \mid X=x) \text{ is recoverable for all } m \in \mathcal{M}, x \in \X.
\end{align}
%
Assume now that $0 \in \mathcal{M}$ and that there exists a set of variables $X_O$ that are always observed. In the graphical modeling literature,  ``MAR'' was traditionally taken to be \ref{RMAR} \citep[see, e.g.,][]{Mohan2013, mohan2021graphical}, and in this case, $\Prob(M=m \mid X=x)=\Prob(M=m \mid X_O=x_O)$, so that \eqref{fullrecoverability} holds and $P$ is recoverable. Since \ref{CIMAR} is equivalent to \ref{RMAR}, this also means that $P$ is recoverable under \ref{CIMAR}.

Now consider the case of \ref{EMAR}. As shown in Lemma \ref{CIMARequivRMAR}, \ref{EMAR} is equivalent to $\Prob(M=0 \mid X=x)=\Prob(M=0 \mid X_O=x_O)$. This is also true if $O=\emptyset$, as in this case $\Prob(M=0 \mid x)=\Prob(M=0)$. This leads to the following proposition.

\begin{restatable}{proposition}{EMARprop}\label{EMARrecoverability}
Assume $0 \in \mathcal{M}$. Under \ref{EMAR} and positivity \eqref{Sufficientforoverlap}, the full law $P$ is recoverable.
\end{restatable}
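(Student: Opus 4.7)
The full law factors as $p(x,m) = p(x \mid M=m)\,\Prob(M=m)$, and since each $\Prob(M=m)$ is immediately observable from the distribution of $M$, it suffices to recover $p(x \mid M=m)$ for every $(x, m) \in \X \times \mathcal{M}$. For the fully observed pattern $m=0$, the density $p(x \mid M=0)$ is directly available from the complete cases. My plan for general $m \neq 0$ is to use the chain-rule factorization
\begin{align*}
p(x \mid M=m) = p\bigl(o^c(x,m) \mid o(X,m)=o(x,m), M=m\bigr) \cdot p\bigl(o(X,m)=o(x,m) \mid M=m\bigr).
\end{align*}

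The second factor is simply the marginal of the observed coordinates within pattern $m$, hence directly observable. The first factor is where \ref{EMAR} does the work: \ref{PMMMAR} (implied by \ref{EMAR}) removes the conditioning on $M=m$, rewriting the first factor as $p(o^c(x,m) \mid o(X,m)=o(x,m))$; the additional clause of \ref{EMAR} then replaces this conditional density in turn by $p(o^c(x,m) \mid o(X,m)=o(x,m), M=0)$, which is observable from the fully observed pattern. Equivalently, I would use the rewritten form of \ref{EMAR} given in the paper, $p(o^c(x,m) \mid o(X,m)=o(x,m), M=m) = p(o^c(x,m)\mid o(X,m)=o(x,m), M=0)$, directly.

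The key subtlety, which I expect to be the main obstacle, is that the \ref{EMAR} identity is only stated for $x \in \X_{\mid 0}^{m}$, so I must verify this set exhausts the support of $P$. This is precisely where positivity \eqref{Sufficientforoverlap} enters: for any $x \in \X$, $\Prob(M=0 \mid X=x) > 0$ combined with $\Prob(M=0) > 0$ (which holds since $0 \in \mathcal{M}$) gives $p(x \mid M=0) = p(x)\,\Prob(M=0 \mid X=x)/\Prob(M=0) > 0$ via Bayes' rule, and marginalizing over the missing coordinates yields $p(o(x,m) \mid M=0) > 0$. Hence $x \in \X_{\mid 0}^{m}$ for every $m$, so the displayed identities hold on the full support of $P$. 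The resulting expression for $p(x,m)$ involves only observable densities and pattern probabilities, which establishes recoverability of $P$.
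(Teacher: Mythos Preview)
Your proof is correct and rests on the same key identity as the paper's, namely the chain-rule factorization
\[
p(x \mid M=m)=p\bigl(o^c(x,m)\mid o(x,m),M=m\bigr)\,p\bigl(o(x,m)\mid M=m\bigr)
\]
together with the \ref{EMAR} replacement of the first factor by its counterpart in the fully observed pattern. The paper organizes the argument slightly differently: it first invokes Lemma~\ref{CIMARequivRMAR} to get $\Prob(M=0\mid x)=\Prob(M=0\mid x_O)$ and thereby recovers $\PX$, then recovers $\Prob(M=m\mid x)$ for each $m$ (using the same factorization you use), and finally reconstructs $p(x,m)=\pX(x)\,\Prob(M=m\mid x)$. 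Your route via $p(x,m)=p(x\mid M=m)\,\Prob(M=m)$ is more direct and avoids the detour through $\PX$; you are also more explicit than the paper about why positivity~\eqref{Sufficientforoverlap} guarantees $x\in\X_{\mid 0}^{m}$ so that the \ref{EMAR} identity is applicable on all of $\X$. Either organization works, and the substantive content is the same.
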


Thus, $P$ is recoverable under \ref{EMAR}. Furthermore, an interesting recent graphical definition of MAR is given in \cite{nabi2025define}, which is much more general than \ref{RMAR}: In our notation, for a permutation $\{\pi(j)\}_{j=1\ldots, d}$ of $\{1\ldots, d\}$, we consider a factorization:
\begin{align}\label{graphicalMAR0}
    \Prob(M=m \mid X)=\prod_{j=1}^{d} \Prob(M_{\pi(j)}=m_{\pi(j)} \mid \text{Pa}(M_{\pi(j)}) ),
\end{align}
where $\text{Pa}(M_{\pi(j)})$ collects all elements $(X, M_{\pi(1)}, \ldots,$ $ M_{\pi(j-1)})$ that are ``parents'' of $M_{\pi(j)}$. We then require that, there exists a permutation $\pi$ such that for all $j$ and all $m$,
\begin{align}\label{graphicalMAR}
\Prob(M_{\pi(j)}=m_{\pi(j)} \mid \text{Pa}(M_{\pi(j)}) ) 
\end{align}
depends on $X$ only through $X^*$.  
In other words, $\Prob(M_{\pi(j)}$ $=m_{\pi(j)} \mid \text{Pa}(M_{\pi(j)}) )$ may depend on $X_{j}$ only on the event $M_j=0$. This essentially corresponds to \ref{PMMMAR} and the condition that the joint distribution $P$ can be factored according to a Directed Acyclic Graph (DAG). It can be shown that \eqref{graphicalMAR} holds for instance in Example \ref{interesting_new_Example}, where \ref{EMAR} does not hold. Considering the setting of Example \ref{interesting_new_Example}, and $\pi(j)=j$ for all $j$, $\Prob(M_{1}=1 \mid \text{Pa}(M_{1}) )=1/3$,
\begin{align*}
    \Prob(M_{2}=1 \mid \text{Pa}(M_{2}) )=
    \begin{cases}
        1 & \text{ if } M_1=1\\
        (1-x_1)/2& \text{ if } M_1=0.
    \end{cases}
\end{align*}
Thus, $\Prob(M_{2}=1 \mid \text{Pa}(M_{2}) )$ depends on $X_1$ only if $M_1=1$, i.e. only through $X_1^*$. Similarly, $\Prob(M_{3}=1 \mid \text{Pa}(M_{3}) )$ does not depend on $X$. We provide details in Appendix \ref{Sec_AdditionalexampleDetails}. This is connected to the fact that, knowing the dependency structure, it is straightforward to devise a strategy to recover the full distribution in this example: Since $X_3$ is independent uniform, one could recover $P_{X_3}$, as $\pX(x_3)=\pX(x_3 \mid M_3=0)$. This makes it possible to recover $h^*(x_2 \mid x_1, x_3)$, without having to impute $X_1$, making it possible to recover $P$. 


However, this strategy and Condition \eqref{graphicalMAR} is based on knowledge of the (conditional) independence structure. Moreover, it is possible to construct examples that meet \ref{PMMMAR}, but not \eqref{graphicalMAR} as the following adaptation of Example \ref{interesting_new_Example} shows:

\begin{example}\label{interesting_new_Example_adaptation}
        Consider variables $(X_1,X_2,X_3)$, each marginally distributed as a uniform between $[0,1]$, with $(X_1,X_2)$ being dependent and $X_3$ being independent of $(X_1, X_2)$. Moreover, we consider $$\mathcal{M}=\{m_1,m_2, m_3\}=\{(0,0,0), (0,1,0), (1,0,0)\}$$ and
    \begin{align*}
        &\Prob(M=m_1\mid X=x)=(x_1+x_2)/3,\\
        &\Prob(M=m_2\mid X=x)=(2-x_1)/3\\
        &\Prob(M=m_3\mid X=x)=(1-x_2)/3.
    \end{align*}
    It can be shown that in this example, Condition \eqref{Sufficientforoverlap} holds for $x \in  \X=[0,1]^3$. However, this example is again not \ref{EMAR}, so that $X_{1} \mid X_{2}, X_3$ cannot be learned from $m_1$ and similarly with $X_2 \mid X_1, X_3$. In fact, contrary to Example \ref{interesting_new_Example}, it can be shown that $\Prob(M_1=1 \mid M_2=0, X=x)$ depends on $x_1$ and $\Prob(M_2=1 \mid M_1=0, X=x)$ depends on $x_2$, making it impossible to find a factorization that meets \eqref{graphicalMAR}. Details are given in Appendix \ref{Sec_AdditionalexampleDetails}. This is again directly connected to the fact that there appears to be no clear strategy on how to recover $X_1$ without first imputing $X_2$ and vice-versa. 
\end{example} 

As such, as already mentioned in \cite{nabi2025define}, Condition \eqref{graphicalMAR} is strictly stronger than \ref{SMARII}. This is because ``A MAR missingness mechanism, according to Rubin’s definition, requires that for any given missingness pattern, the missingness is independent of the missing
values given the observed values. These types of restrictions are with respect
to missingness patterns [...] This missingness mechanism cannot be represented via
graphs'' \cite{nabi2025define}. As such, even in the general terminology of \cite{nabi2025define}, Example \ref{interesting_new_Example_adaptation} would be an MNAR example, and more advanced graphical tools are necessary to show identification, even when the dependency structure is perfectly known. On the other hand, Proposition \ref{amazingprop} still holds in this example, and in Section \ref{Sec_Empirical}, we empirically demonstrate that FCS imputation methods are capable of reproducing the distribution of $X$ accurately.

\section{Implications for Imputation}\label{Sec_Implicationsoverall}\label{Sec_Implications}

In Section \ref{Sec_Main_Results}, we proved the identification of $p( x_j \mid  X_{-j}=x_{-j})$ for all $j$, based on all missing data patterns under \ref{SMAR}. While this result is a first step to create methods that replicate the data distribution $\PX$, in practice it remains to estimate $\pX(x_j \mid X_{-j}=x_{-j})$ in the FCS iterations. From now on, we will refer to any method that estimates $\pX(x_j \mid X_{-j}=x_{-j})$ for all $j$, as an imputation method. In this section, we present three essential properties that an ideal imputation method should satisfy. We then discuss which properties are met by classical FCS imputation strategies and introduce a new imputation approach, denoted mice-DRF.  


The goal of each iteration of an FCS algorithm is to estimate the conditional distributions $\pX(x_j \mid X_{-j}=x_{-j})$ for all $j \in \{1, \hdots, d\}$. Thus, an imputation method is intrinsically a distributional estimator (Requirement \ref{distregressionmethod} below). This stands in contrast to methods that only estimate and impute by conditional means and medians, and may be as simple as using a Gaussian distribution with estimated conditional mean (denoted as mice-norm.nob below). However, in order to accurately estimate the conditional distributions $\pX(x_j \mid X_{-j}=x_{-j})$, an imputation method should be able to capture complex (potentially non-linear) interactions in the data (Requirement \ref{flexible}). Moreover, with Example \ref{Example2} in mind, an imputation method should be able to handle distributional shifts in the covariates (Requirement \ref{shifts}), under overlap \eqref{OverlapDefinition}.


Consequently, in the FCS framework under \ref{PMMMAR} and \eqref{OverlapDefinition}, an imputation method should
\begin{enumerate}[label=(\arabic*)]
    \item\label{distregressionmethod} be a distributional regression method
    \item\label{flexible} be able to capture nonlinearities and interactions in the data
    \item\label{shifts} be able to deal with distributional shifts in the covariates. 
\end{enumerate}


We note that without overlap \eqref{OverlapDefinition}, point \ref{shifts} cannot be met using nonparametric methods, as extrapolation will only be possible under parametric assumptions, see e.g. \cite{pfister2024extrapolationaware}.

\begin{remark}
    \cite{greatoverview} (Section 4) also discusses best practices for general imputation methods. Their points partly overlap with ours. In particular, they suggest that ``imputations should reflect uncertainty about missing values'' (corresponding to \ref{distregressionmethod}) and ``imputation models should be as flexible as possible'' (corresponding to \ref{flexible}). As we note in Section \ref{Sec_Discussion}, they also emphasizes that for multiple imputation the uncertainty of the imputation model should be considered. This is not met by the imputation methods presented here and is an open problem for nonparametric imputation. While this becomes less consequential in large samples, this additional uncertainty is needed for reliable uncertainty quantification with multiple imputation using Rubin's Rules. 
\end{remark}

We now describe some of the state-of-the-art imputation methods, based on the benchmark analysis of papers by \cite{Waljee2013, RFimputationpaper, awesomebenchmarkingpaper, wang2022deep, ImputationScores}.

We first consider two very simple methods, the \emph{Gaussian} and \emph{regression} imputations. Following the naming convention of the \texttt{mice} \textsf{R} package, we also denote the former as \emph{mice-norm.nob} and the latter as \emph{mice-norm.predict}. Both fit a linear regression of $X_j$ onto $X_{-j}$. The Gaussian imputation then imputes $X_j$ by drawing from a Gaussian distribution, while the regression imputation simply uses the conditional mean. Given that linear regression is known to extrapolate well when the true underlying model is linear, \emph{mice-norm.predict} meets \ref{shifts}, while \emph{mice-norm.nob} meets \ref{distregressionmethod} and \ref{shifts}. A widely-used method in a variety of fields is the \emph{missForest} of \cite{stekhoven_missoforest}. In this method, $X_j$ is regressed on $X_{-j}$ with a Random Forest (RF, \cite{breiman2001random}) and then imputed with the conditional mean in each iteration. As such, missForest, only meets \ref{flexible}. In contrast, \emph{mice-cart} and \emph{mice-RF} \citep{CARTpaper0, CARTpaper1} use one or several trees respectively, but sample from the leaves to obtain the imputation of $X_j$, approximating draws from the conditional distribution. Thus these methods satisfy \ref{distregressionmethod}, in addition to \ref{flexible}. As such, they may be inheriting the accuracy of missForest, while providing draws from the conditional distribution. However, they are ultimately not designed for the task of distributional regression, using a splitting criterion that is tailored to estimate the conditional expectation. To this end, the Distributional Random Forest (DRF) was recently introduced in \cite{DRF-paper}, with a splitting criterion adapted to conditional distribution estimation. We thus define a new imputation method, denoted \emph{mice-DRF}, that regresses $X_j$ onto $X_{-j}$ and then imputes by sampling from the distributional regression estimator. As DRF is a forest-based distributional method, mice-DRF meets \ref{distregressionmethod} and \ref{flexible}. However, as any local averaging estimate (kernel methods, tree-based methods, nearest neighbors), DRF still generalizes poorly outside of the training set \citep[see, e.g.,][]{malistov2019gradient}, i.e. Requirement \ref{shifts} is not met. Table \ref{summarytable2} summarizes the properties met by the different MICE methods considered in this paper.

Figure \ref{fig:Example2} illustrates the behavior of different imputation strategies for Example \ref{Example2}. As the Gaussian imputation fits a regression in pattern $m_1$ and then draws from a conditional Gaussian distribution given the estimated parameters, it is the ideal method in this setting and indeed captures the distribution very well. For the nonparametric methods, DRF, as a distributional method, performs better than mice-cart. However, it still fails to deal with the covariate shift, centering around 2, when it should center around 5. 

Thus, while previous analysis suggests that forest-based methods such as mice-cart, mice-RF, and likely also mice-DRF may be some of the most successful methods currently available, finding an (FCS) imputation method that satisfies \ref{distregressionmethod}--\ref{shifts} is still an open problem. However, these three ideal properties might help guide the development of new imputation methods. As each new imputation method should be carefully tested against the wealth of imputation methods that could be considered, including joint modeling approaches such as GAIN \citep{GAIN}, the ability to benchmark imputation methods is crucial. Thus, we now turn to the question of how to evaluate the performance of different imputation strategies. Requirement \ref{distregressionmethod} suggests that distributional distances or scores should be used instead of classic predictive metrics such as RMSE. Recall that $\PX$ is the marginal distribution of $X$ and assume that a sample from $\PX$ is observed, i.e. the complete data is available, as is the case in benchmarking studies. In order to evaluate the performance of an imputation strategy that produces a distribution $H$, we compute the (negative) energy distance between imputed and real data:
\begin{align*}
    \tilde{d}^2(H,\PX)&=2 \E_{\substack{X \sim H\\Y \sim \PX}}[ \| X-Y \|_{2}] - \E_{\substack{X \sim H\\ X' \sim H}}[ \| X-X' \|_{2}] \nonumber \\
   & -\E_{\substack{Y \sim \PX\\ Y' \sim \PX}}[ \| Y-Y' \|_{2}],
\end{align*}
 where $\| \cdot \|_{2}$ is the Euclidean metric on $\R^d$. Given samples from $\PX$ and a sample of imputed points this can be readily estimated, see e.g., \cite{EnergyDistance}. 
\begin{table}[]
\centering
\begin{tabular}{@{}llll@{}}
\toprule
Method                                  & \ref{distregressionmethod}                                            & \ref{flexible}                                            & \ref{shifts}                                                                                                                                \\ \midrule
\multicolumn{1}{|l|}{missForest}        & \multicolumn{1}{l|}{}                          & \multicolumn{1}{l|}{\checkmark} & \multicolumn{1}{l|}{}                                        \\ \midrule
\multicolumn{1}{|l|}{mice-cart}         & \multicolumn{1}{l|}{\checkmark} & \multicolumn{1}{l|}{\checkmark} & \multicolumn{1}{l|}{}                           \\ \midrule
\multicolumn{1}{|l|}{mice-RF}           & \multicolumn{1}{l|}{\checkmark} & \multicolumn{1}{l|}{\checkmark} & \multicolumn{1}{l|}{}                                                 \\ \midrule
\multicolumn{1}{|l|}{mice-DRF}          & \multicolumn{1}{l|}{\checkmark} & \multicolumn{1}{l|}{\checkmark} & \multicolumn{1}{l|}{}                            \\ \midrule
\multicolumn{1}{|l|}{mice-norm.nob}     & \multicolumn{1}{l|}{\checkmark} & \multicolumn{1}{l|}{}                          & \multicolumn{1}{l|}{\checkmark}   \\ \midrule
\multicolumn{1}{|l|}{mice-norm.predict} & \multicolumn{1}{l|}{}                          & \multicolumn{1}{l|}{}                          & \multicolumn{1}{l|}{\checkmark}   \\ \bottomrule
\end{tabular}
\caption{Properties \ref{distregressionmethod}--\ref{shifts} met by different imputation methods. Following the naming convention of the \texttt{mice} \textsf{R} package, ``mice-norm.nob'' refers to the Gaussian imputation, while ``mice-norm.predict'' refers to the regression imputation.}
\label{summarytable2}
\end{table}

\begin{figure}
    \centering
    \includegraphics[width=1\columnwidth]{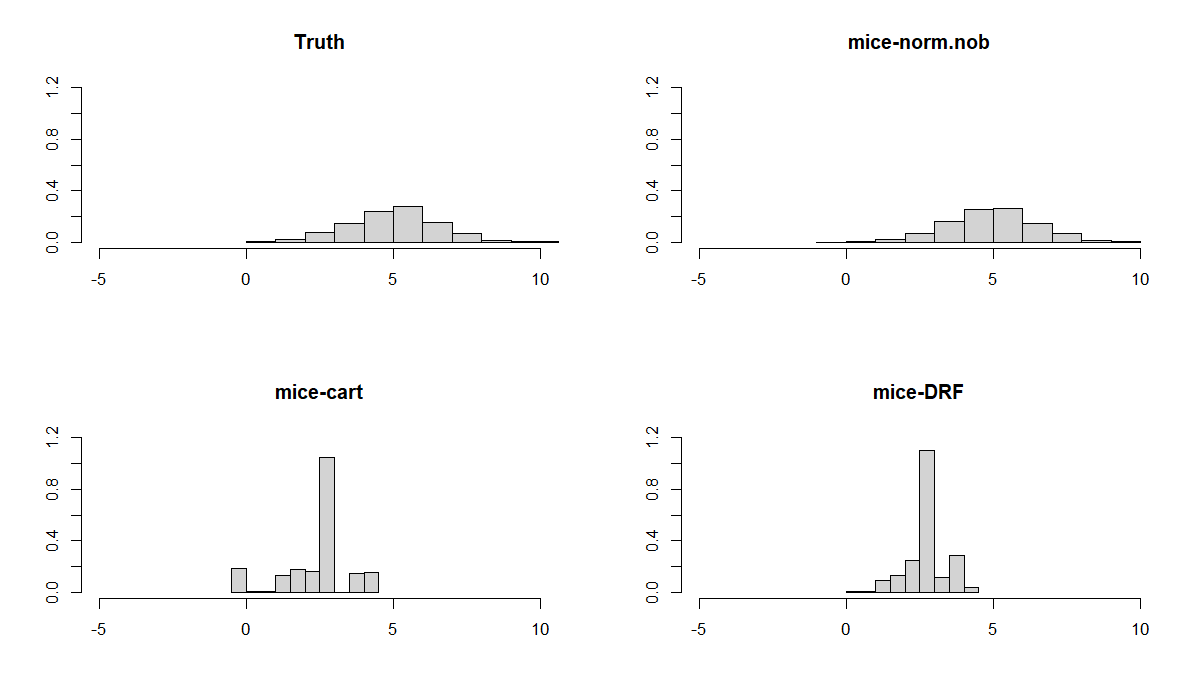}
    \caption{The true distribution against a draw from different imputation procedures for imputing $X_1$ in Example \ref{Example2}.}
    \label{fig:Example2}
\end{figure}

\section{Numerical Illustration} \label{Sec_Empirical}


The goal of this section is to illustrate the concepts discussed in this paper on both simulated and real data, that is:
\begin{itemize}
    \item We study Example \ref{interesting_new_Example_adaptation} with a rather difficult MAR mechanism, that is not \ref{EMAR} and where it is unclear whether $\PX$ can be recovered, and show empirically that FCS imputation recovers $\PX$ well.
    \item We illustrate why treating imputation as a distributional prediction problem is crucial, in particular, how methods like missForest can fail on downstream tasks.
    \item Following this line of thinking, we (i) introduce mice-DRF for distributional imputation and (ii) use the energy distance to score imputation quality.
\end{itemize}

We note that for the new mice-DRF, we essentially implement the mice-RF algorithm described in \cite{CARTpaper1}, with the traditional Random Forest exchanged by the Distributional Random Forest. The \texttt{mice} package provides a very convenient interface through which new regression methods can be added to the mice routine. An \textsf{R}-package implementing this method can be found on \url{https://github.com/KrystynaGrzesiak/miceDRF}.

\paragraph{Imputation methods.}  We empirically evaluate the performances of the following FCS methods
\begin{itemize}
    \item mice-cart
    \item mice-DRF
    \item missForest
    \item regression imputation, named mice-norm.predict (see \textsf{R}-package \texttt{mice} \citep{mice})
    \item Gaussian imputation, named mice-norm.nob (see \textsf{R}-package \texttt{mice} \citep{mice}) 
\end{itemize}
We also compare two deep learning strategies
\begin{itemize}
    \item GAIN \citep{GAIN} 
    \item MIWAE \citep{MIWAE}.
\end{itemize}
All methods are used with their default hyperparameter values. We note that this should not be meant to be an exhaustive list, but simply a collection of imputation methods to illustrate the discussions of this paper.

\paragraph*{Evaluation.} 
To evaluate the imputation methods, we calculate the (negative) energy distance between the true and imputed data sets, using the \texttt{energy} \textsf{R}-package \citep{energypackage}. To illustrate the discussion in this paper, we also add the negative RMSE. In all examples, we standardize the scores over the 10 repetitions to lie in $(-1,0)$.

\subsection{Example \ref{interesting_new_Example_adaptation}}

In this section we consider Example \ref{interesting_new_Example_adaptation}, albeit with three instead of one independent variable added. That is, we consider 3 patterns:
\begin{align*}
    m_1 = (0, 0, 0, 0, 0),\ \ m_2 = (0, 1, 0, 0, 0),\ \ m_3 = (1, 0, 0, 0, 0),
\end{align*}
with
\begin{align*}
       &\Prob(M=m_1 \mid X=x)  = (x_1+x_2)/3\\
    &\Prob(M=m_2 \mid X=x)= (2-x_1)/3\\
    &\Prob(M=m_3 \mid X=x)  = (1-x_2)/3.
\end{align*}

We draw $n=5000$ i.i.d. copies of $(X,M)$, apply the missingness mechanism to obtain $(X^*, M)$, impute and then compare the imputation to the complete data using the energy distance. Figure \ref{fig:Application_0_Scores} shows the results, with the ``true'' imputation, sampling from the correct conditional distributions included. As expected, the true imputation is ranked highest in terms of energy distance, closely followed by mice-cart mice-DRF and mice-norm.nob. In fact, though this is not visible in Figure \ref{fig:Application_0_Scores}, the energy distance between imputation and truth is of the same order for mice-DRF, mice-cart and the true imputation, indicating that the true underlying distribution is well recovered by mice-DRF and mice-cart, despite the complex setting. Despite this success all three distributional mice imputations attain the worst score in terms of (negative) RMSE. Interestingly, missForest, being an FCS method that imputes only conditional expectations, still ranks higher than the joint methods GAIN and MIWAE.

To further demonstrate how remarkable the strong performance of (distributional) FCS imputation methods is in this example, consider a typical M-Estimation task, such as estimation of (conditional) quantiles. Clearly, complete case analysis, i.e., only considering $M=m_1$ will lead to a biased estimate in this example. Instead, M-Estimation can be used by considering the observed parts of each pattern, see e.g., \cite{Mestimatormissingvalues} or Appendix \ref{Sec_likelihoodignorarbility}. In fact, a special case of this is the classical Maximum Likelihood estimator under missingness introduced in \cite{Rubin_Inferenceandmissing}. However, contrary to MLE this approach is not generally consistent for M-Estimators outside of MCAR \citep{Mestimatormissingvalues}. To achieve consistent estimation, one might consider propensity weighting methods that reweigh the M-Estimation problem using estimates of $\Prob(M=m \mid x)$, (see, e.g., \cite{inverseweightingoverview, Shpitser_2016, MARinverseweighting, CANTONI2020, Malinsky2022} among others). However, as outlined in Example \ref{interesting_new_Example_adaptation}, there is no clear way to recover $\Prob(M=m \mid x)$ for any $m \in \{m_1, m_2, m_3\}$ without iterative imputation in this example. Finally, we note that imputing by a non-distributional method like missForest also should lead to a bias. Figure \ref{fig:quantileResults} demonstrates these points for a very simple M-Estimator: the 0.1-quantile of $X_1$ over 20 iterations for (1) the mice-cart and mice-DRF imputations, (2) the missForest imputation and (3) M-Estimation considering the observed variables in each pattern, which here simply boils down to calculating the quantile of of all observed $X_1$, $X_1 \mid M_1=0$. First, for (3) there is a clear bias stemming from the fact that the 0.1 quantile of $X_1 \mid M_1=0$ is approximately $0.106$ instead of $0.1$, as shown in Appendix \ref{Sec_AdditionalexampleDetails}. Second, the ordering of the imputation methods reflects the ordering found in Figure \ref{fig:Application_0_Scores}, with missForest having the highest bias. This discussion serves to illustrate that in this MAR example, distributional FCS imputation achieves an estimation accuracy that is not trivial to achieve with other methods.

\begin{figure}
    \centering
    \includegraphics[width=1\linewidth]{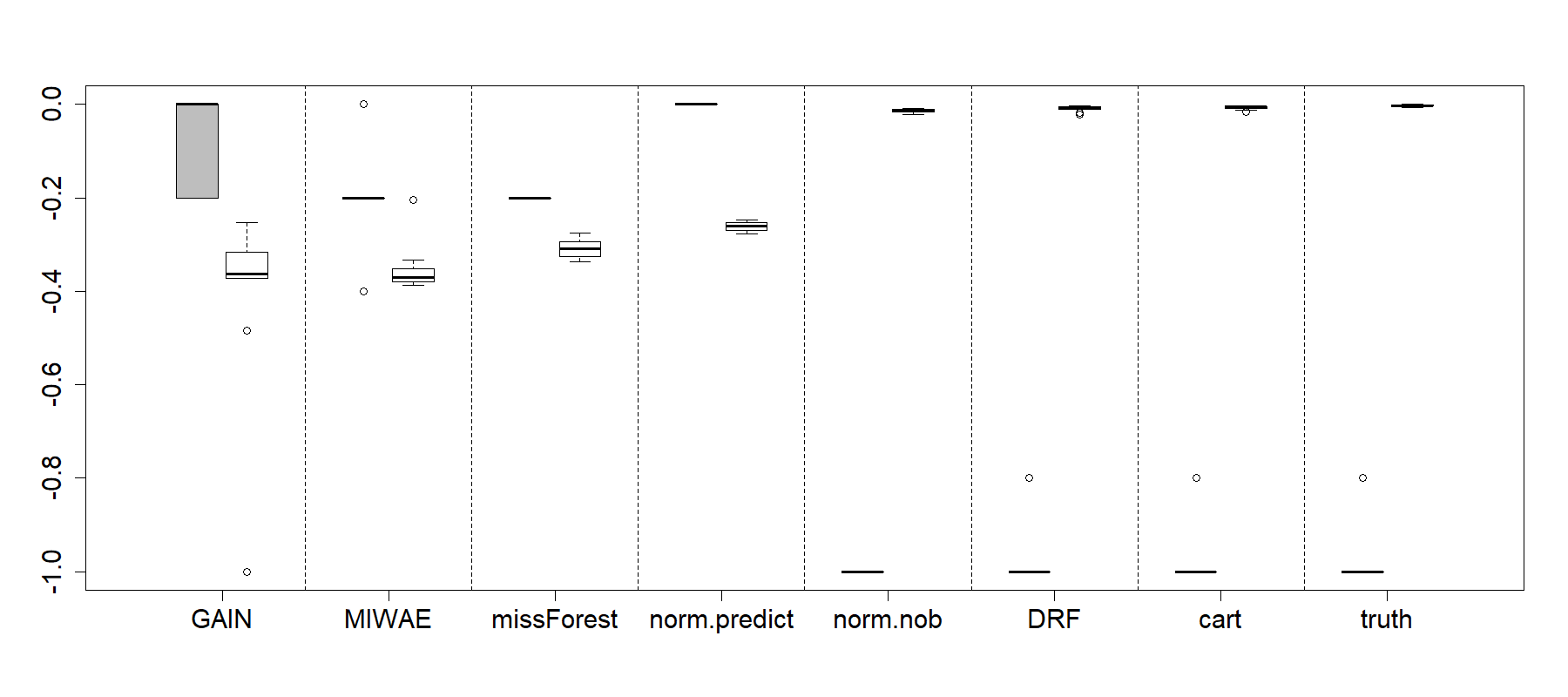}
    \caption{Standardized scores (the higher the better) for Example \ref{interesting_new_Example_adaptation}. Methods are ordered according to the negative energy distance. For each method, negative RMSE (gray, left) and the negative energy distance (white, right) are shown.}
    \label{fig:Application_0_Scores}
\end{figure}

\begin{figure}
    \centering
    \includegraphics[width=1\linewidth]{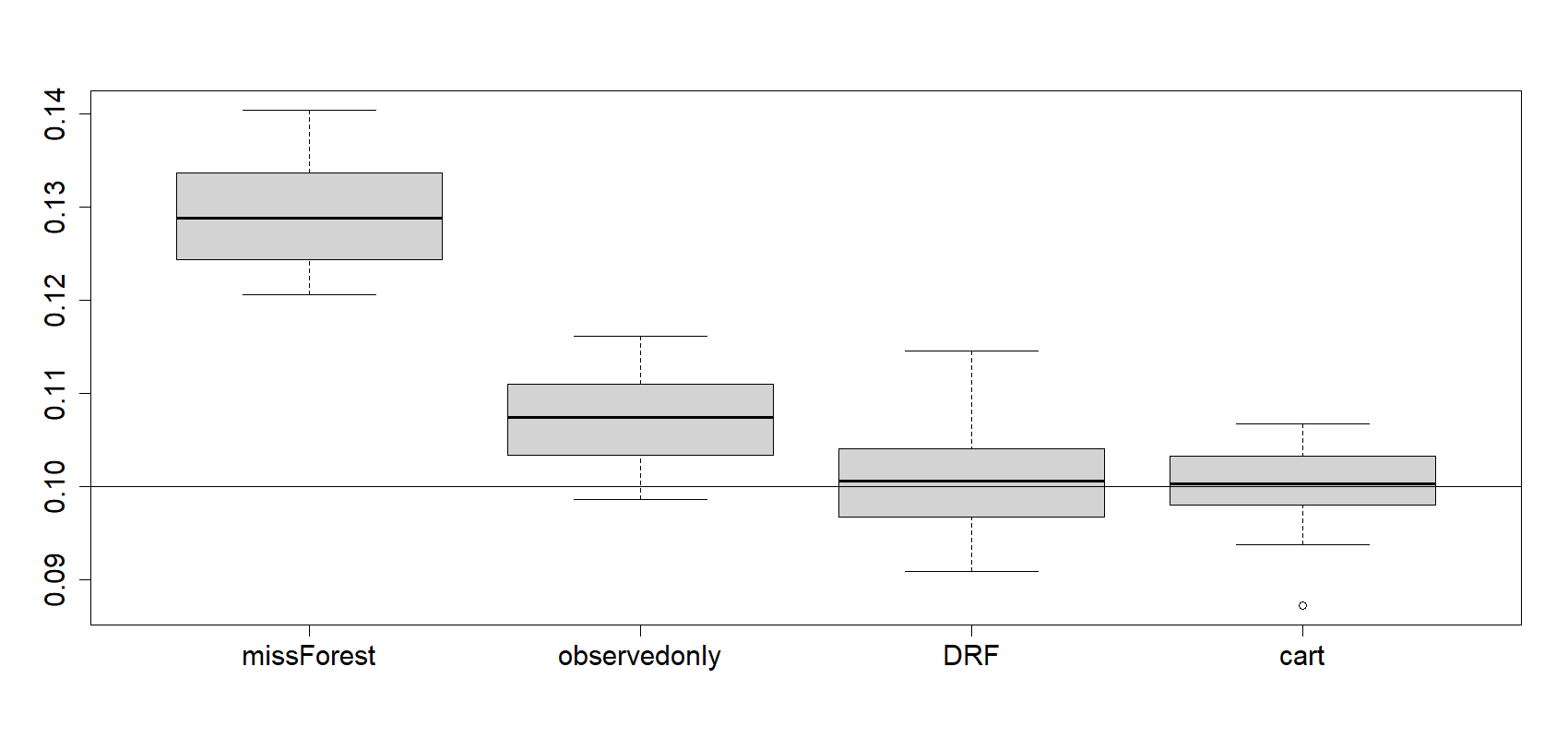}
    \caption{Estimation of the 0.1-Quantile of $X_1$ using (1) the mice-cart and mice-DRF imputations, (2) the missForest imputation and (3) the quantile of of all observed $X_1$, $X_1 \mid M_1=0$.}
    \label{fig:quantileResults}
\end{figure}

\subsection{Air Quality Data}\label{Sec_Airquality}

Imputation is usually not done in isolation, but to apply one or several downstream tasks in a second step. In the last section, we considered the estimation of a univariate quantile. In this section, we would like to compare the imputations on a more complex downstream task, namely calculating the Wasserstein distance (see e.g., \cite{ComputationalOT}), to another fully observed data set. Calculating distributional distances such as the Wasserstein distance between datasets is a very common task in Machine Learning, used for instance in testing \citep{Wassersteintesting}, GANs \citep{WassersteinGAN}, domain adaptation \citep{WassersteinDomainadaptation} and variational inference \citep{WassersteinVariationalInference}. We use the air quality data set obtained from \url{https://github.com/lorismichel/drf/tree/master/applications/air_data/data/datasets/air_data_benchmark2.Rdata}. This is a pre-processed version of the data set that was originally obtained from the website of the Environmental Protection Agency (\url{https://aqs.epa.gov/aqsweb/airdata/download_files.html}). For a detailed description of the data set, we refer to \cite[Appendix C.1]{DRF-paper}. The data set contains a total of 50'000 observations with 11 dimensions, 6 numerical and 5 binary. The goal of this example is to consider a simple MCAR mechanism with a relatively high proportion of missing values in the first six dimensions of numerical variables. That is, we simply let $M_1, \ldots, M_6$ be independent Bernoulli random variable, each with probability of missingness $0.3$, and $M_j=0$ for $j=7\ldots, 11$.

The wealth of data allows us to redraw two data set of 2'000 observations $B=10$ times to get an idea of the variation. We then introduce the MCAR missingness in one of the data sets and impute. In a second step, we calculate the Wasserstein distance between the imputed and the other (complete) data set and compare this to the Wasserstein distance computed with complete data. Figure \ref{fig:Application_1_Scores} shows the (standardized) energy distance, RMSE and the absolute difference in the Wasserstein distance obtained from the imputation and the complete data. The ordering induced by the energy distance matches well with the performance on the Wasserstein downstream task. Both show mice-cart and mice-DRF first and GAIN last. In addition, both score missForest third and, in particular, better than norm.nob. This might be due to the fact that missForest, while predicting instead of drawing from a conditional distribution, still models the nonlinearities in the data relatively well, a feat the Gaussian-based norm.nob cannot achieve. This shows on a very common task that the choice of an imputation is crucial and that the energy distance might be a good measure for benchmarking.

Surprisingly in this example, mice-cart is quite successful in terms of RMSE. On the other hand, RMSE scores missForest again highest, simply because it estimates the conditional means well. This mirrors previous analysis on real data with artificially generated missing values that found missForest to perform well (see e.g., \cite{Waljee2013, RFimputationpaper, awesomebenchmarkingpaper} among others) and indicates that this might be entirely due to the use of RMSE.

\begin{figure}
    \centering
    \includegraphics[width=1\linewidth]{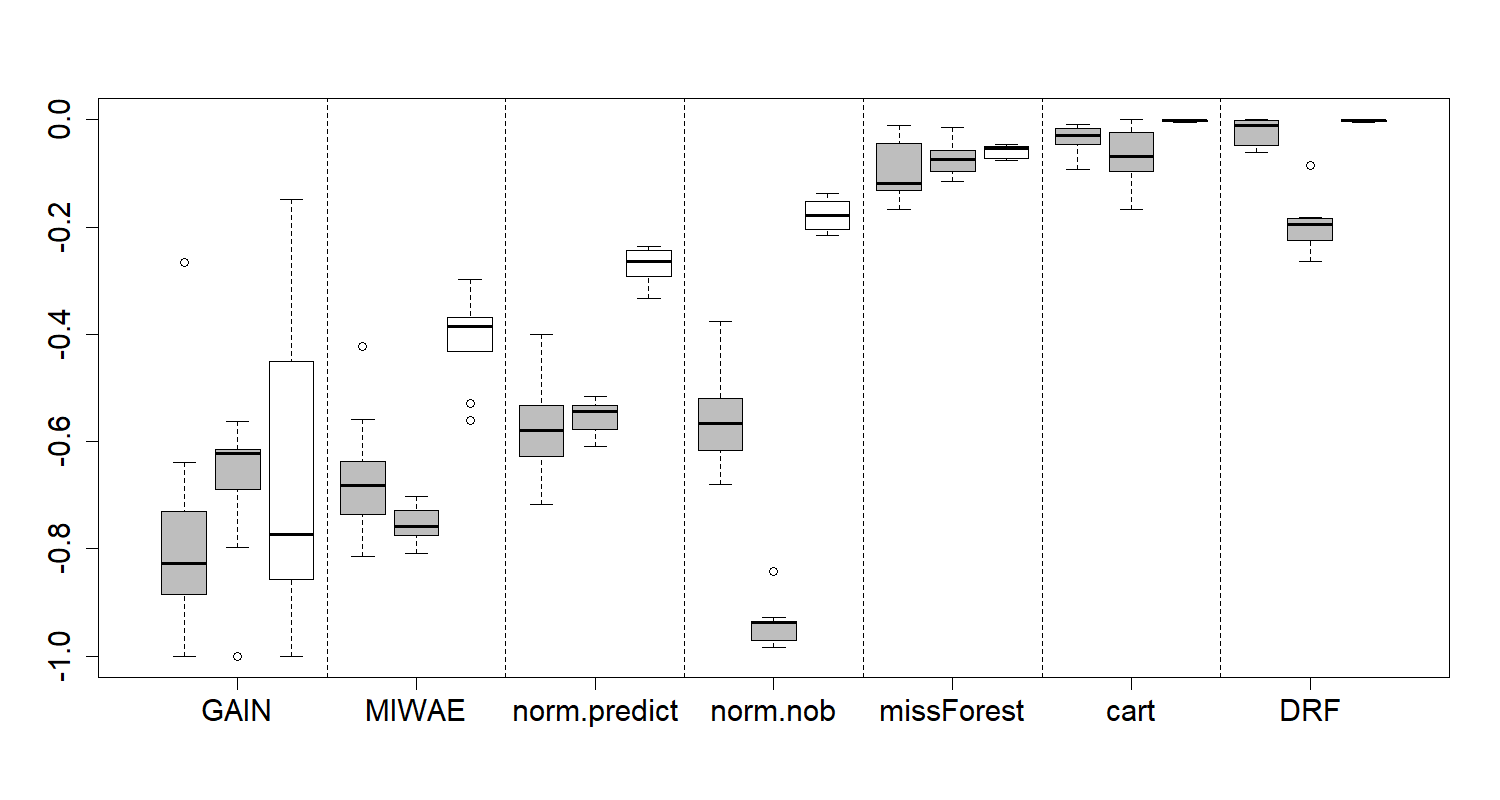}
    \caption{Standardized scores (the higher the better) for the air quality data example. Methods are ordered according to the negative energy distance. For each method, the negative energy distance (white, right), RMSE (gray, middle) and negative difference to the Wasserstein distance calculated on full data (dark gray, left) are shown.}
    \label{fig:Application_1_Scores}
\end{figure}

\subsection{Summary of Results}

We first note that our analysis is by no means a thorough comparison of imputation methods, but simply a numerical illustration of the points made in this paper. As such, the four examples considered in this section, as well as the analysis in Appendix \ref{Sec_GAINMIWAE}, indicate that:
\begin{itemize}
\item[(I)] Complex (conditional) distribution shifts are possible under \ref{PMMMAR} and methods that rely on stronger assumptions such as \ref{EMAR} may fail even in seemingly simple examples. In line with our results, FCS imputation methods that meet (1) and (2), such as mice-DRF and mice-cart, are able to recover the distributions remarkably well.
\item[(II)] Using the energy distance to evaluate imputation method reliably manages to identify the ``best'' imputation in our examples. This is in contrast to RMSE, which often results in an undesirable ordering.
\item[(III)] However, none of the methods is able to reliably deal with strong distributional shifts and nonlinearity, showing once again that better imputation methods need to be found.

\end{itemize}



\section{Discussion}\label{Sec_Discussion}

This paper gives a more systematic discussion of FCS imputation. We analyse and generalize the MAR condition for imputation and, based on this analysis, propose three essential properties an ideal imputation method should meet, as well as a principled way of ranking imputation methods. We conclude with four important points. 

\paragraph*{RMSE should not be used.} RMSE is not a sensible way of evaluating imputations. Dropping RMSE as an evaluation method likely has important implications. For instance, the recommendation of papers to use single imputation methods such as k-NN imputation \citep{knn_adv1} or missForest \citep{Waljee2013, Tang2017-on} appears to rest entirely on the use of RMSE. Even well-designed paper benchmarking imputation methods such as \cite{awesomebenchmarkingpaper} use RMSE. Nonetheless, there appear to be only a handful of recent papers that at least consider different evaluation methods, for instance, \cite{OTimputation,RFimputationpaper, wang2022deep}. Indeed, the problems of RMSE appear to be rediscovered in different fields. For instance,  \cite{RFimputationpaper}  empirically emphasize that, while missForest achieves the smallest RMSE, parameter estimations in linear regression are severely biased. Similarly, \cite{wang2022deep} discusses some problems with using RMSE in the machine learning literature. In contrast, GAN-based approaches recognize the objective of drawing imputations from the respective conditional distributions and naturally use the pattern-mixture modeling approach. However, despite having the right objective, these papers again use RMSE to compare the imputation quality of their method to competitors \citep[see, e.g.,][]{GAIN}.

\paragraph*{New imputation methods are needed.} The problem of imputation is by no means solved. Though there is a set of promising imputation methods with mice-cart, mice-RF, and mice-DRF, there is room for improvement, especially concerning the ability to deal with covariate shifts. In particular, Appendix \ref{Sec_Gaussmixmodelnonlinar} shows an example with distribution shifts and nonlinear relationships for which all methods fail. In addition, when considering multiple imputation, we note that none of the studied nonparametric methods is able to include \emph{model uncertainty}. However, this would technically be needed for correct uncertainty quantification with multiple imputation using Rubin's Rules, see e.g., \cite{greatoverview}. Though mice-rf of \cite{CARTpaper1} attempts to account for model uncertainty using several trees, this is only a heuristic solution.

\paragraph*{Further MAR generating mechanisms may need to be considered.}
It appears intuitive that the combination of distributional shifts and nonlinear relationships is widespread in real data. At the same time, the success of forest-based methods such as missForest and mice-cart in benchmark papers suggests that current ways of introducing MAR might not produce enough distribution shifts in general. For instance, \cite{ImputationScores} analyzed a range of data sets using the standard MAR mechanism of the ampute function implementing the procedure of \cite{ampute}, as we did in Section \ref{Sec_Airquality}. Their analysis also showed mice-cart consistently in first place. Thus, tweaking the approach of \cite{ampute} to produce MAR data with distribution shifts, might be an avenue for further research. In this context, the \ref{CIMAR} assumption might be useful, as MAR examples with distribution shifts can easily be generated.\\


\begin{appendix}

\section{Additional Empirical Considerations}\label{Sec_GAINMIWAE}

\subsection{Uniform Example}\label{PMMMARExample}

In Example \ref{Example1_first} in Section \ref{Sec_Notation} we introduced another simple example for which \ref{PMMMAR} holds but \ref{EMAR} does not. We consider here a similar setting: We assume $X=(X_1, \ldots, X_5)$ are independently uniformly distributed on $[0,1]$. We further specify three patterns
\begin{align*}
    m_1 = (0, 0, 0, 0, 0),\ \ m_2 = (0, 1, 0, 0, 0),\ \ m_3 = (1, 0, 0, 0, 0),
\end{align*}
with
\begin{align*}
        &\Prob(M=m_1 \mid X=x) = x_1/3\\
    &\Prob(M=m_2 \mid X=x) = 2/3-x_1/3\\
    &\Prob(M=m_3 \mid X=x)  = 1/3.
\end{align*}
This clearly meets \ref{PMMMAR}. However, it holds that, 
\begin{align*}
    &p(x_1 \mid X_2=x_2, X_3=x_3, M=m_1)\\
    &=p(x_1) \\
    &\neq p(x_1 \mid X_2=x_2, X_3=x_3, M=m_3)\\
    &= 2x_1 p(x_1),
\end{align*}
and \ref{EMAR} does not hold. In particular, while simply subsampling the observed $X_1$ would be enough to impute well in this example, trying to learn $p(x_1 \mid X_2=x_2, \ldots X_5=x_5)$ in the fully observed pattern will lead to a biased imputation, as shown in Figure \ref{fig:Example1}.\footnote{We note that in Example \ref{Example1} we consider only three variables. However, adding two more independent variables helps stabilize the imputation methods and gives more consistent results in our experiments, especially for GAIN and MIWAE.}

We draw $n=5000$ i.i.d. copies of $(X, M)$ and first study the imputation of $X_1$ for mice-cart and GAIN in Figure \ref{fig:X1imputation}. Interestingly, GAIN appears to impute in a way that one would expect from a method requiring \ref{EMAR}. Although this is not perfect, the boxplot of $X_1$ for the GAIN imputation seems closer to the density $2x_1 p(x_1)$, shown in the middle of Figure \ref{Example1}, than the correct $p(x_1)$. Indeed \cite{directcompetitor2} prove that GAIN identifies the correct distributions under \ref{RMAR}, which they (erroneously) call MAR. Moreover, \cite{directcompetitor1} state that ``Empirically, we often find GAN imputations working quite well for missing data under MAR or even MNAR...''. This example shows that this is not generally true and may hint at the fact that in these cases the mechanism was actually \ref{EMAR} or even \ref{CIMAR}. The conditional GAN (CGAN) of \cite{directcompetitor1}, which is shown to identify the true conditional distributions under \ref{EMAR} will likely suffer the same issues. On the other hand, the imputation of mice-cart seems very reasonable. This illustrates again both the confusion about the term ``MAR'' in the literature as well as the advantage FCS imputation has over other approaches in case of \ref{PMMMAR}.

Figure \ref{fig:Application_2_Scores} shows the negative energy distance and RMSE for this example. While there is some considerable variation, especially for GAIN, the negative energy distance recognizes mice-cart, mice-DRF and norm.nob as the best methods.\footnote{Since norm.nob draws from a Gaussian, this might be surprising, however a quick plot reveals that a Gaussian with mean 0.5 and the right variance gives a close approximation to a uniform distribution on [0,1].} RMSE, on the other hand, puts MIWAE and missForest in the first and second place and assigns poor scores to mice-DRF and mice-cart. Although this may be reasonable for MIWAE, missForest essentially just imputes the mean of $X_1$ and $X_2$. Thus, it entirely misrepresents the variation in the distribution, which clearly shows the misleading effect RMSE can have.

\begin{figure}
    \centering
    \includegraphics[width=1\linewidth]{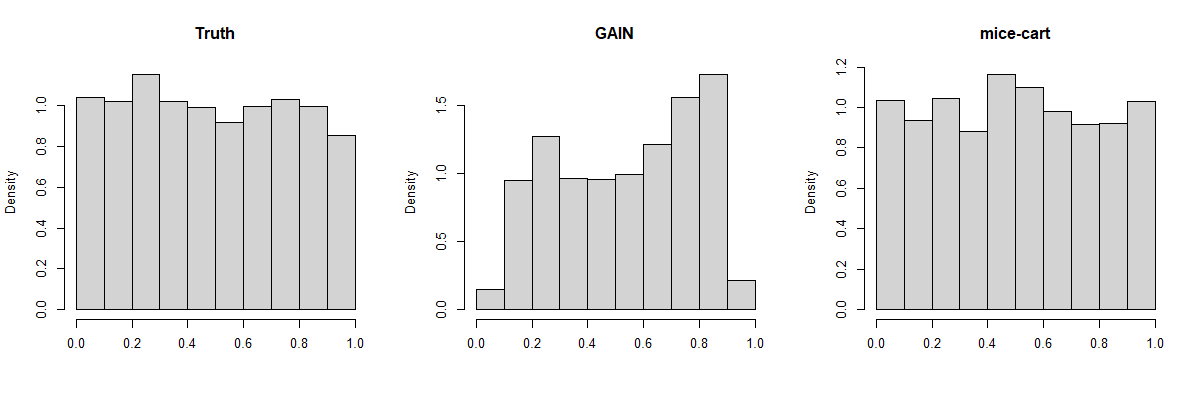}
    \caption{Imputation of $X_1$.}
    \label{fig:X1imputation}
\end{figure}

\begin{figure}
    \centering
    \includegraphics[width=1\linewidth]{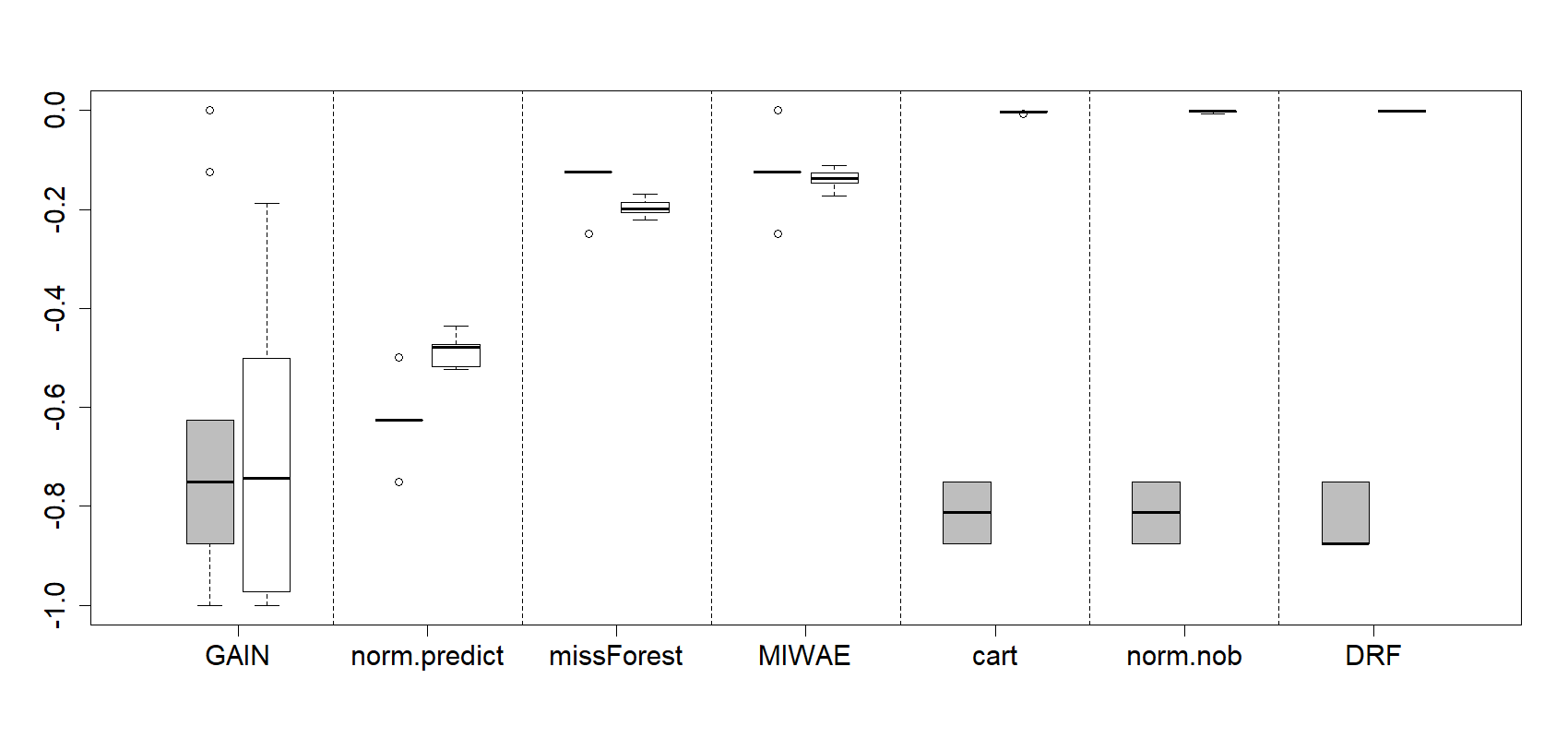}
    \caption{Standardized scores (the higher the better) for the uniform example. Methods are ordered according to the negative energy distance. For each method, negative RMSE (gray, left) and the negative energy distance (white, right) are shown.}
    \label{fig:Application_2_Scores}
\end{figure}

\subsection{Gaussian Mixture Model} \label{Sec_Gaussmixmodel}


\begin{figure}
    \centering
    \includegraphics[width=1\linewidth]{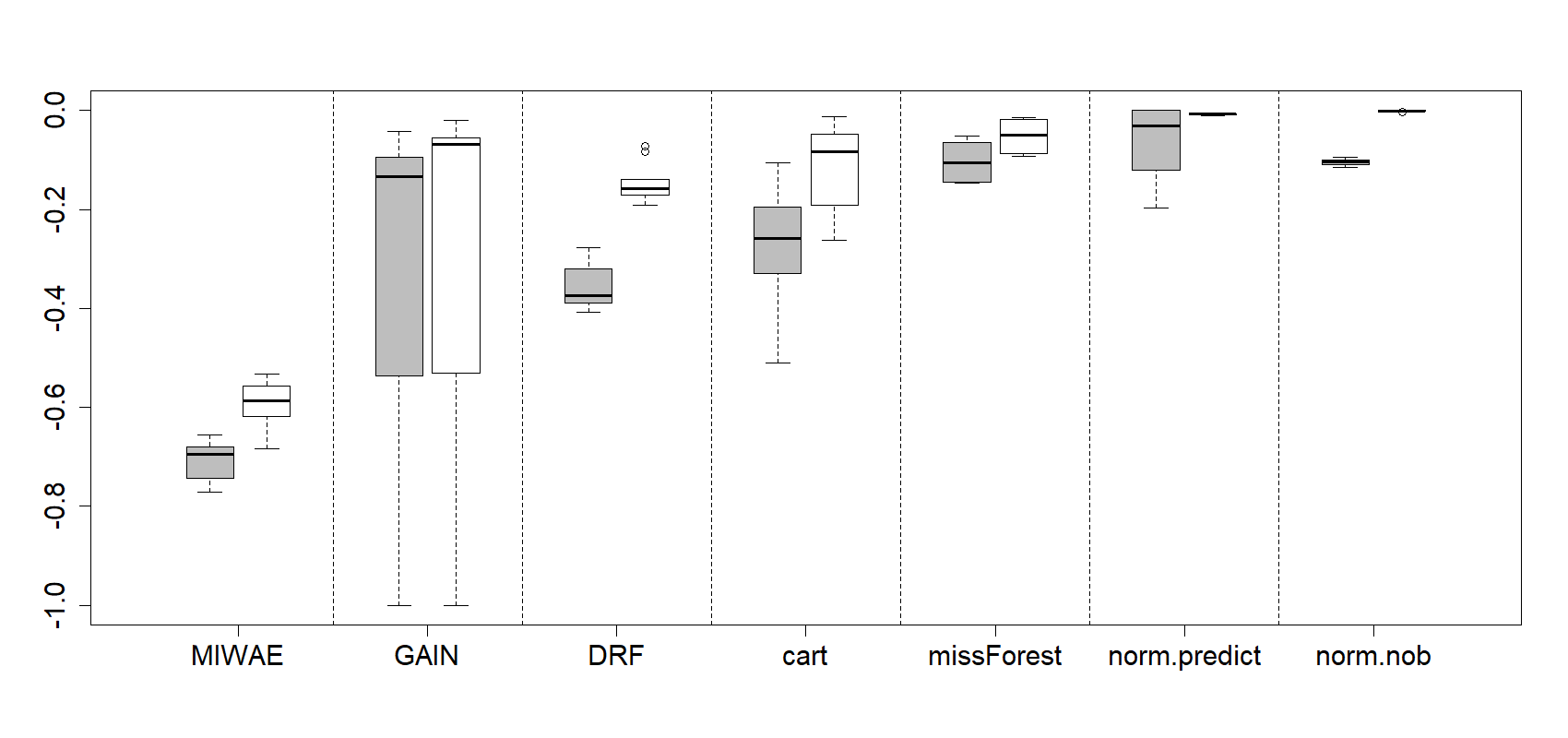}
    \caption{Standardized scores (the higher the better) for the Gaussian mixture model. Methods are ordered according to the negative energy distance. For each method, negative RMSE (gray, left) and the negative energy distance (white, right) are shown.}
    \label{fig:Application_3_Scores}
\end{figure}

We first turn to a Gaussian Mixture model to be able to put more emphasis on distribution shifts under MAR. In particular, we simulate the distribution shift of Example \ref{Example2} in higher dimensions. We take $d=6$ and 3 patterns,
\begin{align*}
    m_1 &= (1, 0, 0, 0, 0 ,0)  \\
    m_2 &= (0, 1, 0, 0, 0 ,0) \\
    m_3 &= (0, 0, 1, 0, 0 ,0)
\end{align*}
The last three columns of fully observed variables, denoted $X_{O}^*$, are all drawn from three-dimensional Gaussians with means $(5,5,5)$, $(0,0,0)$ and $(-5,-5,-5)$ respectively, and a Toeplitz covariance matrix $\Sigma$ with $\Sigma_{i,j}=0.5^{|i-j|}$. Thus there are relatively strong mean shifts between the different patterns. To preserve MAR, the (potentially unobserved) first three columns are built as 
\begin{align*}
    X_{O^c}^* = \mathbf{B}  X_{O}^* + \begin{pmatrix}
        \varepsilon_1\\
        \varepsilon_2\\
        \varepsilon_3
    \end{pmatrix} ,
\end{align*}
 where $\mathbf{B}$ is a $3 \times 3$ matrix of coefficients, $(\varepsilon_1, \varepsilon_2, \varepsilon_3)$ are independent $N(0,4)$ random errors and $O=\{4,5,6\}$ is the index of fully observed values. The increased variance increases the overlap between the distributions compared to Example \ref{Example2}. For $\mathbf{B}$ we copy the vector $(0.5,1,1.5)$ three times, such that $\mathbf{B}$ has identical rows.
The data is thus Gaussian with linear relationships, but there is a strong distribution shift between the different patterns. However, this distributional shift only stems from the observed variables, leaving the conditional distributions of missing given observed unchanged, as in Example \ref{Example2}. Consequently, it can be shown that the missingness mechanism meets \ref{CIMAR} and is thus MAR.

For each pattern, we generate 500 observations, resulting in $n=1.500$ observations and around $17\%$ of missing values. In this example, we expect that the imputation able to adapt to shift in covariates will perform well, even if they are not able to catch complex dependencies between variables. Indeed, we note that $P$ corresponds to the Gaussian imputation (mice-norm.nob) with the (unknown) true parameters. As such, a good evaluation method should rank mice-norm.nob highest. In contrast, the forest-based scores should have the worst performance here, as they may not able to deal properly with the distribution shift. 
On the other hand, they might still be deemed better than mice-norm.predict, which only imputes the regression prediction.

Results for the (standardize) negative energy distance and RMSE are given in Figure \ref{fig:Application_3_Scores}. The ordering induced by the negative energy distance is as expected, with mice-norm.nob in first place, and forest-based methods struggling. However, the increased variance appears to increase the performance of forest-based methods and missForest performs surprisingly well in terms of negative energy distance, though still markedly worse than norm.nob and mice-norm.predict. Finally, even though this example is \ref{CIMAR}, both MIWAE and GAIN perform worse than the forest-based methods in terms of energy distance, indicating that they also struggle with the distribution shift. On the other hand, GAIN performs quite well in some cases, but suffers from an enormous variance. This may be stabilized with better parameter tuning. RMSE in turn, ranks mice-norm.predict as the best method. Thus, RMSE is again unable to identify the method that best replicates $P$ and instead chooses the one that best estimates the conditional expectation.

\subsection{Mixture Model with Nonlinear Relationships}\label{Sec_Gaussmixmodelnonlinar}

We now turn to a more complex version of the model in Section \ref{Sec_Gaussmixmodel} to add nonlinear relationships to the distributional shifts. This example should indicate that the search for successful imputation methods is by no means complete.

Using the same missingness pattern, and Gaussian variables $X_O$ we use a nonlinear function $f$ for the conditional distribution:
\begin{align}\label{nonlinearshiftexample}
    X_{O^c}^* =  f(X_{O}^*) + \begin{pmatrix}
        \varepsilon_1\\
        \varepsilon_2\\
        \varepsilon_3
    \end{pmatrix},
\end{align}
with
\begin{align*}
    &f(x_1,x_2,x_3)\\
    &=(x_3 \sin(x_1 x_2), x_2 \cdot \Ind\{x_2 > 0\}, \arctan(x_1) \arctan(x_2)).
\end{align*}
This introduces nonlinear relationships between the elements of $X_{O^c}^*$ and $X_O^*$, though the conditional distribution of $X_{O^c}^* \mid X_{O}^*$ is still Gaussian and the missingness mechanism is CIMAR. For each pattern, we generate 500 observations, resulting in $n=1'500$ observations and around $17\%$ of missing values.

In this example, the ability to generalize is important, and so is the ability to model nonlinear relationships. Accordingly, this is a very difficult example and there is no clear ideal method. Despite the distribution shift, mice-DRF performs best in terms of negative energy distance, directly followed by mice-cart. This relates to earlier findings in \cite{ImputationScores} and others that mice-cart might be one of the most promising methods. missForest again scores surprisingly high, though it performs markedly worse than mice-cart. GAIN is put last, but again displays very high variability. Finally, the RMSE is surprisingly close to the ranking of negative energy distance, though it again scores missForest highest.

\begin{figure}
    \centering
    \includegraphics[width=1\linewidth]{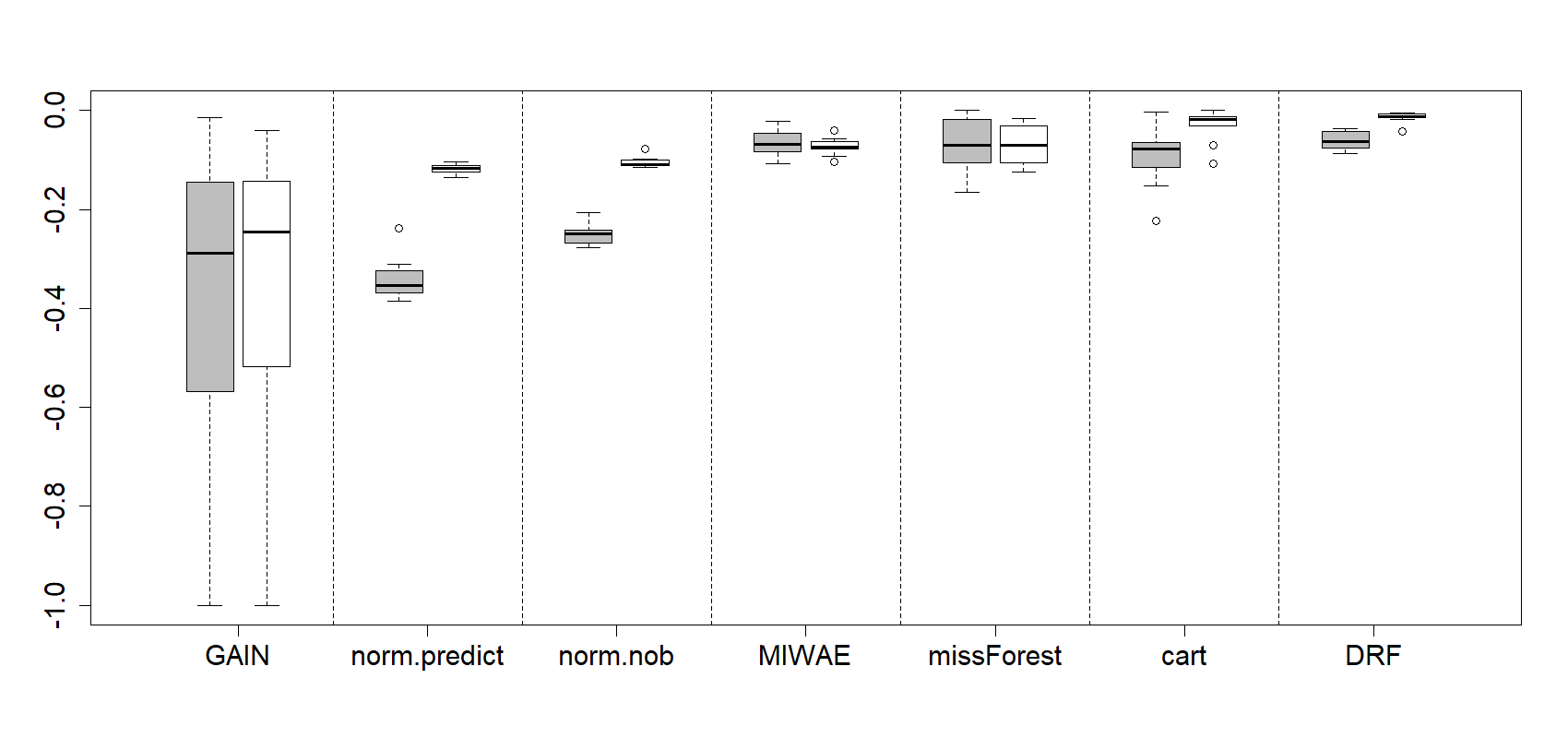}
    \caption{Standardized scores (the higher the better) for the nonlinear mixture model. Methods are ordered according to the negative energy distance. For each method, negative RMSE (gray, left) and the negative energy distance (white, right) are shown.}
    \label{fig:Application_4_Scores}
\end{figure}

\section{Proofs and Additional Results}

In this section, we provide additional results and collect the proofs of the results not shown in the main paper. We first provide details on Example \ref{interesting_new_Example} and \ref{interesting_new_Example_adaptation} in the main text.

\subsection{Details on Examples \ref{interesting_new_Example} and \ref{interesting_new_Example_adaptation}}\label{Sec_AdditionalexampleDetails}

Here we provide details on Example \ref{interesting_new_Example} and \ref{interesting_new_Example_adaptation}, whereby the latter is also used in Section \ref{Sec_Empirical}. In both cases, we use a Farlie-Gumbel-Morgenstern (FGM) copula ( see e.g., \cite{Nelsen2006}), that is, $X_1$, $X_2$ and $X_3$ are marginally uniform on [0,1], and
\begin{align*}
    \pX(x_1,x_2)&=1 + (2x_1 - 1)(2x_2 - 1) \Ind\{(x_1,x_2) \in [0,1]^2\}\\
     \pX(x_1 \mid X_2=x_2)&=1 + (2x_1 - 1)(2x_2 - 1) \Ind\{x_1 \in [0,1]\}\\
     \pX(x_2 \mid X_1=x_1)&=1 + (2x_1 - 1)(2x_2 - 1) \Ind\{x_2 \in [0,1]\}
\end{align*}
We can moreover derive that,
\begin{align*}
    \E[X_2 \mid X_1]=\frac{1}{2} + \frac{1}{3} \left(X_1-\frac{1}{2}\right)
\end{align*}
We then recall that for Example \ref{interesting_new_Example},
    \begin{align*}
        \mathcal{M}&=\{m_1,m_2, m_3, m_4\}\\
        &=\{(0,0,0), (0,1,0), (0,0,1), (1,1,0)\}
    \end{align*}
 and
       \begin{align*}
        &\Prob(M=m_1\mid X=x)=(x_1+x_2)/3\\ 
        &\Prob(M=m_2\mid X=x)= (1-x_1)/3 \\
        & \Prob(M=m_3\mid X=x)= (1-x_2)/3 \\ 
        &\Prob(M=m_4\mid X=x)= 1/3. 
    \end{align*}

For Example \ref{interesting_new_Example_adaptation}, we chose:
 $$\mathcal{M}=\{m_1,m_2, m_3\}=\{(0,0,0), (0,1,0), (1,0,0)\}$$ and
    \begin{align*}
        &\Prob(M=m_1\mid X=x)=(x_1+x_2)/3,\\
        &\Prob(M=m_2\mid X=x)=(2-x_1)/3\\
        &\Prob(M=m_3\mid X=x)=(1-x_2)/3.
    \end{align*} 

To detail the properties of the two examples, we will use the relation
\begin{align}\label{usefulrelation}
    &p( o(x,m)^c \mid o(X,m)=o(x,m), M \in L)  \\
    &=\pX( o(x,m)^c \mid o(X,m)=o(x,m)) \nonumber\\
    &\frac{\sum_{m' \in L}\Prob(M=m' \mid X=x)}{\sum_{m' \in L}\Prob(M=m' \mid o(X,m)=o(x,m))}\nonumber,
\end{align}
for any $L \subset \mathcal{M}$.

\paragraph*{Example \ref{interesting_new_Example}} Using the above relation, we immediately can verify that
\begin{align*}
    &p(x_1,x_2 \mid X_3=x_3, M=m_1)\\
    &=\pX(x_1,x_2 \mid X_3=x_3) (x_1+x_2)/2\\ 
    &p(x_1,x_2 \mid X_3=x_3, M=m_3)\\
    &=\pX(x_1,x_2 \mid X_3=x_3) (1-x_2)/2 
\end{align*}
and
\begin{align*}
    h^*(x_1 \mid X_2=x_2, X_3=x_3)=p(x_1 \mid X_2=x_2, X_3=x_3),
\end{align*}
as $\Prob(M=m_1 \mid X=x) + \Prob(M=m_2 \mid X=x)+\Prob(M=m_3 \mid X=x)=1/3$ by construction. We can then consider the permutation $\pi(j)=j$. Then $\Prob(M_1=1 \mid X=x)=\Prob(M=m_4 \mid X=x)=1/3$. Similarly, for the last element: $\Prob(M_3=1 \mid X=x, M_1=m_1, M_2=m_2)=\Ind\{m_1=0,m_2=0\}$. Finally for $M_2$,
\begin{align*}
    &\Prob(M_2=1 \mid X=x, M_1=m_1)\\
    &=\begin{cases}
        (1-x_1)/2, \text{ if } m_1=0\\
        1, \text{ if } m_1=1.
    \end{cases}
\end{align*}

\paragraph*{Example \ref{interesting_new_Example_adaptation}}
We first note that
    \begin{align*}
        &\Prob(M=m_1\mid X=x)=(x_1+x_2)/3,\\
        &\Prob(M=m_2\mid X=x)=(2-x_1)/3\\
        &\Prob(M=m_3\mid X=x)=(1-x_2)/3.
    \end{align*}
As such, we can verify that
\begin{align*}
    \Prob(M_1=1 \mid M_2=0, M_3=0, X=x)&=(1-x_2)/(1+x_1)\\
    \Prob(M_2=1 \mid M_1=0, M_3=0, X=x)&=(2-x_1)/(2+x_2).
\end{align*}
Note that both of these expressions depend on the missing variables. Thus, for any permutation $\pi$, $\Prob(M_{\pi(j)}=1 \mid \text{Pa}(M_{\pi(j)} )$ depends on $X_{\pi(j)}$ and \eqref{graphicalMAR} does not hold.

Finally, when considering the joint distribution of $(X_1,X_2)$ when $M_1=1$, we may use \eqref{usefulrelation} once again to obtain: 
\[
p(x_1,x_2 \mid M_1=0) = \pX(x_1,x_2) \frac{2}{5}(2+x_2).
\]
To obtain the distribution of $X_1 \mid M_1=0$, we integrate this expression over $x_2$, leading to
\begin{align*}
    &p(x_1 \mid M_1=0) \\
    &=2/5 \pX(x_1) (2+  \int x_2 \pX(x_2 \mid x_1) d x_2)\\
    &=2/5 \pX(x_1) (2+ \E[X_2 \mid X_1=x_1])\\
    &=2/5 \pX(x_1) (7+x_1)/3\\
    &=2/5 (7+x_1)/3 \Ind\{x_1 \in [0,1]\}.
\end{align*}
This distribution has cdf,
\begin{align*}
F(x_1)=\int_{0}^{x_1} \pX(\tilde{x}_1) d\tilde{x}_1=(14x_1 + x_1^2)/15.   
\end{align*}
Inverting this expression, we obtain the quantile,
\begin{align}
    Q(\alpha)=-7+\sqrt{49+15 \alpha}.
\end{align}
In particular, this is around $0.106$, when $\alpha=0.1$.

\subsection{Proofs}\label{Sec_Proofs}

As mentioned above, to avoid measurability issues, we assume that $p/\pX$ are such that $x \in \X$ implies $\pX(o(x,m')) > 0$ and $x \notin \X_{\mid m}^{m'}$ implies $p(o(x,m') \mid M=m)=0$ for all $m,m'$. Moreover, in the proofs we shorten $p(o^c(x,m) \mid o(X,m)=o(x,m))$ to $p(o^c(x,m) \mid o(x,m))$ and similarly with $\Prob(M=m \mid X=x)$ and other expression, for convenience.


We consider the first two results together:

\PMMMARzero*

\PMMMAR*

\begin{proof}
   
We show that \ref{SMAR} implies \ref{PMMMAR}. Recall that \ref{SMAR} means that for $x, \tilde{x} \in \X$ such that $o(x,m)=o(\tilde{x},m)$, $\Prob(M=m | x) = \Prob(M=m| \tilde{x}) $. First consider, $x \in \X_{\mid m}^{m}$ and $\tilde{x} \in \X$ such that $o(x,m)=o(\tilde{x},m)$. Since, $p(o(x,m) \mid M=m) = p(o(\tilde{x},m) \mid M=m) > 0$,  $x \in \X_{\mid m}^{m}$ implies that $\tilde{x} \in \X_{\mid m}^{m}$. Thus,
\begin{align}\label{proofMARCondtrue2}
    &\Prob(M=m | x) = \Prob(M=m| \tilde{x}) \Longleftrightarrow  \\
    & \frac{p(x  |M =m )\Prob(M =m )}{p(x )} \nonumber\\
    &= \frac{p(\tilde{x}  |M =m )\Prob(M =m )}{{p(\tilde{x} )}} \Longleftrightarrow \nonumber\\
    & \frac{p(o(x ,m ), o^c(x ,m ) \mid M  =m )}{p(o(\tilde{x} ,m ), o^c(\tilde{x} ,m ) \mid M  =m )} \nonumber\\
    &= \frac{p(o(x , m ), o^c(x ,m ))}{p(o(\tilde{x} , m ), o^c(\tilde{x} ,m ))} \Longleftrightarrow \nonumber \\
    & \frac{p(o^c(x ,m ) \mid o(x ,m ), M =m )}{p(o^c(x ,m )\mid o(x ,m ))} \nonumber\\
    &= \frac{p(o^c(\tilde{x} ,m ) \mid o(x ,m ), M =m )}{p(o^c(\tilde{x} ,m )\mid o(x ,m ))}  \Longleftrightarrow\nonumber\\
    & p(o^c(x ,m ) \mid o(x ,m ), M =m ) \nonumber\\
    &= \frac{p(o^c(\tilde{x} ,m ) \mid o(x ,m ), M =m )}{p(o^c(\tilde{x} ,m )\mid o(x ,m ))} p(o^c(x ,m )\mid o(x ,m ))\nonumber
\end{align}
Integrating \eqref{proofMARCondtrue2} with respect to the missing part of $x$, $o^c(x,m)$ only shows that 
\[
\frac{p(o^c(\tilde{x} ,m ) \mid o(x ,m ), M =m )}{p(o^c(\tilde{x} ,m )\mid o(x ,m ))}=1,
\]
and thus also \ref{PMMMAR}. Similarly, starting from \ref{PMMMAR}, the same arguments backwards imply that for $x \in \X_{\mid m}^m$, $\tilde{x} \in \X$ such that $o(x,m)=o(\tilde{x},m)$, $\Prob(M=m | x) = \Prob(M=m| \tilde{x}) $. 

On the other hand, for $x \in  \X \setminus \X_{\mid m}^{m}$, $\tilde{x} \in \X$, with $o(x,m)=o(\tilde{x},m)$, $p(o(x,m) \mid M=m)=p(o(\tilde{x},m) \mid M=m)=0$, so that $p(\tilde{x} \mid M=m)=p(x \mid M=m)=0$ by the assumption at the beginning of this section. Thus
\begin{align*}
    0=p(x \mid M=m)=\Prob(M=m \mid x)\frac{p(x)}{\Prob(M=m)}\\
    0=p(\tilde{x} \mid M=m)=\Prob(M=m \mid \tilde{x})\frac{p(\tilde{x})}{\Prob(M=m)}
\end{align*}
showing that $\Prob(M=m \mid x)=\Prob(M=m \mid \tilde{x})=0$ and proving \eqref{SMAR}.

This shows that \ref{SMAR} and \ref{PMMMAR} are equivalent. \cite{ourresult} show that \ref{SMARII} is also equivalent to \ref{PMMMAR}, which proves the result.  
\end{proof}

\Ordering*

\begin{proof}
   Clearly if \ref{CIMAR} implies \ref{EMAR} and \ref{EMAR} implies \ref{PMMMAR}.

    We already studied examples showing that \ref{PMMMAR} may hold, even if \ref{EMAR} and \ref{CIMAR} do not, for instance Examples \ref{Example1_first} and \ref{interesting_new_Example}. Example \ref{Example6} below gives an example where \ref{EMAR} holds but not \ref{CIMAR}.
\end{proof}

\begin{example}\label{Example6}[\ref{EMAR} is strictly weaker than \ref{CIMAR}]
    Consider
\begin{align}\label{eq:Example3}
\mathcal{M}= \left\{\begin{pmatrix} 0 & 0 & 0 & 0\end{pmatrix}, \begin{pmatrix}1 & 0 & 0 & 0\end{pmatrix}, \begin{pmatrix} 1 & 0 & 1 & 0\end{pmatrix}, \begin{pmatrix} 0 & 1 & 1 & 0\end{pmatrix} \right \},
\end{align}
and $(X_1, X_2, X_3, X_4)$ are independently uniformly distributed on $[0,1]$. We construct the example such that $\Prob(M=m_1 \mid x)$ and $\Prob(M=m_4 \mid x)$ depend on the always observed $x_4$, while $\Prob(M=m_2 \mid x)$ and $\Prob(M=m_3 \mid x)$ depends on $x_2$, which is not observed in pattern $m_4$. This will make the example \ref{EMAR}, but not \ref{CIMAR}. More specifically, let
\begin{align*}
    &\Prob(M=m_1 \mid x)= \frac{x_4}{2}, \ \ \Prob(M=m_4 \mid x)= \frac{1}{2} - \frac{x_4}{2}  \\
    &\Prob(M=m_2 \mid x)= \frac{x_2}{2}, \ \  \Prob(M=m_3 \mid x)=\frac{1}{2} - \frac{x_2}{2}.
\end{align*}
By construction it holds that, with $m_1$ being the fully observed pattern and $m \in \{m_2,m_3, m_4\}$,
\begin{align*}
    &\Prob(M=m_1 \mid o(x,m) )\\
    &= \int \Prob(M=m_1 \mid x_4 ) p(o^c(x,m) \mid o(x,m)) d o^c(x,m)\\
    &=\Prob(M=m_1 \mid x_4 ),
\end{align*}
as $x_4$ is not part of $o^c(x,m)$ for any pattern $m$. Thus,
\begin{align*}
    &p(o^c(x,m) \mid o(x,m), M=m_1 )\\
    &=p(o^c(x,m) \mid o(x,m) ) \frac{\Prob(M=m_1 \mid x )}{\Prob(M=m_1 \mid o(x,m) )} \\
    &=p(o^c(x,m) \mid o(x,m) ),
\end{align*}
and \ref{EMAR} holds. On the other hand,
\begin{align*}
    &\Prob(M=m_2 \mid o(x,m_4))\\
    &= \int \Prob(M=m_2 \mid x_2 ) p(x_2, x_3 \mid x_1,x_4) dx_2 d x_3\\
    &=  \frac{1}{4}. 
\end{align*}
Thus,
\begin{align*}
    &p(o^c(x,m_4) \mid o(x,m_4), M=m_2 )\\
    &= p(o^c(x,m_4) \mid o(x,m_4) ) \frac{\Prob(M=m_2 \mid x )}{\Prob(M=m_2 \mid o(x,m_4) )}\\
    &= 2 x_2 p(o^c(x,m_4) \mid o(x,m_4) )\\
    &\neq p(o^c(x,m_4) \mid o(x,m_4) )\\
    &= p(o^c(x,m_4) \mid o(x,m_4), M=m_4 ),
\end{align*}
showing that the example is not \ref{CIMAR}.
\end{example}

\CIMARequivRMAR*

\begin{proof}

First, we notice that under \ref{CIMAR} for all $m, m' \in \mathcal{M}$, $x \in \X_{\mid m'}^{m}$,
\begin{align*}
    &\pX(o^c(x,m) \mid o(x,m)) \frac{\Prob(M=m' \mid x)}{\Prob(M=m' \mid o(x,m))}\\
    &=p(o^c(x,m) \mid o(x,m), M=m')\\
    &=\pX(o^c(x,m) \mid o(x,m)).
\end{align*}
Moreover, if $x \in \X \setminus \X_{\mid m}^{m'}$, we have
\[
0=p(o(x,m) \mid M=m') \implies p(x \mid M=m')=0,
\]
so that 
\begin{align*}
    P(M=m' \mid x)&=p(x \mid M=m') \frac{\Prob(M=m')}{\pX(x)}\\
    &=0\\
    &=P(M=m' \mid o(x,m)).
\end{align*}
Thus \ref{CIMAR} implies that for all $m, m' \in \mathcal{M}$, and all $x \in \X$, $\Prob(M=m' \mid x)=\Prob(M=m' \mid o(x,m))$. Consequently if a variable $X_j^*$ is observed in pattern $m'$, but not in pattern $m$, \ref{CIMAR} implies that $\Prob(M=m' \mid x)$ is not allowed to depend on $x_j$. Thus, $\Prob(M=m' \mid x)$ can only depend on variables that are observed in all patterns $m$, i.e. subsets of $X_{O}^*$. This shows that \ref{CIMAR} implies \ref{RMAR}.

Similarly since under \ref{RMAR}, $x_{O}$ is part of $o(x,m)$ for all $m$,
\begin{align*}
    &\Prob(M=m' \mid o(x,m))\\
    &=\int \Prob(M=m' \mid x) p(o^c(x,m) \mid o(x,m)) do^c(x,m)\\
    &=\Prob(M=m' \mid x_{O}) \int  p(o^c(x,m) \mid o(x,m)) do^c(x,m)\\
    &=\Prob(M=m' \mid x_{O}).
\end{align*}
Thus, for all $x \in \X_{\mid m'}^{m}$,
\begin{align*}
    &p(o^c(x,m) \mid o(x,m), M=m')\\
    &=p(o^c(x,m) \mid o(x,m))\frac{\Prob(M=m' \mid x)}{\Prob(M=m' \mid o(x,m))} \\
    &=p(o^c(x,m) \mid o(x,m))\\
    &=p(o^c(x,m) \mid o(x,m))\frac{\Prob(M=m \mid x)}{\Prob(M=m \mid o(x,m))}\\
    &=p(o^c(x,m) \mid o(x,m), M=m),
\end{align*}
and \ref{CIMAR} holds.

For \ref{EMAR}, taking $m \in \mathcal{M}$ and $m'=0$, the same arguments show that $\Prob(M=0\mid x)=\Prob(M=0\mid x_O)$ for all $x \in \X$.

If $O$ is empty, then under \ref{RMAR}, $\Prob(M=m \mid x)=\Prob(M=m \mid x_{O})=\Prob(M=m)$ for all $m$, proving that \ref{CIMAR} simplify to \ref{MCARform}. Similarly, for \ref{EMAR}, $\Prob(M=0 \mid x)=\Prob(M=m)$.

For nonempty $O$, Example \ref{Example2} in the main text shows an example that is \ref{CIMAR} but not \ref{MCARform}, showing that, in this case, \ref{CIMAR} is strictly weaker than \ref{MCARform}.

\end{proof}

\propidentifiabilitymar*

\begin{proof} 
By definition of \ref{CIMAR} and \ref{EMAR} it directly follows that for all $x \in \X_{\mid m'}^{m}$,
\begin{align*}
p(o^c(x,m) \mid o(x,m), M=m') = p(o^c(x,m) \mid o(x,m)),
\end{align*}
for all $m' \in \mathcal{M}$ and, for all $x \in \X_{\mid 0}^{m}$,
\begin{align*}
p(o^c(x,m) \mid o(x,m), M=0) = p(o^c(x,m) \mid o(x,m)),
\end{align*}
respectively. Example \ref{Example1} shows that under \ref{PMMMAR}, 
\begin{align*}
p(o^c(x,m) \mid o(x,m), M=m') = p(o^c(x,m) \mid o(x,m)),
\end{align*}
might not hold for any $m' \neq m$. Finally Example \ref{Example5} shows that there exists a MAR situation, where for some $m \in \mathcal{M}$,
\begin{align*}
    &h^*(o^c(x,m) \mid o(x,m)) \\
    &= \sum_{m' \in L_m} w_{m'}(o(x,m)) p(o^c(x,m) \mid o(x,m), M=m'),
\end{align*}
is not equal to $p(o^c(x,m) \mid o(x,m))$ for any set of weights $w_{m'}(o(x,m))$.
\end{proof}

\begin{example}\label{Example5}
    
Consider
\begin{align}\label{eq:Example5}
\mathcal{M}= \left\{\begin{pmatrix} 0 & 0 & 0 & 0\end{pmatrix}, \begin{pmatrix} 0 & 0 & 1 & 0\end{pmatrix}, \begin{pmatrix} 0 & 1 & 0 & 0\end{pmatrix}, \begin{pmatrix}1 & 1 & 0 & 0\end{pmatrix}  \right \},
\end{align}
and $(X_1, X_2, X_3, X_4)$ are independently uniformly distributed on $[0,1]$. We further specify that 
\begin{align*}
    \Prob(M=m_1 \mid x) &= \Prob(M=m_1 \mid x_1,x_2) = (x_1+x_2)/8\\
    \Prob(M=m_2 \mid x) &= \Prob(M=m_2 \mid x_2) = 1/4-x_2/8\\
        \Prob(M=m_3 \mid x) &= \Prob(M=m_3 \mid x_1) = 1/4-x_1/8\\
    \Prob(M=m_4 \mid x) &= \Prob(M=m_4) = 2/4.
\end{align*}
Again, $\Prob(M=m \mid x)= \Prob(M=m \mid o(x,m))$ and thus the MAR condition \ref{SMAR} holds. Consider now $m=m_4$, such that $o^c(x,m)=(x_1,x_2)$. Then it holds that
\begin{align*}
    &p(o^c(x,m) \mid o(x,m), M=m_1)\\
    &=\frac{1}{2}\left(x_1 + x_2\right) p(x_1,x_2 \mid x_3, x_4)\\
    &p(o^c(x,m) \mid o(x,m), M=m_2)\\
    &=(2-x_2) p(x_1,x_2 \mid x_3, x_4)
\end{align*}
Consider the mixture as in \eqref{h0star},
\begin{align*}
   & h^*(o^c(x,m) \mid o(x,m)) \\
    &= \sum_{m' \in L_m} w_{m'}(o(x,m)) p(o^c(x,m) \mid o(x,m), M=m'),
\end{align*}
with $\sum_{m' \in L_m} w_{m'}(o(x,m))=1$. Then for $ h^*(o^c(x,m) \mid o(x,m))= p(o^c(x,m) \mid o(x,m))$ to hold, it must hold that for all $x_1, x_2$,
\begin{align*}
    w_{m_1}(x_3,x_4) \frac{1}{2}\left(x_1 + x_2\right) +  w_{m_2}(x_3,x_4) (2-x_2)=1.
\end{align*}
It is impossible to find a set of weights that meet this condition for all $x_1, x_2$ simultaneously. 
\end{example}

\identificationprop*

\begin{proof}
Recall that 
\begin{align} 
        L_{j}=\{m \in \mathcal{M}: m_j=0  \}.
    \end{align}
As all previous variables have been imputed and $x_j$ is observed, it is thus possible to identify the full distribution $p(x \mid M=m)$ for all $m \in L_j$. Thus, we learn the mixture of joint distributions
\begin{align*}
  p(x_j, x_{-j} \mid M \in L_j)&=\frac{1}{C} \sum_{m \in L_j} \Prob(M=m) \cdot p(x \mid M=m )\\
  &=\frac{1}{C} \sum_{m \in L_j} \Prob(M=m \mid x) \cdot p( x),
\end{align*}
where $C$ is a constant such that $p(x_j, x_{-j}\mid M \in L_j)$ integrates to 1. Integrating $p(x_{j}, x_{-j}\mid M \in L_j)$ over $x_{j}$, we obtain similarly
\begin{align*}
     p( x_{-j}\mid M \in L_j) = \frac{1}{C} \sum_{m \in L_j} \Prob(M=m \mid  x_{-j}) \cdot p(   x_{-j})
\end{align*}
Thus for $x_{-j}$ such that $p(x_{-j} \mid M_j=0)=p( x_{-j}\mid M \in L_j)>0$:
\begin{align*}
    h^*(x_j \mid x_{-j}) &= \frac{p(x_j, x_{-j}\mid M \in L_j)}{p( x_{-j}\mid M \in L_j)}\\
    &=\frac{\sum_{m \in L_j} \Prob(M=m \mid x) \cdot p( x)}{\sum_{m \in L_j} \Prob(M=m \mid  x_{-j}) \cdot p(  x_{-j})}\\
    &=p( x_j \mid  x_{-j}) \frac{\sum_{m \in L_j} \Prob(M=m \mid x)}{\sum_{m \in L_j} \Prob(M=m \mid  x_{-j} ) }.
\end{align*}
It only remains to show that
\begin{align}\label{whatwewant0}
    \frac{\sum_{m \in L_j} \Prob(M=m \mid x)}{\sum_{m \in L_j} \Prob(M=m \mid x_{-j}) }=1.
\end{align}
Indeed, we note that for any $m \in L_j^c$,
\begin{align*}
    \Prob(M=m \mid x)=\Prob(M=m \mid x_{-j}),
\end{align*}
by \ref{SMARII}. Consequently,
\begin{align*}
    1= \sum_{m \in L_j} \Prob(M=m \mid x) + \sum_{m \in L_j^c} \Prob(M=m \mid  x_{-j}),
\end{align*}
so that
\begin{align*}
    \sum_{m \in L_j} \Prob(M=m \mid x) &= 1- \sum_{m \in L_j^c} \Prob(M=m \mid x_{-j})\\
    &= \sum_{m \in L_j} \Prob(M=m \mid  x_{-j}),
\end{align*}
and thus \eqref{whatwewant0} indeed holds.
\end{proof}

\overlaplemma*

\begin{proof}

First, $p(x_{-j} \mid M_j=1) > 0$ implies $p(x_{-j}) \geq p(x_{-j} \mid M_j=1) \Prob(M_j=1) > 0$. Moreover,
\begin{align*}
p(x_{-j} \mid M_j=0)&=\Prob(M_j=0 \mid x_{-j}) \frac{p(x_{-j})}{\Prob(M_j=0)} \\
&=\sum_{m \in L_j} \Prob(M=m \mid x_{-j}) \frac{p(x_{-j})}{\Prob(M_j=0)}.
\end{align*}
Thus if \eqref{Sufficientforoverlap} holds,
\begin{align*}
    p(x_{-j} \mid M_j=1) > 0 &\implies p(x_{-j}) \\
    &\implies p(x_{-j} \mid M_j=0) > 0.
\end{align*}
    
\end{proof}

\EMARprop*

\begin{proof}
    As shown in Lemma \ref{CIMARequivRMAR}, \ref{EMAR} is equivalent to $\Prob(M=0 \mid x)=\Prob(M=0 \mid x_O)$. Thus, $\PX$ is recoverable under \ref{EMAR}. We show that for all $m$, $\Prob(M=m\mid x)$ is identifiable. Indeed,
\begin{align*}
    \Prob(M=m\mid x)=p(x \mid M=m) \frac{\Prob(M=m)}{\pX(x)}.
\end{align*}
We already know that $\pX(x)$ is identifiable from above and so is $\Prob(M=m)$. Moreover,
\begin{align*}
    &p(x \mid M=m)\\
    &=p(o^c(x,m) \mid o(x,m), M=m) p(o(x,m) \mid M=m)\\
    &=p(o^c(x,m) \mid o(x,m), M=0) p(o(x,m) \mid M=m)
\end{align*}
Now, $p(o^c(x,m) \mid o(x,m), M=0)$ is identified because it is in the observed pattern, while $p(o(x,m) \mid M=m)$ is observed in pattern $m$ and thus identified.

Since $P(M=m\mid x)$ is identified for all $m$, $p(x,m)$ is identified through
\begin{align*}
    p(x,m)=\frac{p(x,M=0)}{\Prob(M=0 \mid x)} \Prob(M=m \mid x),
\end{align*}
\end{proof}

\subsection{Ignorability in Maximum Likelihood Estimation}\label{Sec_likelihoodignorarbility}

\subsubsection{The traditional view on MLE}

In the context of MLE, it has long been established \citep{Rubin_Inferenceandmissing} that the missing mechanism is ignorable under MAR and an additional condition. To formalize this assume $p$ is parametrized by a vector $\theta$. Moreover, assume the conditional distribution of $M \mid x$ is parametrized by $\phi$. Then we can rewrite the MAR definition in \ref{SMAR} slightly, as in \cite{Rubin_Inferenceandmissing, whatismeant3}:  
 \begin{align}\label{FinaltrueMAR}
&\Prob_{\phi}(M=m | x) = \Prob_{\phi}(M=m| \tilde{x}) \text{ for all } m \in \mathcal{M}  \\
&\text{ and } x, \tilde{x} \text{ such that } o(x,m)=o(\tilde{x},m) \nonumber.
\end{align}
As so far, $\phi$ and $\theta$ are not restricted to be finite-dimensional, this could in principle be assumed without loss of generality, such that \eqref{FinaltrueMAR} is indeed the same as condition \ref{SMAR}. In the following, we will assume that $\theta$ is finite-dimensional. Let $\Omega_{\theta}$ be the space of $\theta$, $\Omega_{\phi}$ the space of possible $\phi$ and $\Omega_{\theta, \phi}$ the joint space of the parameters. The crucial additional condition is that:
\begin{align}\label{crucialadditionalcond}
  \Omega_{\theta, \phi}=\Omega_{\theta} \times \Omega_{\phi}.  
\end{align}
This just means that $\phi$ is distinct from $\theta$, so that $\Prob_{\phi}(M=m | x)$ does not depend on $\theta$ \citep{Rubin_Inferenceandmissing, whatismeant, whatismeant3}. In this case, we can rederive the classical ignorability result for MAR in a likelihood context: Consider the likelihood for a pattern $m$,
\begin{align*}
    \mathcal{L}(\theta; o(x,m))&=p_{\theta, \phi}(o(x,m) , M=m)\\
    &= \int p_{\theta, \phi}(x , M=m) do^c(x,m).
\end{align*}
That is, $\mathcal{L}(\theta; o(x,m))$ is the joint density of the observed values with respect to pattern $m$, and $M=m$, seen as a function of $\theta$. Under \eqref{FinaltrueMAR} and \eqref{crucialadditionalcond} it can be checked that
\begin{align*}
    &\int p_{\theta, \phi}(x , M=m) do^c(x,m) \\ 
    &= \Prob_{\phi}(M=m \mid o(x,m)) p_{\theta}^*(o(x,m))\\
    &= c(o(x,m))  p_{\theta}(o(x,m)),
\end{align*}
with $c(o(x,m))$ not depending on $\theta$. Consequently, it is possible to ignore the missingness mechanism (and potential distribution shifts) in a likelihood setting due to (a) the assumption of distinct parameters $\theta, \phi$ \eqref{crucialadditionalcond} and (b) the nature of MLE. In particular, even though the distribution $p_{\theta, \phi}^*(o(x,m), M=m)$ is not the same as the $p_{\theta}^*(o(x,m))$, it is \emph{essentially} the same from an MLE perspective: We can therefore simply maximize $p_{\theta}^*(o(x,m))$ over $\theta$ to get the MLE.

\subsubsection{Modern Results on Consistency}
In a somewhat overlooked paper, \cite{MLEConsistencyunderMissing} showed that Condition \eqref{crucialadditionalcond} is actually necessary only for efficiency reasons, but that even without it, the MLE is $\sqrt{n}$-consistent. The key argument for this in our context is showing that $\E[\log(p_{\theta}(o(X,M)))]$ is minimized at $\theta^*$, where expectation is taken over $p_{\theta, \phi}(x,m)=\Prob_{\phi}(M=m \mid x) p_{\theta}(x)$. As can be seen in \cite[page 3180]{MLEConsistencyunderMissing}, this argument crucially relies on the log and the fact that $\log(x) \leq x-1$.\footnote{The argument is not reproduced here due to space constraints.}
We note that this is thus deeply related to the fact that the MLE implicitly minimizes the KL-Divergence and these properties do not hold for other loss functions. Indeed, as discussed in \cite{Mestimatormissingvalues} and in Section \ref{Sec_Empirical}, M-Estimation besides MLE is in general not consistent under MAR. This underscores the messages of this paper that as soon as one deviates from the MLE framework, accounting for MAR is not trivial in general.

\end{appendix}


\begin{funding}
This work is part of the DIGPHAT project which was supported by a grant from the French government, managed by the National Research Agency (ANR), under the France 2030 program, with reference ANR-22-PESN-0017.
\end{funding}



\bibliographystyle{imsart-nameyear} 
\bibliography{biblio}       


\end{document}